\title{Deep forward-backward dynamic programming schemes for high-dimensional semilinear nonlocal PDEs and FBSDE with jumps\thanks{This work was supported by grants from the National Natural Science
Foundation of China (Grant Nos. 12271367, 92470119).}}
\author{Wansheng Wang\thanks{Corresponding author. Department of Mathematics, Shanghai Normal University, Shanghai, 200234, China ({\tt w.s.wang@163.com}).}
\and Jiangtao Pan\thanks{Department of Mathematics, Shanghai Normal University, Shanghai, 200234, China.}
\and Jie Wang\thanks{Department of Mathematics, Shanghai Normal University, Shanghai, 200234, China.}
\and Zaijun Ye\thanks{Department of Mathematics, Shanghai Normal University, Shanghai, 200234, China.}
}
\begin{document}

\maketitle

\begin{abstract}
We propose a new deep learning algorithm for solving high-dimensional parabolic integro- differential equations (PIDEs) and forward-backward stochastic differential equations with jumps (FBSDEJs). This novel algorithm can be viewed as an extension and generalization of the DBDP2 scheme and a dynamic programming version of the forward-backward algorithm proposed recently for high-dimensional {semilinear PDEs and semilinear PIDEs}, respectively. Different from the DBDP2 scheme for  {semilinear PDEs}, our algorithm approximate simultaneously the solution and the integral kernel by deep neural networks, while the gradient of the solution is approximated by numerical differential techniques. The related error estimates for the integral kernel approximation play key roles in deriving error estimates for the novel algorithm. Numerical experiments confirm our theoretical results and verify the effectiveness of the proposed methods.
\end{abstract}

\begin{keywords}
parabolic integro- differential equations, forward-backward stochastic differential equations with jumps, deep learning, DFBDP scheme, error estimates
\end{keywords}

{\noindent \textbf{MSC codes.} 60H35, 65C20, 65M15, 65C30, 60H10, 65M75}

\pagestyle{myheadings} \thispagestyle{plain} \markboth{W. S. WANG, J. T. PAN, J. Wang, Z. J. Ye}{DFBDP schemes for PIDEs and FBSDEJs}


\section{Introduction}  In this paper we propose a new deep learning algorithm for solving high-dimensional partial integro-differential equations (PIDEs)
\begin{eqnarray}\label{eq1.1}
\left\{
\begin{aligned}
\partial_{t}u+\mathcal L u+f\left(t,x,u,\sigma^\mathsf{T}(t,x)\nabla_{x}u,B[u]\right) &=0, \quad \;(t,x) \in [0,T)\times \mathbb R^d,\\
u(T,x)&=g, \quad \;x\in \mathbb R^d,
\end{aligned}
\right.
\end{eqnarray}
where $d\ge 1$ and $T>0$, $g$: $\mathbb R^d \rightarrow \mathbb R$ is terminal condition, the second-order nonlocal operator $\mathcal L$ is defined as follows:
\begin{eqnarray*}
\begin{aligned}
\mathcal Lu=&\frac{1}{2}{\rm Tr}\left(\sigma\sigma^\mathsf{T}(t,x)\partial_{x}^{2}u\right)+\left\langle b(t,x),\nabla_{x} u\right\rangle\\
&+\int_{E}(u(t,x+\beta(t,x,e))-u(t,x)-\left\langle\nabla_{x} u,\beta(t,x,e)\right \rangle)\lambda(\mathrm{d}e),
\end{aligned}
\end{eqnarray*}
and $B$ is an integral operator
\begin{eqnarray*}
B[u]=\int_{E}(u(t,x+\beta(t,x,e))-u(t,x))\gamma(e)\lambda(\mathrm{d}e).
\end{eqnarray*}
Here $b(t,x) $: $ [0,T] \times \mathbb R^d  \to \mathbb R^d$, $\sigma(t,x)$: $[0,T] \times \mathbb R^d \to \mathbb R^{d \times d} $, $ \beta(t,x,e) $: $[0,T] \times \mathbb R^d \times E \to \mathbb R^d $, and $f$: $[0,T] \times \mathbb R^d \times \mathbb R \times \mathbb R^{d} \times \mathbb R \rightarrow \mathbb R$ are deterministic and Lipschitz continuous functions of linear growth which are additionally supposed to satisfy some monotonicity conditions, $A^{\mathsf{T}}$ denotes the transpose of a vector or matrix $A$, $ E \triangleq \mathbb R^{d} \backslash \{0\} $ is equipped with its Borel field $\mathcal E$ for some measurable functions $\gamma:  E\rightarrow \mathbb R $ satisfying
\begin{eqnarray}\label{eq1.2}
\sup\limits_{e \in E}|\gamma(e)| \leq K_{\gamma},
\end{eqnarray}
and $\lambda(\mathrm{d}e)$ is assumed to be a $\sigma $-finite measure on $ (E,\mathcal E ) $ satisfying
\begin{eqnarray*}
\int_{E} (1 \land |e|^2) \lambda(\mathrm{d}e) < \infty.
\end{eqnarray*}
It is well-known that the solution to PIDEs (\ref{eq1.1}) can be represented by high-dimensional forward-backward stochastic differential equations with jumps (FBSDEJs), because of the generalized nonlinear Feynman-Kac formula \cite{GRE}. As two ubiquitous mathematical models, PIDEs and FBSDEJs have been widely used in various practical science and engineering applications, for example, anomalous diffusion \cite{RJ}, contaminant migration \cite{JWGLL} in groundwater and plasma physics, random control problems \cite{JZ,SX}, option pricing \cite{NSM}, risk measure \cite{S,MQA}, portfolio hedge \cite{AE}, and index utility maximization \cite{DB} in finance. Jump behavior in many financial problems are also included in the FBSDEJs, such as buyer (seller) default, operation failure, insurance events \cite{AE,LD,EN}, and so on.

Due to the complex solution structure, however, explicit solutions of PIDEs and FBSDEJs can seldom be found. Consequently, one usually resorts to numerical methods to solve the two kinds of equations, and a volume of work has been performed on their numerical solutions.
Forward-backward stochastic differential equation (FBSDEs) without jump term has been fully studied \cite{EN,CBJ,BN,CJ1,DK,YS,JZDJJ,JZZ,WLS,WWL} by different numerical methods, include four-step method \cite{JPJ}, Runge-Kutta methods (see, e.g., \cite{JFD}), high-order multi-step methods (see, e.g., \cite{WYT}). However, there are few numerical methods for FBSDEJs. Picard iteration, Multistep and prediction-correction schemes have been used to solve low-dimensional FBSDEJs (see, for example, \cite{AES,BR,WFT,WZG,YJW}). In 1997, Barles, Buckdahn, and Pardoux \cite{GRE} connected the FBSDEJs to a class of nonlinear PIDEs through the nonlinear Feynman-Kac theory. Therefore, FBSDEJs becomes a powerful probabilistic technique that is used to study the numerical solutions of PIDEs, and usually the numerical algorithms for the design of one class of equations in these two classes of equations can solve another class of equations. To solve low-dimensional PIDEs, IMEX time discretizations combined with finite difference method, finite element method, or spectral method, have been studied in the literature (see, for example, \cite{Achdou05,Pindza14,Kadalbajoo17,Wang19,WMZ2021,Mao22}).

As the dimensionality increases, traditional grid-based numerical methods are no longer applicable for higher-dimensional problems, where their computational complexity grows exponentially and leads to the so-called ``curse of dimensionality" \cite{RB}. Therefore, the solution of high-dimensional problems has always been a challenge for scientists. Recently, based on the Feyman-Kac representation of the PDEs, branch diffusion process method  and Monte Carlo method  have been studied; see, for example, \cite{Gobet05,HL2012,PNT2016,Warina18}.

Over the past few years, machine learning and deep learning algorithms have attracted more and more scholars' attention, since these approximation methods provide a new idea for numerical approximation of high-dimensional functions and overcome the problem of ``curse of dimensionality" (see, for example, \cite{Chan19,Jentzen23}). With multilevel techniques and automatic differentiation, multi-layer Picard iterative methods have been developed for handling some high-dimensional PDEs with nonlinearity (see, for example, \cite{WMAT19,MATTP18,Hutzenthler20}). Using machine learning representation of the solution, the so-called Deep Galerkin method has proposed to solve PDEs on a finite domain in \cite{JK2018}. On basis of the backward stochastic differential equation (BSDE) approach first developed in \cite{PP1990}, by viewing the BSDE as a stochastic control problem with the gradient of the solution being the policy function, a neutral network method was proposed to solve high-dimensional PDEs in the pioneering papers \cite{WEM18,WJA}. {Given the universal approximation capability of neural networks, a posteriori error estimation of the solution is provided and it is proved that the error converges to zero for the deep BSDE method (see \cite{HL2020})}. A new deep learning algorithm was proposed to solve fully nonlinear PDEs and nonlinear second-order backward stochastic differential equations (2BSDE) by exploiting a connection between PDEs and 2BSDEs \cite{Beck19}. For linear and semilinear PIDEs, the deep learning approximations of the numerical solution were proposed in \cite{Gonon21a,Gonon21b} and \cite{Castro21}. {An extension of the deep solver of \cite{WEM18} to the case of FBSDEJs and corresponding PIDEs was proposed in \cite{AGPP25}, and the error arising in the numerical approximation of FBSDEs and corresponding PIDEs by means of a deep learning-based method was studied in \cite{GOP25}}. {Moreover, a Deep BSDE scheme for nonlinear integro-PDEs with unbounded nonlocal operators was introduced and analyzed in \cite{JM24}}. {Quite recently, a deep learning algorithm for solving high-dimensional PIDEs and high-dimensional FBSDEJs was proposed in \cite{Wang2025}, and the error estimates for this deep learning algorithm were derived in this work.} {Various machine learning solvers for coupled forward-backward systems of stochastic differential equations (FBSDEs) driven by a Brownian motion and a Poisson random measure were investigated in \cite{ABDW24}}. It is worth noting that deep learning backward dynamic programming (DBDP) methods, including DBDP1 scheme and DBDP2 scheme, in which some machine learning techniques are used to estimate simultaneously the solution and its gradient by minimizing a loss function on each time step, were proposed in \cite{CHX}. The DBDP1 algorithm has then been extended to the case of semilinear parabolic nonlocal integro-differential equations in \cite{Castro21}. In this paper, we apply the DBDP2 algorithm to solve semilinear PIDEs and decoupled FBSDEJs and carry out the corresponding convergence analysis for this deep forward-backward dynamic programming (DFBDP) scheme. The main difference between the two algorithms is in the gradient estimation of the solution: in the DBDP1 algorithm it is directly estimated by the neural network and in the DBDP2 scheme it is estimated by using numerical differential techniques.

The outline of the paper is organized as follows. In Section 2, we start by introducing the deep learning-based DFBDP algorithm for FBSDEJs and related PIDEs. Section 3 is devoted to the error estimates for the DFBDP scheme. Before we do this, some assumptions on the data are also given in this section. Two numerical examples are presented in Section 4 and verify our theoretical results. In Section 5 we finally conclude with some remarks.


\section{Deep learning-based schemes for semilinear PIDEs and decoupled FBSDEJs} In this section, we introduce the details about deep learning-based schemes for solving decoupled FBSDEJs and the associated semilinear PIDEs.

\subsection{Semilinear PIDEs and FBSDEJs} Let $|\cdot|$ denote the Euclidean norm in the Euclidean space, and $C^{l,k}$ denote the set of functions $f(t,x)$ with continuous partial derivatives up to $l$ with respect to $t$ and up to $k$ with respect to $x$. Let $(\Omega,\mathcal{F}, \mathbb F,P)$, $\mathbb F=(\mathcal{F}_{t})_{0\leq t<T}$, be a stochastic basis such that $ \mathcal{F}_{0} $ contain all zero $P$-measure sets, and $ \mathcal{F}_{t^+} \triangleq \bigcap_{\epsilon > 0}\mathcal{F}_{t+\epsilon}=\mathcal{F}_{t} $. The filtration $\mathbb F$ is generated by a $d$-dimension Brownian motion (BM) $ \{ W_{t} \} _{0\leq t<T} $ and a Poisson random measure $\mu$  on $\mathbb R_+\times E$, independent of $W$.

Let $ u(t,x) \in C^{1,2}([0,T] \times \mathbb R^d) $ be the unique viscosity solution of (\ref{eq1.1}). Then by the It\^o formula, one can show that the solution $u$ admits a probabilistic representation, i.e., we have (see \cite{RJ}), 
\begin{eqnarray}\label{eq2.1}
u(t,\mathcal{X}_{t})=Y_{t}.
\end{eqnarray}
Furthermore, the quadruplet $(\mathcal{X}_{t},Y_{t},Z_{t},\Gamma_{t}) $ with following relationship
\begin{eqnarray}\label{eq2.2}
\left\{
\begin{aligned}
&Z_{t}=\sigma^T(t,\mathcal{X}_{t})\nabla_{x}u(t,\mathcal{X}_{t}),\\
&U_{t}=u\left(t,\mathcal{X}_{t^{-}}+\beta(t,\mathcal{X}_{t^{-}},e)\right)-u(t,\mathcal{X}_{t^{-}}),\\
&\Gamma_t=B[u(t,\mathcal{X}_t)],
\end{aligned}
\right.
\end{eqnarray}
 is the solution of the FBSDEJs
 \begin{eqnarray}\label{eq2.3}
 \left\{
\begin{aligned}
 \mathcal{X}_t &=\xi+\int_{0}^{t}b(s,\mathcal{X}_{s})\mathrm{d}s+\int_{0}^{t}\sigma(s,\mathcal{X}_{s})\mathrm{d}W_{s}+\int_{0}^{t}\int_{E}\beta(s,\mathcal{X}_{s^-},e)\widetilde{\mu}(\mathrm{d}e,\mathrm{d}s),\\
 Y_{t}&=g(\mathcal{X}_{T})+\int_{t}^{T}f(s,\mathcal{X}_s,Y_{s},Z_{s},\Gamma_{s})\mathrm{d}s-\int_{t}^{T}Z_{s}^{T}\mathrm{d}W_{s}-\int_{t}^{T}\int_{E}U_{s}(e)\widetilde{\mu}(\mathrm{d}e,\mathrm{d}s),
 \end{aligned}
 \right.
  \end{eqnarray}
where compensator $\tilde\mu(\mathrm{d}e,\mathrm{d}t)=\nu(\mathrm{d}e,\mathrm{d}t)-\lambda(\mathrm{d}e) \mathrm{d}t$, $B[u]$ is an integral operator. The quadruplet $(\mathcal{X}_{t},Y_{t},Z_{t},\Gamma_{t}) $ are called the ``forward part'', the ``backward part'', the ``control part'' and the ``jump part'', respectively. The presence of the control part $Z_t$ is crucial to find a nonanticipative solution.

{By nonlinear Feynman-Kac formulas, the solution to the PIDE (\ref{eq1.1}) can be represented as a conditional expectation:
\begin{equation}\label{eq2.3b}
u(t,x)=\mathbb{E}\left[g(\mathcal{X}_{T})+\int_{t}^{T}f(s,\mathcal{X}_{s},Y_{s},Z_{s},\Gamma_{s})\mathrm{d}s\bigg|\mathcal{F}_{t}\right].
\end{equation}}
The above formulas (\ref{eq2.3b}) are the nonlinear Feynman-Kac formulas, and such formulas indicate an interesting relationship between solutions of FBSDEJs and PIDEs. Using nonlinear Feynman-Kac formulas (\ref{eq2.3b}) and the decoupled FBSDEJs, the deep learning algorithm DBDP1 proposed in \cite{CHX} has been extended to the semilinear PIDEs in \cite{Castro21}. Note that the FBSDEJs (\ref{eq2.3}) is decoupled since the coefficients $b$, $\sigma$, and $\beta$ are independent of $Y_t$ and can be solved in sequence. When the coefficients $b$, $\sigma$, and $\beta$ depend on $Y_t$, the FBSDEJs is called coupled and its deep learning-based approximation has been investigated in \cite{Wang2025}.

\subsection{Deep learning (DL) and function approximation} In this subsection, we give a brief introduction about Deep learning (DL). After the authors of \cite{Warreb43,Rosenblatt58} introduced the concept of a neural networks (NN) in 1943 and 1958, respectively, the use of NNs  as a way to approximate functions started to gain importance for its well performance in applications. A rigorous theoretical justification of the approximation property that under suitable conditions on the approximated functions, NNs have a very good performance measured in some mathematical terms, further promotes their application in a large variety of disciplines in science (see, e.g., \cite{Wang17,Md19}). In 2006, the Hinton lab solved the training problem for Deep neural networks (DNNs) \cite{Hinton061,Hinton062}. Since then, a variety of DL algorithms are increasingly emerging. Due to their composition, many layers of DNNs are more capable of representing highly varying nonlinear functions compared to shallow learning approaches. Moreover, DNNs are more efficient for learning because of the combination of feature extraction and classification layers \cite{Md19}. Deep feedforward neutral network is the simplest neural network, but it is sufficient for most PDE problems. Since it is a class of universal neural network, we consider Deep feedforward neural network in this paper.

 Let $m_{\ell}(\ell=0,\ldots,L)$ be the number of neurous in the $\ell-$th layers, $L$ is layer of neural network. The first layer is the input layer, the last layer is the output layer, and another layers are the hidden layers. A feedforward neural network can be defined as the composition
 \begin{eqnarray}\label{eq2.4}
 x \in \mathbb R^{d}\to \mathcal{N}_{L}   \circ \mathcal{N}_{L-1}\circ... \circ\mathcal{N}_{1}(x)\in \mathbb R^{d_{1}},
 \end{eqnarray}
where $d$ is the dimension of $x$ and the output dimension $ d_{1}=k, k \in \mathbb R_+$. We fix $d_{1}=1$ in this paper, and
\begin{eqnarray}\label{eq2.5}
\left\{
\begin{aligned}
&\mathcal{N}_{0}(x)=x\in \mathbb R^{d},\\
&\mathcal{N}_{\ell}(x)=\varrho(\mathbf{w}_{\ell}\mathcal{N}_{\ell-1}(x)+\mathbf{b}_{\ell})\in \mathbb R^{m_{\ell}}, \quad {\hbox{for}} \quad 1\leq \ell \leq L-1,\\
&\mathcal{N}_{L}(x)=\mathbf{w}_{L}\mathcal{N}_{L-1}(x)+\mathbf{b}_L \in \mathbb R,
\end{aligned}
\right.
\end{eqnarray}
where $ \mathbf{w}_{\ell}\in \mathbb R^{m_{\ell}\times m_{\ell-1}}$ and $\mathbf{b}_{\ell} \in \mathbb R^{m_{\ell}}$ denote the weight matrix and bias vector, respectively, $\varrho$ is a nonlinear activation function such as the logistic sigmoid function, the hyperbolic tangent ($\tanh$) function, the rectified linear unit (relu) function and other similar functions. We use the (tanh) function for the hidden layer in this paper. The final layers $\mathcal{N}_{L}(x) $ is typically linear.

 Let $\theta$ denote the parameters of the neural network:
 \begin{eqnarray*}
 	\theta:=\{\mathbf{w}^{\ell}, \mathbf{b}^{\ell}\}, \qquad{\ell=1,...,L}.
 \end{eqnarray*}
 The DNN is trained by optimizing over the parameters $\theta$ by (\ref{eq2.4}).

 Considering that our work is to apply the DBDP2 algorithm to solve semilinear PIDEs and related FBSDEJs, we should consider neural networks with one hidden layer, $m$ neurons with total variation smaller than $\gamma_m$, a $C^3$ activation function $\varrho$ with linear growth condition and bounded derivative, e.g., a sigmoid activation function, or a tanh function (see \cite{CHX}). This class of neural networks can be represented by the parametric set of functions
 \begin{equation}
 \begin{aligned}
 \mathcal{N}\mathcal{N}^{\varrho}_{d,1,2,m}(\Theta^{\gamma}_{m}):=\left\{x\in\mathbb{R}^d\mapsto\mathcal{U}(x;\theta)=\sum_{k=1}^{m}c_{k}\varrho(a_{k}x+b_{k})+b_{0},\theta=(a_{k},b_{k},c_{k},b_{0})^m_{k=1}\in\Theta^{\gamma}_{m}\right\},\nonumber
 \end{aligned}
 \end{equation}
 with
  \begin{equation}
 \begin{aligned}
 \Theta^{\gamma}_{m}:=\left\{\theta=(a_{k},b_{k},c_{k},b_{0})^m_{k=1}:\mathop{max}_{k=1,...m}|a_{k}|\leq\gamma_{m},\sum_{k=1}^{m}|c_{k}|\leq\gamma_{m}\right\},\nonumber
 \end{aligned}
 \end{equation}
for some sequence $\{\gamma_{m}\}_{m}$ convering to $\infty$, as $m$ goes to infinity, and such that (see \cite{CHX})
\begin{equation}\label{eq2.6}
\begin{aligned}
\lim_{m,N \to \infty}\frac{\gamma_{m}^{6}}{N}=0.
\end{aligned}
\end{equation}
{It should be emphasized that the sequence $\{ \gamma_m \}_{m}$ depends on the number of neurons $m$ in the hidden layer of the neural network $\mathcal{U}\in \mathcal{N}\mathcal{N}^{\varrho}_{d,1,2,m}(\Theta^{\gamma}_{m})$, is used to constrain on the maximum norm of the input weights and the $L^{1}-$ norm of the output weights, and $N$ is the number of subdivisions of the interval $t \in [0,T]$.}

Notice that the neural networks in $\mathcal{N}\mathcal{N}^{\varrho}_{d,1,2,m}(\Theta^{\gamma}_{m})$ have their first, second, and third derivatives uniformly bounded w.r.t. the state variable $x$, there exists some constant $C$ depending only on $d$ and the derivatives of $\varrho$ such that for any $\mathcal{U}\in\mathcal{N}\mathcal{N}^{\varrho}_{d,1,2,m}(\Theta^{\gamma}_{m})$ (see \cite{CHX}),
\begin{equation}\label{eq2.7}
\begin{aligned}
 \sup_{x\in\mathbb{R},\theta\in\Theta^{\gamma}_{m}}|D_{x}\mathcal{U}(x;\theta)|\leq C\gamma^2_{m}, \quad \sup_{x\in\mathbb{R},\theta\in\Theta^{\gamma}_{m}}|D^2_{x}\mathcal{U}(x;\theta)|\leq C\gamma^3_{m},\quad\sup_{x\in\mathbb{R},\theta\in\Theta^{\gamma}_{m}}|D^3_{x}\mathcal{U}(x;\theta)|\leq C\gamma^4_{m}.
\end{aligned}
\end{equation}

{ We should note that in this subsection, $\mathcal{U}$ denotes a neural network in $\mathcal{N}\mathcal{N}^{\varrho}_{d,1,2,m}(\Theta^{\gamma}_{m})$, while in the following subsections and throughout the remainder of the text, it is used to represent the neural network approximating the solution $u(t,x)$ to the PIDE.}

\subsection{The DFBDP scheme}  In this subsection we propose the DFBDP algorithm for approximating the decoupled FBSDEJs (\ref{eq2.3}). To do this, for a nonuniform partition of the interval $[0,T]$, let us define $\pi=\left\{t_i\right\}_{i=0}^N$ , $\Delta t_i=t_{i+1}-t_i$ , $|\pi|=\max_{i=0,\cdots,N-1} {\Delta t_i}$, and set $\Delta W_{t_i}=W_{t_{i+1}}-W_{t_i}$. Then it is well-known that an Euler scheme for the first equation or the forward equation in  (\ref{eq2.3}) has the form
\begin{equation}\label{eq2.8}
\begin{aligned}
  X^\pi_{t_{i+1}}=&\xi,\\
  X^\pi_{t_{i+1}}=&X^\pi_{t_{i}}+b(t_i,X^\pi_{t_i})\Delta t_i+\sigma(t_i,X^\pi_{t_i})\Delta W_{t_i}+\int_{t_{i}}^{t_{i+1}}\int_{E}\beta(t_i, X_{t_i},e)\widetilde{\mu}(\mathrm{d}e,\mathrm{d}s).
 \end{aligned}
 \end{equation}
Note that we already formulate the PIDEs equivalently as FBSDEJs by nonlinear Feyman-Kac formula. Then to approximation the second equation in (\ref{eq2.3}), we define
$$
\begin{gathered}
F(t, x, y, z, U):=y-f\left(t, x, y, z, \int_{E} U(e) \lambda(d e)\right) \Delta t_{i}+z^{\mathrm{T}} \Delta W_{t_{i}}+\int_{t_{i}}^{t_{i+1}}\int_{E} U(e) \widetilde{\mu}\left(\mathrm{d}e,\mathrm{d}t\right),
\end{gathered}
$$
and consider the following approximation
\begin{equation}\label{eq2.9}
\begin{aligned}
&u\left(t_{i+1}, X_{t_{i+1}}^{\pi}\right)\\
&\approx F\left(t_{i}, X_{t_{i}}^{\pi}, u\left(t_{i}, X_{t_{i}}^{\pi}\right), \sigma^{T}(t_i,X_{t_i}^{\pi}) D_{x} u\left(t_{i}, X_{t_{i}}^{\pi}\right),u\left(t_{i}, X_{t_{i}}^{\pi}+\beta\left(t_{i}, X_{t_{i}}^{\pi}, e\right)\right)-u\left(t_{i}, X_{t_{i}}^{\pi}\right)\right).
\end{aligned}
\end{equation}

Now adapting the argument of \cite{CHX} to the non-local case, we propose the following DFBDP algorithm:
\begin{enumerate}[(i)]
\item Initialize $\widehat{\mathcal{U}}_{{N}}\left(X_{t_{N}}^{\pi}\right)=g\left(X_{t_{N}}^{\pi}\right)$.
\item Approximate $u\left(t_{i}, X_{t_{i}}^{\pi}\right)$ and $U_{t_{i}}\left(X_{t_{i}}^{\pi}, e\right)=u\left(t_{i}, X_{t_{i}}^{\pi}+\beta\left(t_{i}, X_{t_{i}}^{\pi}, e\right)\right)-u\left(t_{i}, X_{t_{i}}^{\pi}\right)$ by a deep learning algorithm. More specificity, let $\rho \in \mathbb{N}$ be the size of the hidden layer and $\theta \in \mathbb{R}^{\rho}$, and use $\mathcal{U}_{t_i}^{\theta,\pi}=\mathcal{U}_{i}(\cdot;\theta_{i}): \mathbb{R}^{d} \rightarrow \mathbb{R}$ and $\mathcal{G}_{t_i}^{\theta,\pi}=\mathcal{G}_{i}(\cdot,e;\theta_{i}): \mathbb{R}^{d}\times \mathbb{R} \rightarrow \mathbb{R}, i \in\{0,1, \cdots, N-1\}$, $e \in E$, to approximate the following policy functions:
$$
\begin{gathered}
\mathcal{U}_{i}\left(X_{t_{i}}^{\pi};\theta_{i}\right) \approx u\left(t_{i}, X_{t_{i}}^{\pi}\right), \qquad
\mathcal{G}_{i}\left(X_{t_{i}}^{\pi}, e;\theta_{i}\right) \approx U_{t_{i}}\left(X_{t_{i}}^{\pi}, e\right) .
\end{gathered}
$$

\item We then minimize the expected loss function through stochastic gradient descent (SGD) type algorithms
\begin{equation}\label{eq2.10}
\begin{aligned}
&\widehat{L}_{i}(\theta)\\&\quad=\mathbb{E}\left|\widehat{\mathcal{U}}_{i+1}\left(X_{t_{i+1}}^{\pi}\right)-F\left(t_{i}, X_{t_{i}}, \mathcal{U}_{i}\left(X_{t_{i}}^{\pi};\theta_i\right), \sigma^{\mathrm{T}}(t_i,X_{t_i}^{\pi}) {D}_{x} \mathcal{U}_{{i}}\left(X_{t_{i}}^{\pi};\theta_i\right), \mathcal{G}_{{i}}\left(X_{t_{i}}^{\pi},e;\theta_i\right)\right)\right|^{2},
\end{aligned}
\end{equation}
where ${D}_{x} \mathcal{U}_{t_{i}}^{\theta, \pi}\left(X_{t_{i}}^{\pi}\right)$ is the numerical differentiation of $\mathcal{U}_{t_{i}}^{\theta, \pi}\left(X_{t_{i}}^{\pi}\right)$. Through the results calculated above $\theta_{i}^{\star} \rightarrow \inf _{\theta} \widehat{L}_{i}(\theta)$, update $\widehat{\mathcal{U}}_{i}\left(X_{t_{i}}^{\pi}\right)=\mathcal{U}_{i}\left(X_{t_{i}}^{\pi};\theta_i^{\star}\right)$, $\widehat{\mathcal{Z}}_{i}\left(X_{t_{i}}^{\pi}\right)=\sigma^{\mathrm{T}}(t_i,X_{t_i}^{\pi}) D_{x} \mathcal{U}_{i}\left(X_{t_{i}}^{\pi};\theta_{i}^{\star}\right)$, and { $\widehat{\mathcal{G}}_{i}(X_{t_i}^{\pi},e)=\mathcal{G}_{i}(X_{t_i}^{\pi},e;\theta_{i}^{\star})$}, $\widehat{\mathcal{T}}_{i}\left(X_{t_{i}}^{\pi}\right)=\mathcal{T}_{i}(X_{t_i}^{\pi};\theta_{i}^{\star})$, where $${\mathcal{T}}_{{i}}\left(X_{t_{i}}^{\pi};\theta_i\right)=\int_{E} \mathcal{G}_{{i}}\left(X_{t_{i}}^{\pi}, e;\theta_i\right) \lambda(\mathrm{d}e),$$
\end{enumerate}
and then repeat the above steps (i), (ii) and (iii).

We end this section with several remarks on our DFBDP algorithm. We first note that our DFBDP algorithm can be viewed as an extension of the DBDP2 scheme proposed in \cite{CHX} for semilinear PDEs. Since the function $F$ depends on the time interval $(t_i,t_{i+1}]$ in terms of the integrated measure $\tilde\mu(\mathrm{d}e,(t_i,t_{i+1}])$, however, we do not approximate the nonlocal term at time $t_i$ in the PIDEs but consider how the measure $\tilde \mu$ behaves on the time interval $(t_i,t_{i+1}]$.

Unlike the deep learning scheme presented in \cite{Castro21} for PIDEs in which the gradient $\sigma^T\nabla u(t_i,\cdot) $ is also approximated by neural network, we approximate it by numerical differentiation. The deep learning scheme of \cite{Castro21} can be viewed as an extension of the DBDP1 scheme proposed in \cite{CHX}.

The pseudo-code for the DFBDP Algorithm is presented in Algorithm 1.

\begin{algorithm}
\caption{DFBDP algorithm of FBSDEJs.}
\label{alg:fbsdej}
{\begin{algorithmic}[1]

\State \textbf{Input:} Given PIDE (2.1), class of deep Neural Networks $\mathcal{U}_{t_i}^{\theta,\pi} : \mathbb{R}^d \rightarrow \mathbb{R}$ and $\mathcal{G}_{t_i}^{\theta,\pi} : \mathbb{R}^d \times \mathbb{R} \rightarrow \mathbb{R}$, $i \in \{0, 1, \cdots, N-1\}$
\State \textbf{Output:} Suitable approximation of $u(t_i, X_{t_i}^{\pi})$ and $u(t_i, X_{t_i}^{\pi} + \beta(t_i, X_{t_i}^{\pi}, e)) - u(t_i, X_{t_i}^{\pi})$
\State Use the nonlinear Feynman-Kac formula and classical Euler scheme to transform PIDE into the FBSDEJ scheme
\State Generate initial state $\xi$ ;
\For{\textit{each time step $i=N-1\rightarrow 0$}}
    \If{$i<N-1$}
        \State initialize the weights and bias of the $i-th$ neural network to the weights and bias of the $(i+1)-th$ neural network
    \EndIf
    \For{\textit{iter} $<$ \textit{max iteration}}
        \For{\textit{each time step $j=0\rightarrow i$}}
            \State Simulate the Brownian increment $\Delta W_{t_j}$ and random jump size $e$
            \State Generate the forward stochastic process $X_{t_i}^{\pi}$ by (\ref{eq2.8})
        \EndFor

        \State Approximate $u(t_i, X_{t_i}^{\pi})$, $u(t_i, X_{t_i}^{\pi} + \beta(t_i, X_{t_i}^{\pi}, e)) - u(t_i, X_{t_i}^{\pi})$ by $\mathcal{U}_{t_i}^{\theta,\pi}$, $\mathcal{G}_{t_i}^{\theta,\pi}$
        \State Approximate $\sigma^{\top}(t_i, X_{t_i}^{\pi}) \nabla_x u(t_i, X_{t_i}^{\pi})$ by numerical differentiation techniques

        \If{$i = N-1$}
            \State Calculate $u(t_{N},X_{t_{N}}^{\pi})$ by terminal condition $g(\cdot)$
        \Else
            \State Approximate $u(t_{i+1},X_{t_{i+1}}^{\pi})$ by neural network $\mathcal{U}_{t_{i+1}}^{\theta^{\star},\pi}$
        \EndIf

        \State Calculate the loss function $\widehat{L}_{i}(\theta)$ by (\ref{eq2.10}) and optimize the neural network parameters $\theta_{i}$ through stochastic gradient descent
    \EndFor

\EndFor

\end{algorithmic}}
\end{algorithm}


\section{Convergence analysis for DFBDP scheme}\label{sec:1}
The main goal of this section is to show the convergence of the DFBDP scheme towards the solution $(X,Y,Z,\Gamma)$ to the FBSDEJs (\ref{eq2.3}).  {Let us firstly define the following process space for $t \in [0,T]$ and $p \geq 2$ (see \cite{BR})}.\\
{Let $\mathcal{S}^{p}(\mathbb{R})$ denote the space of $\mathcal{F}_{t}-$adapted c$\grave{a}$dl$\grave{a}$g  processes $Y:\Omega\times[0,T]\rightarrow \mathbb{R}$, which satisfy}
\begin{eqnarray*}
  ||Y||_{\mathcal{S}^{p}} := \mathbb{E}\left[\sup_{t \in [0,T]}|Y_{t}|^{p}\right]^{1/p}<\infty.
\end{eqnarray*}
{ Let $L^{p}_{W}(\mathbb{R}^{d})$ denote the space of  $\mathcal{F}_{t}-$progressively measurable  processes $Z:\Omega \times [0,T]\rightarrow \mathbb{R}^{d}$, which satisfies}
\begin{eqnarray*}
||Z||_{L^{p}_{W}} := \mathbb{E}\left[\left(\int_{0}^{T}|Z_{t}|^2\mathrm{d}t\right)^{p/2}\right]^{1/p}<\infty.
\end{eqnarray*}
{Let $L^{p}_{\lambda}(\mathbb{R})$ denote the space of mappings $U:\Omega\times[0,T]\times E\to \mathbb{R}$ which are $\mathcal{P}\bigotimes \varepsilon$ measurable and such that}
\begin{eqnarray*}
||U||_{L^{p}_{\lambda}} := \mathbb{E}\left[\int_{0}^{T}\int_{E}|U_{t}(e)|^{p}\lambda(\mathrm{d}e)\mathrm{d}t\right]^{1/p}<\infty.
\end{eqnarray*}
We set $\mathcal{B}^{p}=\mathcal{S}^p(\mathbb{R}) \times L^p_{W}(\mathbb{R}^{d})\times L^{p}_{\lambda}(\mathbb{R}).$\\

\subsection{Assumptions} We first make some assumptions on the coefficients of the equations considered here.

\textbf{Assumption (H1)} (i) { Regularity of the following functions:} { $g:\mathbb{R}^{d}\rightarrow \mathbb{R}$, $b:\mathbb{R}^{d}\rightarrow \mathbb{R}^{d}$ and $\sigma:\mathbb{R}^{d}\rightarrow \mathbb{R}^{d \times d}$} all satisfy K-Lipschitz continuous for $x \in \mathbb{R}^{d}$\\
(ii) { Boundedness and Uniformly Lipschitz of measurable functions $\beta$:} for all $e \in E$, there exists $C>0$, $\beta$ satisfy
\begin{equation*}
\sup _{e \in E}|\beta(0, 0, e)| \leq C,
\end{equation*}
and for all $x_{1}, x_{2} \in \mathbb{R}^{d}$, there exists $K>0$, $\beta$ satisfy
\begin{equation*}
\sup _{e \in E}\left|\beta\left(t, x_{1}, e\right)-\beta\left(t, x_{2}, e\right)\right| \leq K\left|x_{1}-x_{2}\right| .
\end{equation*}
(iii) { H\"older continuity of function $f(\cdot)$:} there is a constant $f_{L}$ such that $f$ satisfies:
\begin{eqnarray}
\begin{array}{lll}
&|f(t_{2},x_{2},y_{2},z_{2},\Gamma_{2})-f(t_{1}, x_{1}, y_{1}, z_{1},\Gamma_{1})| \\
&\qquad\le f_{L}\left(|t_{2}-t_{1}|^{1/2}+|x_{2}-x_{1}|+|y_{2}-y_{1}|+|z_{2}-z_{1}|+|\Gamma_{2}-\Gamma_{1}|\right),\nonumber
\end{array}
\end{eqnarray}
for all $(t_{1},x_{1},y_{1},z_{1},\Gamma_{1})$ and $(t_{2},x_{2},y_{2},z_{2},\Gamma_{2})\in[0,T]\times \mathbb{R}^{d}\times \mathbb{R}\times \mathbb{R}^{d}$, therefore:
\begin{eqnarray}
	\sup\limits_{0\leq t \leq T}|f(t,0,0,0,0)|<\infty\nonumber.
\end{eqnarray}
(iv) { Bounded derivatives with K-Lipschitz continuity of coefficients:} $b$, $\sigma$, $\beta(\cdot,e)$, $f$ and $g$ have bounded derivative function with K-Lipchitz derivatives.\\

\textbf{Assumption(H2)} (i)The functions $x\to b(t,\cdot), \sigma(t,\cdot), \beta(t,\cdot,e)$ are $C^{1}$ with bounded derivatives uniformly w.r.t. $(t,x)\in[0,T]\times \mathbb{R}^d$.\\
(ii){ and the function $x\to f(t,\cdot)$ is $C^1$ with bounded derivatives uniformly w.r.t. $(t,x,y,z,\Gamma)$ in $[0,T]\times \mathbb{R}^{d} \times \mathbb{R} \times \mathbb{R}^{d} \times \mathbb{R}$.}\\

Recall that Assumption \textbf{(H1)} ensures the existence and uniqueness of an adapted solution \\ $(\mathcal{X},Y,Z,U) \in \mathcal{S}^{2} \times \mathcal{B}^{2}$ to (\ref{eq2.3}) satisfying (see \cite{BR}, Remark 2.1)
\begin{equation}\label{eq3.0}
\begin{aligned}
    &\mathbb{E}\left[\sup_{t \in [0,T]}|\mathcal{X}_{t}-\xi|^{p}\right] \leq C(1+|\xi|^{p}),
\end{aligned}
\end{equation}

\begin{equation*}
\begin{aligned}
&||(\mathcal{X},Y,Z,U)||^{p}_{\mathcal{S}^{p}\times \mathcal{B}^{p}}\\
&\quad=\mathbb{E}\left[\sup\limits_{0\leq t \leq T}|\mathcal{X}_{t}|^{p}+\sup\limits_{0\leq t \leq T}|Y_{t}|^{p}+\left(\int_{0}^{T}|Z_{t}|^{2}dt \right)^{p/2} +\int_{0}^{T}\int_{E}|U_{t}(e)|^p\lambda(\mathrm{d}e)\mathrm{d}t\right]\leq C(1+|\xi|^{p}),
\end{aligned}
\end{equation*}
and
{\begin{equation*}
\begin{aligned}
&\mathbb{E}\left [\sup_{t_i \leq t\leq t_{i+1}}|Y_{t}-Y_{t_i}|^{p}\right]\leq C\left [\left(1+|\xi|^{p}\right)|\Delta t_i|^{p}+||Z||^{p}_{L^{p}_{W,[t_i,t_{i+1}]}}+||U||^{p}_{L^{p}_{\lambda,[t_i,t_{i+1}]}}\right ]\\
\end{aligned}
\end{equation*}}
where $\mathcal{X}_{t_0}=\xi$, $p \geq 2$ and $C$ is a constant for the above three inequalities.\\


\subsection{Main convergence results}In this section, we present our main convergence results for the DFBDP scheme. To do this, we first introduce several important properties of $(\mathcal{X}, Y, Z, \Gamma)$.

{We firstly note that the error of the Euler scheme (\ref{eq2.8}) for the forward equation (\ref{eq2.3}) satisfies (see \cite{LD} and \cite{BR})
\begin{equation}\label{eq3.1}
    \max_{i=0,\cdots,N-1} \mathbb{E}\left[\sup_{t\in[t_i,t_{i+1}]} |\mathcal{X}_{t_{}}-X_{t_{i}}|^{2}\right]=O(|\pi|).
\end{equation}}

{ In addition, we have the $L^{2}$-regularity of $Y$ (see \cite{BR}, Proposition 2.1)
\begin{eqnarray}\label{eq3.2}
\sum_{i=0}^{N-1} \mathbb{E}\left[\int_{t_i}^{t_{i+1}}|Y_{t}-Y_{t_i}|^{2}\mathrm{d}t\right]=O(|\pi|).
\end{eqnarray}}

We also have the $L^{2}$-regularity of $Z$ and $\Gamma$
\begin{eqnarray}\label{eq3.3}
\varepsilon^Z(\pi):={ \mathbb{E}}\left[\sum_{i=0}^{N-1}\int_{t_{i}}^{t_{i+1}}|Z_{t}-\bar{Z}_{t_{i}}|^2dt\right] \quad with \quad  \bar{Z}_{t_{i}}:=\frac{1}{\Delta t_{i}}{ \mathbb{E}_{i}}\left[\int_{t_{i}}^{t_{i+1}}Z_{t}dt\right],
\end{eqnarray}
and
\begin{eqnarray}\label{eq3.4}
\varepsilon^{\Gamma}(\pi):={\mathbb{E}}\left[\sum_{i=0}^{N-1}\int_{t_{i}}^{t_{i+1}}|\Gamma_{t}-\bar{\Gamma}_{t_{i}}|^2dt\right] \quad with \quad  \bar{\Gamma}_{t_{i}}:=\frac{1}{\Delta t_{i}}{\mathbb{E}_{i}}\left[\int_{t_{i}}^{t_{i+1}}\Gamma_{t}dt\right],
\end{eqnarray}
{where $\mathbb{E}_{i}$ denotes the conditional expectation given $\mathcal{F}_{t_i}$}. Since $\overline{Z}$ is an $L^2-$projection of $Z$ and $\overline{\Gamma}$ is an $L^2-$projection of $\Gamma$, { we know that under Conditions (i) to (iv) of (\textbf{H1}), the following properties related to $Z$ and $\Gamma$ hold: (see \cite{Castro21})
\begin{equation}\label{eq3.5}
\varepsilon^{Z}(\pi)=O(|\pi|),\qquad \varepsilon^{\Gamma}(\pi)=O(|\pi|).
\end{equation}}

Next, we introduce several important lemmas for stochastic calculus.
\begin{lemma}[\cite{R2005}]\label{le2.1}
	Let $Z^1, Z^2\in L^{2}_{W}(\mathbb{R}^{d})$ and $U^1, U^2\in L^{2}_{\mu}(\mathbb{R})$, we have
	\begin{eqnarray*}
	\mathbb{E}\left[ \int_{t_{i}}^{t_{i+1}}Z_{s}^1\mathrm{d}W_{s}\int_{t_{i}}^{t_{i+1}}Z_{s}^2\mathrm{d}W_{s}\right]=\mathbb{E}\left[\int_{t_{i}}^{t_{i+1}}Z_{s}^1Z_{s}^2\mathrm{d}s\right],
    \end{eqnarray*}	
	\begin{eqnarray*}
		\mathbb{E}\left[ \int_{t_{i}}^{t_{i+1}}\int_{E}U_{s}^1(e)\tilde{\mu}(\mathrm{d}e,\mathrm{d}s)\int_{t_{i}}^{t_{i+1}}Z_{s}^2\mathrm{d}W_{s}\right]=0,
		    \end{eqnarray*}	
and
	\begin{eqnarray*}
	\mathbb{E}\left[\int_{t_{i}}^{t_{i+1}}\int_{E}U_{s}^1(e)\tilde{\mu}(\mathrm{d}e,\mathrm{d}s)\int_{t_{i}}^{t_{n+1}}\int_{E}U_{s}^2(e)\tilde{\mu}(\mathrm{d}e,\mathrm{d}s)\right]=\mathbb{E}\left[\int_{t_{i}}^{t_{i+1}}\int_{E}U_{s}^{1}(e)U_{s}^{2}(e)\lambda(\mathrm{d}e)\mathrm{d}s\right].
	\end{eqnarray*}	
\end{lemma}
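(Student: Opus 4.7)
The plan is to reduce all three identities to the standard Itô isometry for the Brownian integral and the analogous isometry for the compensated Poisson random measure, combined with the independence of $W$ and $\mu$ that was posited when the stochastic basis was introduced. All three statements are essentially polarization identities for $L^2$-isometric maps, so the approach will be to first record the corresponding squared-norm identities and then deduce the bilinear versions.

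First I would recall that for any $Z\in L^{2}_{W}(\mathbb{R}^{d})$ the stochastic integral satisfies
\begin{equation*}
\mathbb{E}\left[\left|\int_{t_i}^{t_{i+1}} Z_s\,\mathrm{d}W_s\right|^{2}\right] = \mathbb{E}\left[\int_{t_i}^{t_{i+1}} |Z_s|^{2}\,\mathrm{d}s\right],
\end{equation*}
and that for any $U\in L^{2}_{\lambda}(\mathbb{R})$ the integral against the compensated measure $\widetilde\mu$ satisfies
\begin{equation*}
\mathbb{E}\left[\left|\int_{t_i}^{t_{i+1}}\!\!\int_E U_s(e)\,\widetilde\mu(\mathrm{d}e,\mathrm{d}s)\right|^{2}\right] = \mathbb{E}\left[\int_{t_i}^{t_{i+1}}\!\!\int_E |U_s(e)|^{2}\,\lambda(\mathrm{d}e)\,\mathrm{d}s\right].
\end{equation*}
Both identities are the classical Itô isometries and may be quoted directly from [R2005]. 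Applying these isometries to $Z^1+Z^2$ (respectively $U^1+U^2$) and subtracting the isometries applied to $Z^1$ and $Z^2$ (respectively $U^1$ and $U^2$) yields the first and third identities by polarization.

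Next I would address the cross term in the second identity. Because $W$ and $\mu$ are independent and $\widetilde\mu$ is the compensated Poisson random measure, the product $M_t := \int_{t_i}^{t}\int_E U_s^1(e)\,\widetilde\mu(\mathrm{d}e,\mathrm{d}s)\cdot\int_{t_i}^{t} Z_s^2\,\mathrm{d}W_s$ is a martingale starting from $0$, since the bracket $\langle \widetilde\mu, W\rangle$ vanishes in the absence of common jumps. Taking expectations and using the martingale property at $t=t_{i+1}$ (or, equivalently, noting that the quadratic covariation of a pure-jump compensated martingale and a continuous martingale is zero) gives the vanishing of the cross term.

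The routine obstacle is essentially bookkeeping: to make the polarization rigorous one must verify that the sums $Z^1+Z^2$ and $U^1+U^2$ still lie in the relevant spaces $L^{2}_{W}(\mathbb{R}^{d})$ and $L^{2}_{\lambda}(\mathbb{R})$, which is immediate by the triangle inequality. The only genuinely delicate point is the vanishing of the cross-expectation, which truly rests on the independence of $W$ and $\mu$ assumed in the probabilistic setup; if one wished to avoid invoking independence one could instead appeal to the Kunita–Watanabe inequality together with $\langle W,\widetilde\mu\rangle=0$. Since both routes are entirely classical and reference [R2005] is explicitly cited, I would keep the argument brief and present the three identities as direct consequences of Itô isometry, its Poisson analogue, and orthogonality of the two martingale components.
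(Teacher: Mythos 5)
Your argument is correct: the polarization of the two Itô isometries gives the first and third identities, and the orthogonality of a continuous square-integrable martingale and a purely discontinuous compensated-jump martingale (equivalently, the independence of $W$ and $\mu$) gives the vanishing cross term. The paper itself offers no proof, simply citing \cite{R2005}, and your derivation is exactly the standard one found there, so nothing further is needed.
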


\begin{lemma}[Martingale representation theorem \cite{R2005}]\label{le2.2}
	Let m(t), $0\leq t\leq T$, be a martingale with respect to this filtration $\mathbb F$, then there is an adapted process $(Z, U)\in L^{2}_{W}(\mathbb{R}^{d})\times L^{2}_{\mu}(\mathbb{R})$, such that
	\begin{eqnarray*}
	m(t)=m(0)+ \int_{0}^{t}Z_{s}^{T}\mathrm{d}W_{s}+\int_{0}^{t}\int_{E}U_{s}(e)\widetilde{\mu}(\mathrm{d}e,\mathrm{d}s).
	\end{eqnarray*}	
\end{lemma}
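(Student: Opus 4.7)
The statement is the classical Martingale Representation Theorem (equivalently, the predictable representation property) for the filtration $\mathbb{F}$ generated jointly by the Brownian motion $W$ and the independent Poisson random measure $\mu$. Since the authors cite \cite{R2005}, my plan is to outline the standard Dol\'eans-Dade proof rather than reproduce a full derivation; the essential structural fact is that the independence of $W$ and $\mu$ gives the pair $(W,\widetilde{\mu})$ the predictable representation property with respect to $\mathbb{F}$.

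First I would reduce to the $L^{2}$ setting: because the conclusion places $(Z,U)$ in $L^{2}_{W}(\mathbb{R}^{d})\times L^{2}_{\mu}(\mathbb{R})$, the implicit standing hypothesis is that $m$ is a square-integrable martingale, so I may set $\xi:=m(T)\in L^{2}(\mathcal{F}_{T})$ and look for $(Z,U)$ satisfying
\begin{eqnarray*}
\xi=\mathbb{E}[\xi]+\int_{0}^{T}Z_{s}^{\mathsf{T}}\mathrm{d}W_{s}+\int_{0}^{T}\int_{E}U_{s}(e)\widetilde{\mu}(\mathrm{d}e,\mathrm{d}s).
\end{eqnarray*}
Taking $\mathbb{E}[\,\cdot\,|\,\mathcal{F}_{t}]$ on both sides and using the martingale property of the two stochastic integrals then yields the required formula for $m(t)=\mathbb{E}[\xi\mid\mathcal{F}_{t}]$, with $m(0)=\mathbb{E}[\xi]$ following by taking $t=0$.

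Next I would introduce the family of Dol\'eans-Dade stochastic exponentials of the form $\mathcal{E}\bigl(\int_{0}^{\cdot}f_{s}^{\mathsf{T}}\mathrm{d}W_{s}+\int_{0}^{\cdot}\int_{E}h_{s}(e)\widetilde{\mu}(\mathrm{d}e,\mathrm{d}s)\bigr)$ with $f$ bounded and deterministic and $h>-1$ satisfying the usual integrability with respect to $\lambda$. Each such exponential is a square-integrable martingale and, by It\^o's formula for semimartingales with jumps, already admits a representation of the desired form. The analytic heart of the argument is the density step: the linear span of the terminal values of these exponentials is dense in $L^{2}(\mathcal{F}_{T})$, which is the classical multiplicative-system / monotone-class argument and relies crucially on the independence of $W$ and $\mu$ together with the generation of $\mathcal{F}_{T}$ by the increments of $W$ and the values of $\mu$ on disjoint Borel sets. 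Once density is available, an $L^{2}$-Cauchy passage to the limit, combined with the It\^o isometry supplied by Lemma \ref{le2.1}, produces integrands in $L^{2}_{W}\times L^{2}_{\mu}$ representing $\xi$.

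The main obstacle is precisely this density step, since it is where the joint structure of $(W,\mu)$ and the integrability assumption $\int_{E}(1\wedge|e|^{2})\lambda(\mathrm{d}e)<\infty$ must be exploited; for a L\'evy measure that is only $\sigma$-finite on $E=\mathbb{R}^{d}\setminus\{0\}$ one must first truncate small jumps, apply the argument on each truncation, and pass to the limit by monotone-class reasoning near the origin. As this is a well-established result, after indicating the reduction I would defer the technical verification of density to \cite{R2005}.
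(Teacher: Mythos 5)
The paper does not prove this lemma; it is quoted directly from the cited reference \cite{R2005}, so there is no in-paper argument to compare against. Your sketch — reduction to the square-integrable case, representation of Dol\'eans-Dade exponentials driven by $(W,\widetilde{\mu})$, density of their span in $L^{2}(\mathcal{F}_{T})$ via a monotone-class argument exploiting independence, and an $L^{2}$-limit using the It\^o isometry of Lemma \ref{le2.1} — is the standard proof found in such references, and your observation that square-integrability of $m$ is an implicit hypothesis is a correct and worthwhile clarification.
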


Now let us investigate the convergence of the DFBDP scheme. We define (implicity)
\begin{eqnarray}\label{eq3.6}
\left\{
\begin{aligned}
\widehat{\mathcal{V}}_{t_{i}}:&=\mathbb{E}_{i}\left[\widehat{\mathcal{U}}_{i+1}(X_{t_{i+1}})\right]+f(t_{i}, X_{t_{i}},\widehat{\mathcal{V}}_{t_{i}},\bar{\widehat{Z}}_{t_{i}},\bar{\widehat{\Gamma}}_{t_{i}})\Delta t_{i},\\
\overline{\widehat{Z}}_{t_{i}}:&=\frac{1}{\Delta t_{i}}\mathbb{E}_{i}\left[{\widehat{\mathcal{U}}_{i+1}}(X_{t_{i+1}})\Delta W_{t_{i}}\right],\\
\overline{\widehat{\Gamma}}_{t_{i}}:&=\frac{1}{\Delta t_{i}}\mathbb{E}_{i}\left[{\widehat{\mathcal{U}}_{i+1}}(X_{t_{i+1}})\int_{t_i}^{t_{i+1}}\int_{E}\gamma(e)\widetilde{\mu}(\mathrm{d}e,\mathrm{d}t)\right],
\end{aligned}
\right.
\end{eqnarray}
for $i= 0,\cdots,N-1$. Notice that $\hat{\mathcal{V}}_{t_{i}}$ is well-defined for $|\pi|$ small enough (recall that $f$ is Lipschitz) by a fixed point argument. By the Markov property of the discretized forward process $(X_{t_{i}}), i=0,\cdots,N$, there exists some deterministic functions
$\widehat{v}_{{i}}(\cdot)$,$\overline{\widehat{z}}_{i}(\cdot)$, and $\overline{\widehat{\mathcal{T}}}_{{i}}(\cdot)$ such that
{\begin{eqnarray}
\begin{aligned}
\widehat{\mathcal{V}}_{t_{i}}= \widehat{v}_{i}(X_{t_i}),\qquad
\overline{\widehat{Z}}_{{i}}=\overline{
\widehat{z}}_{i}(X_{t_i}),\qquad
\overline{\widehat{\Gamma}}_{t_{i}}=\overline{\widehat{\mathcal{T}}}_{i}(X_{t_i}),
\nonumber
 \end{aligned}
\end{eqnarray}}
where
{ \begin{equation*}
\overline{\widehat{\mathcal{T}}}_{i}(X_{t_i})=\int_{E}\overline{\widehat{U}}_{i}(X_{t_i},e)\gamma(e)\lambda(\mathrm{d}e).
\end{equation*}}

Moreover, there exists an $\mathbb{R}^d-$valued
square integrable process {$\{\widehat{Z}_t\}_{t\geq 0}$ and $\{\widehat{U}_{t}(e)\}_{t\geq 0}$} such that
\begin{eqnarray}\label{eq3.7}
\begin{aligned}
\widehat{\mathcal{U}}_{i+1}(X_{t_{i+1}})&=\widehat{\mathcal{V}}_{t_{i}}-f\left(t_{i}, X_{t_{i}},\widehat{\mathcal{V}}_{t_{i}},\overline{\widehat{Z}}_{t_{i}},\overline{\widehat{\Gamma}}_{t_{i}}\right)\Delta t_{i}\\
&\quad+\int_{t_{i}}^{t_{i+1}}\widehat{Z}^{\mathrm{T}}\mathrm{d}W_{s}+\int_{t_{i}}^{t_{i+1}}\int_{E}\widehat{U}_{s}(e)\widetilde{\mu}(\mathrm{d}e,\mathrm{d}s),
\end{aligned}
\end{eqnarray}
and by It$\hat{o}$ isometry, we have
\begin{equation*}
\begin{aligned}
&\overline{\widehat{Z}}_{t_{i}}= \frac{1}{\Delta t_{i}}\mathbb{E}_{i}\left[\int_{t_{i}}^{t_{i+1}}\widehat{Z}_{s}ds\right],\quad\overline{\widehat{\Gamma}}_{t_{i}}{=\frac{1}{\Delta t_{i}}\mathbb{E}_{i}\left[\int_{t_{i}}^{t_{i+1}}\int_{E}\widehat{U}_{s}(e)\gamma(e)\lambda(\mathrm{d}e)\mathrm{d}s\right].}
\end{aligned}
\end{equation*}

{Let us now define a measure of the (squared) error for the DFBDP scheme by
\begin{equation*}
\begin{aligned}
\mathcal{E}[( \widehat{\mathcal{U}}, \widehat{\mathcal{Z}},\widehat{\mathcal{T}} ), (Y, Z,\Gamma)] :&= \max_{i=0,\cdots,N-1}\mathbb{E}\left|Y_{t_i}-\widehat{\mathcal{U}}_{i}(X_{t_{i}})\right|^{2}
\\&\quad+\mathbb{E}\left[\sum_{i=0}^{N-1} \int_{t_{i}}^{t_{i+1}}\left|Z_{t}-\widehat{\mathcal{Z}}_{i}\left(X_{t_{i}}\right)\right|^{2} d t\right]+\mathbb{E}\left[\sum_{i=0}^{N-1} \int_{t_{i}}^{t_{i+1}}\left|\Gamma_{t}-\widehat{\mathcal{T}}_{i}\left(X_{t_{i}}\right)\right|^{2} d t\right] ,
\end{aligned}
\end{equation*}}
{where $\widehat{\mathcal{T}}_{i}(X_{t_i})=\int_{E}\mathcal{\mathcal{G}}(X_{t_i},e;\theta^{\star})\gamma(e)\lambda(\mathrm{d}e)$}, and two measures of the approximation error of the neural network by
{\begin{equation}
\begin{aligned}
\varepsilon^{\mathcal{N},v}_{i}:=\inf_{{\xi}}&\left\{ \mathbb{E}|\widehat{v}_{t_{i}}(X_{t_i})-\widehat{\mathcal{U}}_{i}(X_{t_{i}})|^2+\right. \left.\Delta t_{i}\mathbb{E}\left|\sigma^{T}(t_i,X_{t_{i}})(D_{x}\widehat{v}_{i}(X_{t_{i}})-D_{x}\mathcal{U}_{i}(X_{t_{i}};\xi))\right|^2 \right\},\nonumber
\end{aligned}
\end{equation}
and
\begin{equation}
\begin{aligned}
\varepsilon^{\mathcal{N},\Gamma}_{i}:=\inf _{{\eta}}&\left\{ \mathbb{E}\left[\int _{E}\left(\overline{\widehat{U}}_{t_{i}}(e)-\mathcal{G}_{i}(X_{t_{i}},e;\eta)\right)^{2}\lambda(\mathrm{d}e)\right] \right\}.\nonumber
\end{aligned}
\end{equation}}
Then we know that $\varepsilon^{\mathcal{N},v}_{i}$ and $\varepsilon^{\mathcal{N},\Gamma}_{i}$ converge to zero as $m$ goes to infinity in view of the universal approximation theorem. As a consequence, we have the following error estimates.

\begin{theorem}[Convergence of DFBDP scheme] Under (\textbf{H1})-(\textbf{H2}), there exists a constant $C>0$, independent of $\pi$, such that
\begin{equation}\label{eq3.8}
\begin{aligned}
&{\mathcal{E}\left[\left( \widehat{\mathcal{U}}, \widehat{\mathcal{Z}},\widehat{\mathcal{T}} \right), (Y, Z,\Gamma)\right]} \\
&\quad \leq C\left[\mathbb{E}\left|g\left(\mathcal{X}_{T}\right)-g\left(X_{T}\right)\right|^{2}+\frac{\gamma_{m}^{6}}{N}+\varepsilon^{Z}(\pi)+\varepsilon^{\Gamma}(\pi)+ {\sum_{i=0}^{N-1}\left(N \varepsilon_{i}^{\mathcal{N}, v}+\varepsilon_{i}^{\mathcal{N}, \Gamma}\right)}\right].
\end{aligned}
\end{equation}
\end{theorem}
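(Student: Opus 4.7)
The plan is to adapt the DBDP2 stability/consistency argument of \cite{CHX} to the nonlocal setting, tracking separately the BSDE-discretization error and the neural-network approximation error. The implicit auxiliary scheme $(\widehat{\mathcal{V}}_{t_i},\overline{\widehat{Z}}_{t_i},\overline{\widehat{\Gamma}}_{t_i})$ defined in \eqref{eq3.6}, together with its martingale representation \eqref{eq3.7} (which comes from Lemma~\ref{le2.2}), serves as the pivot: it is the exact $L^{2}$-optimal one-step projection onto $\mathcal{F}_{t_i}$ starting from the neural-network iterate $\widehat{\mathcal{U}}_{i+1}$. I decompose
$$Y_{t_i}-\widehat{\mathcal{U}}_i(X_{t_i}) = \bigl(Y_{t_i}-\widehat{\mathcal{V}}_{t_i}\bigr) + \bigl(\widehat{\mathcal{V}}_{t_i}-\widehat{\mathcal{U}}_i(X_{t_i})\bigr),$$
and analogously for the $Z$- and $\Gamma$-components using $\overline{\widehat{Z}}_{t_i}$ and $\overline{\widehat{\Gamma}}_{t_i}$.

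For the first (discretization) piece, I subtract the exact backward equation from \eqref{eq3.7} on $[t_i,t_{i+1}]$, square, and apply the It\^o isometries of Lemma~\ref{le2.1}. This produces an identity linking $\mathbb{E}|Y_{t_i}-\widehat{\mathcal{V}}_{t_i}|^{2}$ to $\mathbb{E}|Y_{t_{i+1}}-\widehat{\mathcal{U}}_{i+1}(X_{t_{i+1}})|^{2}$, the integrated $L^{2}$-errors on $Z-\widehat{Z}$ and $U-\widehat{U}$, and a driver-difference term. Using $f_L$-Lipschitz continuity from \textbf{(H1)}(iii) together with Young's inequality to absorb cross terms, and then plugging in the regularities \eqref{eq3.2}-\eqref{eq3.5} and the forward-Euler bound \eqref{eq3.1}, I obtain a backward discrete recursion
$$\mathbb{E}|Y_{t_i}-\widehat{\mathcal{V}}_{t_i}|^{2} \le (1+C\Delta t_i)\,\mathbb{E}|Y_{t_{i+1}}-\widehat{\mathcal{U}}_{i+1}(X_{t_{i+1}})|^{2} + C\,\rho_i,$$
in which $\rho_i$ collects the discretization residuals telescoping to $\varepsilon^{Z}(\pi)+\varepsilon^{\Gamma}(\pi)$, and the terminal datum is $\mathbb{E}|g(\mathcal{X}_T)-g(X_T)|^{2}$. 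Discrete Gronwall closes this step once the NN-side residuals are inserted.

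For the second (NN-approximation) piece, I exploit that $\theta_i^{\star}$ minimises the loss $\widehat{L}_i$ in \eqref{eq2.10}. Substituting \eqref{eq3.7} for $\widehat{\mathcal{U}}_{i+1}$ inside the squared loss and applying Lemma~\ref{le2.1} produces a decomposition of the form
$$\widehat{L}_i(\theta) = \mathbb{E}|\widehat{\mathcal{V}}_{t_i}-\mathcal{U}_i(X_{t_i};\theta)|^{2} + \Delta t_i\,\mathbb{E}|\overline{\widehat{Z}}_{t_i}-\sigma^{\top}D_x\mathcal{U}_i|^{2} + \Delta t_i\,\mathbb{E}\!\int_E|\overline{\widehat{U}}_{t_i}(e)-\mathcal{G}_i(X_{t_i},e;\theta)|^{2}\lambda(\mathrm{d}e) + R_i(\theta),$$
where $R_i$ gathers lower-order cross terms absorbed by Young's inequality and a $Z$-regularity residual. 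Comparing $\inf_\theta \widehat{L}_i$ with the definitions of $\varepsilon^{\mathcal{N},v}_i$ and $\varepsilon^{\mathcal{N},\Gamma}_i$ and inserting the result into the Gronwall step of the previous paragraph yields the $\sum_i(N\varepsilon^{\mathcal{N},v}_i + \varepsilon^{\mathcal{N},\Gamma}_i)$ contribution, the factor $N$ arising from the $1/\Delta t_i$ weight with which the one-step NN error enters the Young absorption of the cross term between $(Y_{t_i}-\widehat{\mathcal{V}}_{t_i})$ and $(\widehat{\mathcal{V}}_{t_i}-\widehat{\mathcal{U}}_i(X_{t_i}))$.

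The main obstacle is controlling the numerical-differentiation gap between the finite-difference gradient used in the loss and the analytic gradient of the neural network that appears in the definition of $\varepsilon^{\mathcal{N},v}_i$. A Taylor expansion of $\mathcal{U}_i(\cdot;\theta)$ at $X_{t_i}$, combined with the derivative bounds \eqref{eq2.7} that scale as $\gamma_m^{k+1}$, gives a pointwise finite-difference error of order $h\gamma_m^{3}$, where $h$ is the differentiation step. Choosing $h$ of the natural Brownian order $|\pi|^{1/2}\sim N^{-1/2}$ and squaring in the $Z$-component of the error measure $\mathcal{E}$ produces $h^{2}\gamma_m^{6} = \gamma_m^{6}/N$, which is exactly the mysterious term in \eqref{eq3.8}; condition \eqref{eq2.6} ensures it vanishes as $m,N\to\infty$. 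Plugging all these bounds into the Gronwall recursion of step two and taking the maximum over $i$ produces the stated estimate \eqref{eq3.8}.
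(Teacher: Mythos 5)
Your overall architecture matches the paper's: the pivot through the implicit scheme $(\widehat{\mathcal{V}}_{t_i},\overline{\widehat{Z}}_{t_i},\overline{\widehat{\Gamma}}_{t_i})$ of \eqref{eq3.6}, the backward recursion plus discrete Gronwall, the decomposition of the loss $\widehat{L}_i=\widetilde{L}_i+(\text{projection residuals})$ with two-sided Young bounds, and the factor $N$ coming from the $1/\Delta t_i$ weight in absorbing the cross term are all exactly what the paper does in its Steps 1--3 and 5.

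However, your explanation of the $\gamma_m^{6}/N$ term is not the paper's mechanism, and as stated it leaves a genuine gap. You attribute this term to the finite-difference error of a numerical-differentiation step $h\sim N^{-1/2}$ applied to $\mathcal{U}_i(\cdot;\theta)$. But in the analysis (and in the definitions of $\varepsilon_i^{\mathcal{N},v}$ and of $\widehat{\mathcal{Z}}_i$) the quantity used is the exact derivative $D_x\mathcal{U}_i(X_{t_i};\theta)$; no differentiation step $h$ is ever introduced, so this heuristic cannot be the source of the term. The actual obstacle is different: the upper bound on $\widetilde{L}_i(\theta)$ contains $\Delta t_i\,\mathbb{E}|\overline{\widehat{Z}}_{t_i}-\sigma^{\mathsf{T}}D_x\mathcal{U}_i(X_{t_i};\xi)|^{2}$, whereas $\varepsilon_i^{\mathcal{N},v}$ measures the distance of $\sigma^{\mathsf{T}}D_x\mathcal{U}_i(\cdot;\xi)$ to $\sigma^{\mathsf{T}}D_x\widehat{v}_i$, the gradient of the implicit value function. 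Closing the triangle requires bounding $\mathbb{E}|\sigma^{\mathsf{T}}D_x\widehat{v}_i(X_{t_i})-\overline{\widehat{Z}}_{t_i}|^{2}$, i.e., showing that the $L^{2}$-projection-based $\overline{\widehat{z}}_i$ nearly coincides with the gradient of $\widehat{v}_i$. The paper does this in its Step 4 by Malliavin integration by parts (including the Poisson version for the nonlocal terms), explicit differentiation of $\widetilde{v}_i$, $\overline{\widehat{z}}_i$, $\overline{\widehat{\mathcal{T}}}_i$, the implicit function theorem for $\widehat{v}_i$, and crucially the derivative bounds \eqref{eq2.7} on the network class $\mathcal{N}\mathcal{N}^{\varrho}_{d,1,2,m}(\Theta^{\gamma}_{m})$ (which require $D_x^2$ and $D_x^3$ of $\widehat{\mathcal{U}}_{i+1}$); this yields a per-step contribution $C(\gamma_m^{4}+\gamma_m^{6}+\gamma_m^{8}|\pi|^{2})|\pi|^{2}$, which after the $N\sum_i$ weighting and \eqref{eq2.6} becomes $O(\gamma_m^{6}/N)$. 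Your proposal contains no substitute for this step, so the term $\Delta t_i\,\mathbb{E}|\overline{\widehat{Z}}_{t_i}-\sigma^{\mathsf{T}}D_x\mathcal{U}_i|^{2}$ in the loss bound cannot be converted into $\varepsilon_i^{\mathcal{N},v}$ plus a controlled remainder, and the recursion does not close. The same issue arises, in milder form, for the jump component, where the paper additionally needs the Cauchy--Schwarz estimate with $K_\gamma$ to pass from the scalar kernel error to the $L^{2}(\lambda)$ error.
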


\begin{proof} In the following, we show this theorem by the following five steps in which $C$ will denote a positive generic constant independent of $|\pi|$ and may take different values from line to line.

{\bf Step 1}: Fixing $ i\in{0,...,N-1}$, subtracting (\ref{eq2.3}) and  (\ref{eq3.7}) leads to
\begin{eqnarray*}
Y_{t_{i}}-\widehat{\mathcal{V}}_{t_{i}}=\mathbb{E}_{i}\left[Y_{t_{i+1}}-\widehat{\mathcal{U}}_{i+1}(X_{t+1})\right]+\mathbb{E}_{i}\left[\int_{t_{i}}^{t_{i+1}}\left(f(t,{ \mathcal{X}_{t}}, Y_{t}, Z_{t}, \Gamma_{t})-f\left(t_{i}, X_{t_{i}},\widehat{\mathcal{V}}_{t_{i}},\overline{\widehat{Z}}_{t_{i}},\overline{\widehat{\Gamma}}_{t_{i}}\right)\right)dt\right].
\end{eqnarray*}
By using Young inequality: $(a+b)^2\leq (1+r \Delta t_{i})a^2+(1+\frac{1}{r \Delta t_{i}})b^2$ for some $r >0$ to be choose later, Cauchy-Schwarz inequality, the Lipschitz condition on $f$, and {the error bound of the Euler scheme of the forward process (\ref{eq3.1})}, we have
\begin{eqnarray}\label{eq3.9}
\begin{array}{lll}
	&\mathbb{E}|Y_{t_{i}}-\widehat{\mathcal{V}}_{t_{i}}|^2\\
 &\quad\leq(1+r \Delta t_{i})\mathbb{E}
	\left| \mathbb{E}_{i}\left[Y_{t_{i+1}}-\widehat{\mathcal{U}}_{i+1}(X_{t_i+1})\right]\right|^2\\
 &\qquad+5{f_{L}^2}\Delta t_{i}(1+\frac{1}{r \Delta t_{i}})\left\{|\Delta t_{i}|^2+{\mathbb{E}\left[\int_{t_i}^{t_{i+1}}\left|\mathcal{X}_{t}-X_{t_i}\right|^{2}\mathrm{d}t\right]}+\mathbb{E}\left[\int_{t_{i}}^{t_{i+1}}\left|Y_{t}-\widehat{\mathcal{V}}_{t_{i}}\right|^2\mathrm{d}t\right]\right. \\
 &\qquad\left. +\mathbb{E}\left[\int_{t_{i}}^{t_{i+1}}\left|Z_{t}-\overline{\widehat{Z}}_{t_{i}}\right|^2\mathrm{d}t\right]
	+\mathbb{E}\left[\int_{t_{i}}^{t_{i+1}}\left|\Gamma_{t}-\overline{\widehat{\Gamma}}_{t_{i}}\right|^2\mathrm{d}t\right]\right\}\\
&\quad\leq (1+r \Delta t_{i})\mathbb{E}
	\left| \mathbb{E}_{i}\left[Y_{t_{i+1}}-\widehat{\mathcal{U}}_{i+1}(X_{t+1})\right]\right|^2\\
&\qquad+\frac{5f_{L}^2}{r}(1+r \Delta t_{i})\left\{{C}|\pi|^2+{2}\mathbb{E}\left[\int_{t_{i}}^{t_{i+1}}\left|Y_{t}-Y_{t_{i}}\right|^2\mathrm{d}t\right]\right. \\
 &\qquad+\left. 2\Delta t_{i}{\mathbb{E}}\left|Y_{t}-\widehat{\mathcal{V}}_{t_{i}}\right|^2+\mathbb{E}\left[\int_{t_{i}}^{t_{i+1}}\left|Z_{t}-\overline{\widehat{Z}}_{t_{i}}\right|^2\mathrm{d}t\right]
	+\mathbb{E}\left[\int_{t_{i}}^{t_{i+1}}\left|\Gamma_{t}-\overline{\widehat{\Gamma}}_{t_{i}}\right|^2\mathrm{d}t\right]\right\}.
\end{array}
\end{eqnarray}
Recalling the definition of $\overline{Z}_{t_i}$ (\ref{eq3.3}) and $\overline{\Gamma}_{t_i}$ (\ref{eq3.4}), we observe that
\begin{eqnarray}\label{c}
\begin{aligned}
	\mathbb{E}\left[\int_{t_{i}}^{t_{i+1}}\left|Z_{t}-\overline{\widehat{Z}}_{t_{i}}\right|^2dt \right]=\mathbb{E}\left[\int_{t_{i}}^{t_{i+1}}\left|Z_{t}-{\overline{{Z}}_{t_{i}}}\right|^2dt\ \right]+ \Delta t_{i}\mathbb{E}\left|{\overline{Z}_{t_i}}-\overline{\widehat{Z}}_{t_{i}}\right|^2,
\end{aligned}
\end{eqnarray}
and
\begin{eqnarray}\label{h}
\begin{aligned}
	\mathbb{E}\left[\int_{t_{i}}^{t_{i+1}}\left|\Gamma_{t}-\overline{\widehat{\Gamma}}_{t_{i}}\right|^2dt \right]=\mathbb{E}\left[\int_{t_{i}}^{t_{i+1}}\left|\Gamma_{t}-{\overline{{\Gamma}}_{t_{i}}}\right|^2dt\ \right]+ \Delta t_{i}\mathbb{E}\left|{\overline{\Gamma}_{t_i}}-\overline{\widehat{\Gamma}}_{t_{i}}\right|^2.
\end{aligned}
\end{eqnarray}
Multiplying equation (\ref{eq2.3}) between $t_{i}$ and $t_{i+1}$ by $\Delta W_{t_{i}}$, according to  Lemma 3.1 and (\ref{eq3.6}), we have together with
\begin{eqnarray*}
	\begin{aligned}
	\Delta t_{i}(\overline{Z}_{t_i}-\overline{\widehat{Z}}_{t_{i}})=&\mathbb{E}_{i}\left[\Delta W_{t_{i}}\left(Y_{t_{i+1}}-\widehat{\mathcal{U}}_{i+1}(X_{t_{i+1}})\right)\right]+\mathbb{E}_{i}\left[\Delta W_{t_{i}}\int_{t_{i}}^{t_{i+1}}f(t,\mathcal{X}_{t}, Y_{t}, Z_{t}, \Gamma_{t})\mathrm{d}t\right]\\
	=&\mathbb{E}_{i}\left[\Delta W_{t_{i}}\left(Y_{t_{i+1}}-\widehat{\mathcal{U}}_{i+1}(X_{t_{i+1}})-\mathbb{E}_{i}\left[Y_{t_{i+1}}-\widehat{\mathcal{U}}_{i+1}(X_{t_{i+1}})\right]\right)\right]\\
&+\mathbb{E}_{i}\left[\Delta W_{t_{i}}\int_{t_{i}}^{t_{i+1}}f(t,\mathcal{X}_{t}, Y_{t}, Z_{t}, \Gamma_{t})dt\right].	
    \end{aligned}
\end{eqnarray*}
Similarly, multiplying equation (\ref{eq2.3}) between $t_{i}$ and $t_{i+1}$ by $\int_{t_{i}}^{t_{i+1}}\int_{E}\gamma(e)\tilde{\mu}(\mathrm{d}e,\mathrm{d}t)$, according to Lemma 3.1 and (\ref{eq3.6}), we have
\begin{eqnarray*}
\begin{aligned}
\Delta t_{i}(\overline{\Gamma}_{t_i}-\overline{\widehat{\Gamma}}_{t_{i}})=&\mathbb{E}_{i}\left[\int_{t_{i}}^{t_{i+1}}\int_{E}\gamma(e)\widetilde{\mu}(\mathrm{d}e,\mathrm{d}t)\left(Y_{t_{i+1}}-\widehat{\mathcal{U}}_{i+1}(X_{t_{i+1}})\right)\right]\\
&+\mathbb{E}_{i}\left[\int_{t_{i}}^{t_{i+1}}\int_{E}\gamma(e)\widetilde{\mu}(\mathrm{d}e,\mathrm{d}t)\int_{t_{i}}^{t_{i+1}}f(t,\mathcal{X}_{t}, Y_{t}, Z_{t}, \Gamma_{t})\mathrm{d}t\right]\\
=&\mathbb{E}_{i}\left[\int_{t_{i}}^{t_{i+1}}\int_{E}\gamma(e)\widetilde{\mu}(\mathrm{d}e,\mathrm{d}t)\left(Y_{t_{i+1}}-\widehat{\mathcal{U}}_{i+1}(X_{t_{i+1}})-\mathbb{E}_{i}\left[Y_{t_{i+1}}-\widehat{\mathcal{U}}_{i+1}(X_{t_{i+1}})\right]\right)\right]\\
&+\mathbb{E}_{i}\left[\int_{t_{i}}^{t_{i+1}}\int_{E}\gamma(e)\widetilde{\mu}(\mathrm{d}e,\mathrm{d}t)\int_{t_{i}}^{t_{i+1}}f(t,\mathcal{X}_{t}, Y_{t}, Z_{t}, \Gamma_{t})\mathrm{d}t\right].	
\end{aligned}
\end{eqnarray*}
{Applying the expectation form of the Cauchy–Schwarz inequality, the law of iterated conditional expectations and Lemma 3.1}, these imply for $\mathbb{E}|\overline{Z}_{t_i}-\overline{\widehat{Z}}_{t_i}|^{2}$:
{\begin{eqnarray}\label{a}
\begin{aligned}
&\Delta t_{i}\mathbb{E}|\overline{Z}_{t_i}-\overline{\widehat{Z}}_{t_{i}}|^2 \\
&\quad\leq  2 \Delta t_{i} \mathbb{E} \left|\mathbb{E}_{i}\left[\frac{\Delta W_{t_{i}}}{\Delta t_{i}}\left(Y_{t_{i+1}}-\widehat{\mathcal{U}}_{i+1}(X_{t_{i+1}})-\mathbb{E}_{i}[Y_{t_{i+1}}-\widehat{\mathcal{U}}_{i+1}(X_{t_{i+1}})]\right)\right]\right|^{2}	\\
&\qquad+2 \Delta t_{i} \mathbb{E}\left|\mathbb{E}_{i}\left[\frac{\Delta W_{t_i}}{\Delta t_{i}}\int _{t_{i}}^{t_{i+1}}f(t,\mathcal{X}_{t},Y_{t},Z_{t},\Gamma_{t})\mathrm{d}t\right]\right|^{2}  \\
& \quad\leq 2\Delta t_{i}\mathbb{E}\left[\mathbb{E}_{i}\left|\frac{\Delta W_{t_{i}}}{\Delta_{t_i}}\right|^{2}\mathbb{E}_{i}\left|Y_{t_{i+1}}-\widehat{\mathcal{U}}_{i+1}(X_{t_{i+1}})-\mathbb{E}_{i}\left[Y_{t_{i+1}}-\widehat{\mathcal{U}}_{i+1}(X_{t_{i+1}})\right]\right|^{2}\right]\\
& \qquad +2 (\Delta t_{i})^{2} \mathbb{E}\left[\mathbb{E}_{i}\left|\frac{\Delta W_{t_{i}}}{\Delta_{t_i}}\right|^{2}\mathbb{E}_{i}\left[\int _{t_i}^{t_{i+1}}\left|f(t,\mathcal{X}_{t},Y_{t},Z_{t},\Gamma_{t})\right|^{2}\mathrm{d}t \right]\right]\\
&\quad =2d\left(\mathbb{E}\left|Y_{t_{i+1}}-\widehat{\mathcal{U}}_{i+1}(X_{t_{i+1}})\right|^{2}-\mathbb{E}\left|\mathbb{E}_{i}\left[Y_{t_{i+1}}-\widehat{\mathcal{U}}_{i+1}(X_{t_{i+1}})\right]\right|^{2}  \right)\\
& \qquad +2d \Delta t_{i}\mathbb{E}\left[\int_{t_i}^{t_{i+1}}|f(t,\mathcal{X}_{t},Y_{t},Z_{t},\Gamma_{t})|^{2}\mathrm{d}t\right],
\end{aligned}
\end{eqnarray}}
and for $\mathbb{E}|\overline{\Gamma}_{t_i}-\overline{\widehat{\Gamma}}_{t_i}|^{2}$: {
\begin{eqnarray}\label{b}
\begin{aligned}
	&\Delta t_{i}\mathbb{E}|\overline{\Gamma}_{t_i}-\overline{\widehat{\Gamma}}_{t_{i}}|^2\\
& \leq  2 \Delta t_{i} \mathbb{E} \left|\mathbb{E}_{i}\left[\frac{\int_{t_i}^{t_{i+1}}\int_{E}\gamma(e)\widetilde{\mu}(\mathrm{d}e,\mathrm{d}t)}{\Delta t_{i}}\right.\right.\\
&\left.\left.\quad\times\left(Y_{t_{i+1}}-\widehat{\mathcal{U}}_{i+1}(X_{t_{i+1}})-\mathbb{E}_{i}[Y_{t_{i+1}}-\widehat{\mathcal{U}}_{i+1}(X_{t_{i+1}})]\right)\right]\right|^{2}	\\
&\quad+2 \Delta t_{i} \mathbb{E}\left|\mathbb{E}_{i}\left[\frac{\int_{t_i}^{t_{i+1}}\int_{E}\gamma(e)\widetilde{\mu}(\mathrm{d}e,\mathrm{d}t)}{\Delta t_{i}}\int _{t_{i}}^{t_{i+1}}f(t,\mathcal{X}_{t},Y_{t},Z_{t},\Gamma_{t})\mathrm{d}t\right]\right|^{2}  \\
& \leq 2\Delta t_{i}\mathbb{E}\left[\mathbb{E}_{i}\left|\frac{\int_{t_i}^{t_{i+1}}\int_{E}\gamma(e)\widetilde{\mu}(\mathrm{d}e,\mathrm{d}t)}{\Delta_{t_i}}\right|^{2}\right.\\
&\left.\quad\times\mathbb{E}_{i}\left|Y_{t_{i+1}}-\widehat{\mathcal{U}}_{i+1}(X_{t_{i+1}})-\mathbb{E}_{i}\left[Y_{t_{i+1}}-\widehat{\mathcal{U}}_{i+1}(X_{t_{i+1}})\right]\right|^{2}\right]\\
& \quad +2 (\Delta t_{i})^{2} \mathbb{E}\left[\mathbb{E}_{i}\left|\frac{\int_{t_i}^{t_{i+1}}\int_{E}\gamma(e)\widetilde{\mu}(\mathrm{d}e,\mathrm{d}t)}{\Delta_{t_i}}\right|^{2}\mathbb{E}_{i}\left[\int _{t_i}^{t_{i+1}}\left|f(t,\mathcal{X}_{t},Y_{t},Z_{t},\Gamma_{t})\right|^{2}\mathrm{d}t \right]\right]\\
&=2 \int_{E}\gamma^{2}(e)\lambda(\mathrm{d}e) \left(\mathbb{E}\left|Y_{t_{i+1}}-\widehat{\mathcal{U}}_{i+1}(X_{t_{i+1}})\right|^{2}-\mathbb{E}\left|\mathbb{E}_{i}\left[Y_{t_{i+1}}-\widehat{\mathcal{U}}_{i+1}(X_{t_{i+1}})\right]\right|^{2}  \right)\\
& \quad +2 \Delta t_i \int_{E}\gamma^{2}(e)\lambda(\mathrm{d}e)\mathbb{E}\left[\int_{t_i}^{t_{i+1}}|f(t,\mathcal{X}_{t},Y_{t},Z_{t},\Gamma_{t})|^{2}\mathrm{d}t\right].
\end{aligned}
\end{eqnarray}}
Substituting (\ref{c}), (\ref{h}), (\ref{a}) and (\ref{b}) into (\ref{eq3.9}), and choosing $r=10(d+\int_{E}\gamma^{2}(e)\lambda(de))f^{2}_{L}$, yield
\begin{eqnarray*}\label{eq3.19}
\begin{aligned}
\mathbb{E}\left|{Y_{t_i}}-\widehat{\mathcal{V}}_{t_{i}}\right|^2\leq &C\Delta t_{i}\mathbb{E}\left|Y_{t_i}-\widehat{\mathcal{V}}_{t_{i}}\right|^2+(1+r \Delta t_{i})\mathbb{E}\left|Y_{t_{i+1}}-\widehat{\mathcal{U}}_{i+1}(X_{t_{i+1}})\right|^2+C|\pi|^2\\&+{C}\mathbb{E}\left[\int_{t_{i}}^{t_{i+1}}\left|Z_{t}-{\overline{Z}_{t_{i}}}\right|^2dt\right]
+{C}\mathbb{E}\left[\int_{t_{i}}^{t_{i+1}}\left|\Gamma_{t}-{\overline{\Gamma}_{t_{i}}}\right|^2dt\right]\\&+{C}\mathbb{E}\left[\int_{t_{i}}^{t_{i+1}}\left|Y_{t}-Y_{t_{i}}\right|^2dt\right]+C\Delta t_{i}\mathbb{E}\left[\int_{t_{i}}^{t_{i+1}}{|f(t,X_{t}, Y_{t}, Z_{t}. \Gamma_{t})|^{2}}dt\right],
\end{aligned}
\end{eqnarray*}
Thus for $|\pi|$ small enough, we have
\begin{eqnarray}\label{eq3.14}
\begin{aligned}
\mathbb{E}\left|Y_{t_i}-\widehat{\mathcal{V}}_{t_{i}}\right|^2\leq &(1+C|\pi|)\mathbb{E}\left|Y_{t_{i+1}}-\widehat{\mathcal{U}}_{i+1}(X_{t_{i+1}})\right|^2+C|\pi|^2\\&+C\mathbb{E}\left[\int_{t_{i}}^{t_{i+1}}\left|Z_{t}-{\overline{{Z}}_{t_{i}}}\right|^2dt\right]
+C\mathbb{E}\left[\int_{t_{i}}^{t_{i+1}}\left|\Gamma_{t}-{\overline{{\Gamma}}_{t_{i}}}\right|^2dt\right]\\&+C\mathbb{E}\left[\int_{t_{i}}^{t_{i+1}}\left|Y_{t}-Y_{t_{i}}\right|^2dt\right]+C|\pi|\mathbb{E}\left[\int_{t_{i}}^{t_{i+1}}{|f(t,X_{t}, Y_{t}, Z_{t}, \Gamma_{t})|^{2}}dt\right].
\end{aligned}
\end{eqnarray} 

{\bf Step 2}: By using Young inequality in the form $(a+b)^2\geq (1-|\pi|)a^2+(1-\frac{1}{|\pi|})b^2\geq (1-|\pi|)a^2-\frac{1}{|\pi|}b^2$, we get
\begin{eqnarray}\label{eq3.15}
\begin{aligned}
\mathbb{E}\left|Y_{t_i}-\widehat{\mathcal{V}}_{t_{i}}\right|^2=&\mathbb{E}\left|Y_{t_{i}}-\widehat{\mathcal{U}}_{i}(X_{t_{i}})+\widehat{\mathcal{U}}_{i}(X_{t_{i}})-\widehat{\mathcal{V}}_{t_{i}}\right|^2\\
\geq&(1-|\pi|)\mathbb{E}\left|Y_{t_{i}}-\widehat{\mathcal{U}}_{i}(X_{t_{i}})\right|^2-\frac{1}{|\pi|}\mathbb{E}\left|\widehat{\mathcal{U}}_{i}(X_{t_{i}})-\widehat{\mathcal{V}}_{t_{i}}\right|^2.
\end{aligned}
\end{eqnarray}
Combining (\ref{eq3.14}) and (\ref{eq3.15}) leads to, for $|\pi|$ small enough,
\begin{eqnarray*}
\begin{aligned}
\mathbb{E}\left|Y_{t_{i}}-\widehat{\mathcal{U}}_{i}(X_{t_{i}})\right|^2\leq &(1+C|\pi|)\mathbb{E}\left|Y_{t_{i+1}}-\widehat{\mathcal{U}}_{i+1}(X_{t_{i+1}})\right|^2+C|\pi|^2\\&+C\mathbb{E}\left[\int_{t_{i}}^{t_{i+1}}\left|Z_{t}-{\overline{Z}_{t_{i}}}\right|^2dt\right]
+C\mathbb{E}\left[\int_{t_{i}}^{t_{i+1}}\left|\Gamma_{t}-{\overline{\Gamma}_{t_{i}}}\right|^2dt\right]\\&+C\mathbb{E}\left[\int_{t_{i}}^{t_{i+1}}\left|Y_{t}-Y_{t_{i}}\right|^2dt\right]+CN\mathbb{E}\left|\widehat{\mathcal{V}}_{t_{i}}-\widehat{\mathcal{U}}_{i}(X_{t_{i}})\right|^2\\&+C|\pi|\mathbb{E}\left[\int_{t_{i}}^{t_{i+1}}{|f(t,X_{t}, Y_{t}, Z_{t}, \Gamma_{t})|^{2}}dt\right].
\end{aligned}
\end{eqnarray*}
Then from discrete Gronwall's lemma, and recalling the terminal condition $Y_{t_{N}}=g(X_{T})$, $\widehat{U}(X_{t_{N}})=g(X_{t_{N}})$, we get
\begin{eqnarray}\label{eq3.16}
\begin{aligned}
\max\limits_{i=0,...,N-1}\mathbb{E}\left|Y_{t_{i}}-\widehat{\mathcal{U}}_{i}(X_{t_{i}})\right|^2\leq &{C}\mathbb{E}\left|g(\mathcal{X}_{T})-g(X_{t_{N}})\right|^2+C|\pi|+C\varepsilon^Z(\pi)\\&+C{\varepsilon^{\Gamma}(\pi)}+CN\sum_{i=0}^{N-1}\mathbb{E}\left|\widehat{\mathcal{V}}_{t_{i}}-\widehat{\mathcal{U}}_{i}(X_{t_{i}})\right|^2.
\end{aligned}
\end{eqnarray}


{\bf Step 3}: Fix $ i \in {0,...,N-1} $. Using the relation in the expression of the expected quadratic loss function, and recalling the definition of $\bar{\hat{Z}}_{t_{i}}$ as an $L^2$-projection of $\hat{Z}_{t}$, we have for all parameters $\theta=(\xi, \eta)$ of the neural networks $\mathcal{U}_{i}(.;\xi)$ and $\mathcal{G}_{i}(.,e;\eta)$  with $i=0,\cdots,N-1$
  \begin{equation*}
  \begin{aligned}
  \widehat{L}_{i}(\theta)=\widetilde{L}_{i}(\theta)+\mathbb{E}\left[\int_{t_{i}}^{t_{i+1}}\left|{\widehat{Z}_{t}}-\overline{\widehat{Z}}_{t_{i}}\right|^2\mathrm{d}t\right]+\mathbb{E}\left[\int_{t_{i}}^{t_{i+1}}\int_{E}\left|{\widehat{U}_{t}}(e)-\overline{\widehat{U}}_{t_{i}}(e)\right|^2\lambda(\mathrm{d}e)\mathrm{d}t\right],
  \end{aligned}
  \end{equation*}
  where
   \begin{equation}\label{eq3.17}
  \begin{aligned}
  \widetilde{L}_{i}(\theta):=&\mathbb{E}\left|\widehat{\mathcal{V}}_{t_{i}}-\right. \left.\mathcal{U}_{i}(X_{t_{i}},\xi)\right. \\
  & \left.+\Big[f\Big(t_{i}, X_{t_{i}},\mathcal{U}_{i}(X_{t_{i}}, \xi),{\sigma^{T}(t_{i},X_{t_{i}})D_{x}}\mathcal{U}_{i}(X_{t_{i}},\xi),{\int_{E}\mathcal{G}_{i}(X_{t_{i}},e;\eta)\gamma(e)\lambda(\mathrm{de})}\Big) \right.\\ & \left. -f\left(t_{i}, X_{t_{i}},\widehat{\mathcal{V}}_{t_{i}},\overline{\widehat{Z}}_{t_{i}},\overline{\widehat{\Gamma}}_{t_{i}}\right)\Big]\Delta t_{i}\right|^2+\Delta t_{i}\mathbb{E}\left|\overline{\widehat{Z}}_{t_{i}}-{\sigma^{T}(t_i,X_{t_{i}})D_{x}\mathcal{U}_{i}(X_{t_{i}},\xi)}\right|^2\\
  &+\Delta t_{i}\mathbb{E}\left[{\int_{E}\left(\overline{\widehat{U}}_{t_{i}}(e)-\mathcal{G}_{i}(X_{t_{i}},e;\eta)\right)^2\lambda(\mathrm{d}e)}\right].
  \end{aligned}
  \end{equation}
{By the boundedness of $\gamma(e)$ (see (\ref{eq1.2})) and using Cauchy–Schwarz inequality, we can obtain
\begin{equation}\label{eq3.17b}
\begin{aligned}
&\mathbb{E}\left|\int_{E}\left(\mathcal{G}_{i}(X_{t_{i}},e;\eta)-\overline{\widehat{U}}_{i}(X_{t_i},e)\right)\gamma(e)\lambda(\mathrm{d}e)\right|^2 \\
&\quad \leq  \int_{E}|\gamma(e)|^{2}\lambda(\mathrm{d}e) \cdot \mathbb{E}\left|\int_{E}\left(\mathcal{G}_{i}(X_{t_{i}},e;\eta)-\overline{\widehat{U}}_{i}(X_{t_i},e)\right)^{2}\lambda(\mathrm{d}e)\right|\\
&\quad \leq \int_{E}K_{\gamma}^{2}\lambda(\mathrm{d}e)\mathbb{E}\left|\int_{E}\left(\mathcal{G}_{i}(X_{t_{i}},e;\eta)-\overline{\widehat{U}}_{i}(X_{t_i},e)\right)^{2}\lambda(\mathrm{d}e)\right|.
\end{aligned}
\end{equation}}
And employing Young inequality again in the form $(a+b)^2\leq (1+r\Delta t_{i})a^2+(1+\frac{1}{r\Delta t_{i}})b^2$, together with the Lipschitz condition on $f$ in Assumption \textbf{(H1)}, from (\ref{eq3.17}) and (\ref{eq3.17b}) we obtain
\begin{equation}\label{eq3.18}
\begin{aligned}
  \widetilde{L}_{i}(\theta)\leq&(1+r\Delta t_{i})\mathbb{E}\left|\widehat{\mathcal{V}}_{t_{i}}-\mathcal{U}_{i}(X_{t_{i}},\xi)\right|^2 +3f^2_{L}\Delta t_{i}\left(1+\frac{1}{r\Delta t_{i}}\right)\left(\mathbb{E}\left|\widehat{\mathcal{V}}_{t_{i}}-\mathcal{U}_{i}(X_{t_{i}},\xi)\right|^2 \right.\\
  & \left.+\mathbb{E}\left|{\sigma^{T}(t_i,X_{t_{i}})D_x\mathcal{U}_{i}(X_{t_{i}},\xi)}-\overline{\widehat{Z}}_{t_{i}}\right|^2+{\mathbb{E}\left|\int_{E}\left(\mathcal{G}_{i}(X_{t_{i}},e;\eta)-\overline{\widehat{U}}_{i}(X_{t_i},e)\right)\gamma(e)\lambda(\mathrm{d}e)\right|^2}\right)
  \\&+\Delta t_{i}\mathbb{E}\left|\overline{\widehat{Z}}_{t_{i}}-{\sigma^{T}(t_i,X_{t_{i}})D_x\mathcal{U}_{i}(X_{t_{i}};\xi)}\right|^2+\Delta t_{i}{\mathbb{E}\left[\int_{E}\left(\overline{\widehat{U}}_{i}(X_{t_i},e)-\mathcal{G}_{i}(X_{t_{i}},e;\eta)\right)^2\lambda(\mathrm{d}e)\right]}  \\
\leq&(1+C\Delta t_{i})\mathbb{E}\left|\widehat{\mathcal{V}}_{t_{i}}-\mathcal{U}_{i}(X_{t_{i}},\xi)\right|^2+C\Delta t_{i}\mathbb{E}\left|\overline{\widehat{Z}}_{t_{i}}-{\sigma^{T}(t_{i},X_{t_{i}})D_x\mathcal{U}_{i}(X_{t_{i}},\xi)}\right|^2\\&+C\Delta t_{i}{\mathbb{E}\left[\int_{E}\left(\overline{\widehat{U}}_{i}(X_{t_i},e)-\mathcal{G}_{i}(X_{t_{i}},e;\eta)\right)^2\lambda(\mathrm{d}e)\right]}.
\end{aligned}
\end{equation}
On the other hand, using Young inequality in the form {$(a+b)^2\geq (1-r\Delta t_{i})a^2+\left(1-\frac{1}{r\Delta t_{i}}\right)b^{2} \geq(1-r\Delta t_{i})a^2-\frac{1}{r\Delta t_{i}}b^{2}$}, together with the Lipschitz condition on $f$, and choosing { $r=6 f^{2}_{L}\max \left \{1,\int_{E}K_{\gamma}^{2} \lambda(\mathrm{d}e) \right \} $}, from (\ref{eq3.17}) and (\ref{eq3.17b}), we have
\begin{equation}\label{eq3.19(1)}
\begin{aligned}
  \widetilde{L}_{i}(\theta)\geq&(1-r\Delta t_{i})\mathbb{E}\left|\widehat{\mathcal{V}}_{t_{i}}-\mathcal{U}_{i}(X_{t_{i}};\xi)\right|^2 -\frac{3f^2_{L}\Delta t_{i}}{r}\left(\mathbb{E}\left|\widehat{\mathcal{V}}_{t_{i}}-\mathcal{U}_{i}(X_{t_{i}};\xi)\right|^2 \right.\\
& \left.+\mathbb{E}\left|{\sigma^{T}(t_i,X_{t_{i}})D_x\mathcal{U}_{i}(X_{t_{i}};\xi)}-\overline{\widehat{Z}}_{t_{i}}\right|^2+{\mathbb{E}\left|\int_{E}\left(\mathcal{G}_{i}(X_{t_{i}},e;\eta)-\overline{\widehat{U}}_{i}(X_{t_i},e)\right)\gamma(e)\lambda(\mathrm{d}e)\right|^2}\right) \\&+\Delta t_{i}\mathbb{E}\left|\overline{\widehat{Z}}_{t_{i}}-{\sigma^{T}(t_i,X_{t_{i}})D_{x}\mathcal{U}_{i}(X_{t_{i}};\xi)}\right|^2+\Delta t_{i}{\mathbb{E}\left[\int_{E}\left(\overline{\widehat{U}}_{i}(X_{t_i},e)-\mathcal{G}_{i}(X_{t_{i}},e;\eta)\right)^2\lambda(\mathrm{d}e)\right]}  \\
\geq&(1-C\Delta t_{i})\mathbb{E}\left|\widehat{\mathcal{V}}_{t_{i}}-\mathcal{U}_{i}(X_{t_{i}},\xi)\right|^2+\frac{\Delta t_{i}}{2}\mathbb{E}\left|\overline{\widehat{Z}}_{t_{i}}-{\sigma^{T}(t_i,X_{t_{i}})D_{x}\mathcal{U}_{i}(X_{t_{i}};\xi)}\right|^2\\&+\frac{\Delta t_{i}}{2}{\mathbb{E}\left[\int_{E}\left(\overline{\widehat{U}}_{i}(X_{t_i},e)-\mathcal{G}_{i}(X_{t_{i}},e;\eta)\right)^2\lambda(\mathrm{d}e)\right]}.
\end{aligned}
\end{equation}\\


{\bf Step 4}: For simplicity of notation, we assume $d=1$, from (\ref{eq3.6}) and the Euler scheme (\ref{eq2.8}), by setting $X_{t_i}=x$, we have
\begin{align*}
    \widehat{v}_i(x) &= \widetilde{v}_i(x) + f\left(t_i, x, \widehat{v}_i(x), \overline{\widehat{z}}_i(x), \overline{\widehat{\mathcal{T}}}_i(x)\right) \Delta t_i, \quad
    \widetilde{v}_i(x) := \mathbb{E}\left[\widehat{\mathcal{U}}_{i+1}(X_{t_{i+1}}^{x})\right], \\
    \overline{\widehat{z}}_i(x) &= \frac{1}{\Delta t_i} \mathbb{E}\left[\widehat{\mathcal{U}}_{i+1}(X_{t_{i+1}}^x) \Delta W_{t_i}\right], \quad
    \overline{\widehat{\mathcal{T}}}_i(x) = \frac{1}{\Delta t_i} \mathbb{E}\left[\widehat{\mathcal{U}}_{i+1}(X_{t_{i+1}}^x) \int_{t_i}^{t_{i+1}} \int_E \gamma(e) \widetilde{\mu}(\mathrm{d}e, \mathrm{d}t)\right], \\
    X_{t_{i+1}}^x &= x + {b(t_i, x)} \Delta t_i + {\sigma(t_i, x)} \Delta W_{t_i} + \int_{t_i}^{t_{i+1}} \int_E {\beta(t_i, x, e)} \widetilde{\mu}(\mathrm{d}e, \mathrm{d}t).
\end{align*}
{ Applying the Malliavin integration-by-parts formula to $\overline{\widehat{z}}_{i}(x)$ , we have}
\begin{eqnarray*}
\begin{aligned}
\overline{\widehat{z}}_{i}(x)=&\sigma(t_{i},x)\mathbb{E}\left[D_{x}\widehat{\mathcal{U}}_{i+1}(X_{t_{i+1}}^{x})\right],
\end{aligned}	
\end{eqnarray*}
{and applying the Malliavin integration-by-parts formula for Poisson random measures to $\overline{\widehat{\mathcal{T}}}_{i}(x)$, we obtain
\begin{equation*}
\overline{\widehat{\mathcal{T}}}_{i}(x)=\frac{1}{\Delta_{t_i}}\mathbb{E}\left[\int_{t_i}^{t_{i+1}}\int_{E}\left(\widehat{\mathcal{U}}_{i+1}\left(X_{t_{i+1}}^{x}+\beta(t_i,x,e)\right)-\widehat{\mathcal{U}}_{i+1}\left(X_{t_{i+1}}^x\right)\right)\gamma(e)\lambda(\mathrm{d}e)\mathrm{d}t\right].
\end{equation*}}
Under Assumption \textbf{(H2)}, recalling that $\widehat{\mathcal{U}}_{i+1}(\cdot)=\mathcal{U}_{i+1}(\cdot,\theta^{\star})$ is $C^2$ with bounded derivatives, { by applying the Malliavin integration-by-parts formula, we can obtain that $\widetilde{v}_{i}(x)$ is $C^{1}$ with}
\begin{align}\label{eq3.21}
\begin{aligned}
D_{x}\widetilde{{v}}_{i}(x)
&= \mathbb{E}\left[
\left(1 + b_{x}(t_{i},x)\Delta t_{i} + \sigma_{x}(t_{i},x)\Delta W_{t_{i}}
+ \int_{t_{i}}^{t_{i+1}}\!\!\int_{E}\beta_{x}(t_i,x,e)\widetilde{\mu}(\mathrm{d}e,\mathrm{d}t)\right)
D_{x}\widehat{\mathcal{U}}_{i+1}(X_{t_{i+1}}^{x})
\right] \\
&= \mathbb{E}\left[D_{x}\widehat{\mathcal{U}}_{i+1}(X_{t_{i+1}}^{x})\right]
+ b_{x}(t_{i},x)\mathbb{E}\left[D_{x}\widehat{\mathcal{U}}_{i+1}(X_{t_{i+1}}^{x})\right]\Delta t_{i} \\
&\quad + \sigma(t_{i},x)\sigma_{x}(t_{i},x)\mathbb{E}\left[D^2_{x}\widehat{\mathcal{U}}_{i+1}(X_{t_{i+1}}^{x})\right]\Delta t_{i} \\
&\quad +{ \mathbb{E}\left[\int_{t_i}^{t_{i+1}}\int_{E}\left(D_{x}{\widehat{\mathcal{U}}}_{i}(X_{t_{i+1}}^{x}+\beta(t_i,x,e))-D_{x}{\widehat{\mathcal{U}}}(X_{t_{i+1}}^{x})\right)\beta_{x}(t_i,x,e)\lambda(\mathrm{d}e)\mathrm{d}t \right]}.
\end{aligned}
\end{align}

{ Next, we focus on $D_{x}\overline{\widehat{z}}_{i}(x)$ and $D_{x}\overline{\widehat{\mathcal{T}}}_{i}(x)$. Firstly, we take the derivative of $\overline{\widehat{z}}_{i}(x)$ with respect to $x$}
\begin{equation*}
\begin{aligned}
&D_x \overline{\widehat{z}}_{i}(x)\\
&= \sigma_x(t_i,x) \mathbb{E}\bigl[ D_x \widehat{\mathcal{U}}_{i+1}(X_{t_{i+1}}^x) \bigr]
+ \sigma(t_i,x) \mathbb{E}\biggl[ \Bigl( 1 + b_{x}(t_i,x)\Delta t_i
+ \sigma_{x}(t_i,x)\Delta W_{t_i} \\
&\quad + \int_{t_i}^{t_{i+1}}\int_E \beta_x(t_i,x,e) \widetilde{\mu}(\mathrm{d}e,\mathrm{d}t) \Bigr) D_x^2 \widehat{\mathcal{U}}_{i+1}(X_{t_{i+1}}^x) \biggr] \\
&= \sigma_x(t_i,x) \mathbb{E}\left[ D_x \widehat{\mathcal{U}}_{i+1}(X_{t_{i+1}}^x) \right]
+ \sigma(t_i,x) \mathbb{E}\left[ D_x^2 \widehat{\mathcal{U}}_{i+1}(X_{t_{i+1}}^x) \right] \\
&\quad + \sigma(t_i,x) b_{x}(t_i,x) \mathbb{E}\left[ D_x^2 \widehat{\mathcal{U}}_{i+1}(X_{t_{i+1}}^x) \right] \Delta t_i +\sigma^2(t_i,x) \sigma_{x}(t_i,x) \mathbb{E}\left[ D_x^3 \widehat{\mathcal{U}}_{i+1}(X_{t_{i+1}}^x) \right] \Delta t_i \\
&\quad + {\sigma(t_i,x) \mathbb{E}\left[\int_{t_i}^{t_{i+1}}\int_E \left(D_{x}^{2}\widehat{\mathcal{U}}_{i+1}(X_{t_{i+1}}^{x}+\beta(t_i,x,e))-D_{x}^{2}\widehat{\mathcal{U}}_{i+1}(X_{t_{i+1}}^{x})\right) \beta_x(t_i,x,e) \lambda(\mathrm{d}e) \mathrm{d}t \right]} .
\end{aligned}
\end{equation*}
{Applying the Cauchy–Schwarz inequality to the last term of the above equation
\begin{equation*}
\begin{aligned}
&\left|\sigma(t_i,x) \mathbb{E}\left[\int_{t_i}^{t_{i+1}}\int_E \left(D_{x}^{2}\widehat{\mathcal{U}}_{i+1}(X_{t_{i+1}}^{x}+\beta(t_i,x,e))-D_{x}^{2}\widehat{\mathcal{U}}_{i+1}(X_{t_{i+1}}^{x})\right) \beta_x(t_i,x,e) \lambda(\mathrm{d}e) \mathrm{d}t \right]\right|^{2}\\
&\leq \Delta {t_i}|\sigma(t_i,x)|^{2} \mathbb{E}\left[\int_{t_i}^{t_{i+1}}\int_E \left|D_{x}^{2}\widehat{\mathcal{U}}_{i+1}(X_{t_{i+1}}^{x}+\beta(t_i,x,e))-D_{x}^{2}\widehat{\mathcal{U}}_{i+1}(X_{t_{i+1}}^{x})\right|^{2} |\beta_x(t_i,x,e)|^{2} \lambda(\mathrm{d}e) \mathrm{d}t \right],
\end{aligned}
\end{equation*}}
and under Assumption \textbf{(H2)}(i), by the linear growth condition of $\sigma$ and the bounds on the derivatives of the neural networks in $\mathcal{N}\mathcal{N}^{\varrho}_{d,1,2,m}(\Theta_{m}^{\gamma})$ in (\ref{eq2.7}), we obtain
{\begin{equation}\label{eq3.22a}
\left|D_{x}\overline{\widehat{z}}_{i}(x)\right|^{2}\leq  C\left(\gamma_{m}^{6}+\gamma_{m}^{8}|\pi|^{2}\right) .
\end{equation}}

Then, taking the derivative of $\overline{\widehat{\mathcal{T}}}_{i}(x)$ with respect to $x$, we have
{
\begin{equation*}
\begin{aligned}
&D_x \overline{\widehat{\mathcal{T}}}_{i}(x)\\
&=\frac{1}{\Delta t_i} \mathbb{E}\biggl[\int_{t_i}^{t_{i+1}}\int_{E}\biggl\{ \left(D_{x}\widehat{\mathcal{U}}_{i+1}(X_{t_{i+1}}^{x}+\beta(t_i,x,e))-D_{x}\widehat{\mathcal{U}}_{i+1}(X_{t_{i+1}}^{x})\right)\\
&\quad\cdot\Bigl( 1 + b_{x}(t_i,x)\Delta t_i
+ \sigma_{x}(t_i,x)\Delta W_{t_i} +\int_{t_i}^{t_{i+1}}\int_E \beta_x(t_i,x,e) \widetilde{\mu}(\mathrm{d}e,\mathrm{d}t) \Bigr)\\
&\quad+D_{x}\widehat{\mathcal{U}}_{i+1}(X_{t_{i+1}}^{x}+\beta(t_i,x,e))\beta_{x}(t_i,x,e)\bigg\} \gamma(e)\lambda(\mathrm{d}e)\mathrm{d}t \biggr] \\
&=\frac{1}{\Delta {t_i}}\mathbb{E}\bigg[\int_{t_i}^{t_{i+1}}\int_{E}\Big(D_{x}\widehat{\mathcal{U}}_{i+1}(X_{t_{i+1}}^{x}+\beta(t_i,x,e))-D_{x}\widehat{\mathcal{U}}_{i+1}(X_{t_{i+1}}^{x}) \\
&\quad + D_{x}\widehat{\mathcal{U}}_{i+1}(X_{t_{i+1}}^{x}+\beta(t_i,x,e))\beta_{x}(t_i,x,e)\Big)\gamma(e)\lambda(\mathrm{d}e)\mathrm{d}t \bigg]\\
&\quad +\mathbb{E}\left[\int_{t_i}^{t_{i+1}}\int_{E}\left(D_{x}\widehat{\mathcal{U}}_{i+1}(X_{t_{i+1}}^{x}+\beta(t_i,x,e))-D_{x}\widehat{\mathcal{U}}_{i+1}(X_{t_{i+1}}^{x})\right)b_{x}(t_i,x)\gamma(e)\lambda(\mathrm{d}e)\mathrm{d}t \right]\\
&\quad +\mathbb{E}\left[\int_{t_i}^{t_{i+1}}\int_{E}\left(D_{x}^{2}\widehat{\mathcal{U}}_{i+1}(X_{t_{i+1}}^{x}+\beta(t_i,x,e))-D_{x}^{2}\widehat{\mathcal{U}}_{i+1}(X_{t_{i+1}}^{x})\right)\sigma(t_i,x)\sigma_{x}(t_i,x)\gamma(e)\lambda(\mathrm{d}e)\mathrm{d}t \right]\\
&\quad +\mathbb{E}\bigg[\int_{E}\beta_{x}(t_i,x,e)\lambda(\mathrm{d}e)\int_{t_i}^{t_{i+1}}\int_{E}\Big(D_{x}\widehat{\mathcal{U}}_{i+1}(X_{t_{i+1}}^{x}+2\beta(t_i,x,e))\\
&\quad-2D_{x}\widehat{\mathcal{U}}_{i+1}(X_{t_{i+1}}^{x}+\beta(t_i,x,e))+D_{x}\widehat{\mathcal{U}}_{i+1}(X_{t_{i+1}}^{x})\Big)\gamma(e)\lambda(\mathrm{d}e)\mathrm{d}t\bigg].
\end{aligned}
\end{equation*}}
{ Similarly to the approach to obtain (\ref{eq3.22a}), applying the Cauchy–Schwarz inequality to the terms of the above equation on $D_x \overline{\widehat{\mathcal{T}}}_{i}(x)$, and then under Assumption \textbf{(H2)}(i), by the linear growth condition of $\sigma$, the boundedness of $\gamma(e)$  (see \ref{eq1.2}), and the bounds on the derivatives of the neural networks in $\mathcal{N}\mathcal{N}^{\varrho}_{d,1,2,m}(\Theta_{m}^{\gamma})$ (see \ref{eq2.7}), we obtain
 \begin{equation}\label{eq3.22b}
\left|D_{x}\overline{\widehat{\mathcal{T}}}_{i}(x)\right|^{2}\leq C\left(\gamma_{m}^{4}+\gamma_{m}^{6}|\pi|^{2}\right).
\end{equation}}

We set $ \widehat{f}_{i}(x)=f\left(t_{i}, x, \widehat{v}_{i}(x),\overline{\widehat{z}}_{i}(x),\overline{\widehat{\mathcal{T}}}_{i}(x)\right)	$. Then
{ it follows by the implicit function theorem, for $|\pi|$ small enough, that $\widehat{v}_{i}(x)$ is $C^{1}$ with derivative given by}
\begin{eqnarray*}
\begin{aligned}
D_{x}\widehat{v}_{i}(x)&=D_{x}\widetilde{{v}}_{i}(x)+\Delta t_{i}\left(D_{x}\widehat{f}_{i}(x)+D_{y}\widehat{f}_{i}(x)D_{x}\widehat{v}_{i}(x)+D_{z}\widehat{f}_{i}(x)D_{x}\overline{\widehat{z}}_{i}(x)+D_{\Gamma}\widehat{f}_{i}(x)D_{x}\overline{\widehat{\mathcal{T}}}_{i}(x)\right).
\end{aligned}
\end{eqnarray*}
{ By (\ref{eq3.21}) and multiplying $\sigma(t_{i},x)$ on both sides (see \cite{CHX}), we have}
\begin{eqnarray*}
\begin{aligned}
&\left(1-\Delta t_{i}D_{y}\hat{f}_{i}(x)\right)\sigma(t_i,x)D_{x}\widehat{v}_{i}(x)\\
&\quad=\overline{\widehat{z}}_{i}(x)+\Delta t_{i}\sigma(t_i,x)\left(D_{x}\widehat{f}_{i}(x)+D_{z}\widehat{f}_{i}(x)D_{x}\overline{\widehat{z}}_{i}(x)+D_{\Gamma}\widehat{f}_{i}(x)D_{x}\overline{\widehat{\mathcal{T}}}_{i}(x)\right)\\
&\qquad+\Delta{t_i}\sigma(t_i,x)\left(b_{x}(t_i,x)\mathbb{E}\left[D_{x}\widehat{\mathcal{U}}_{i+1}(X_{t_{i+1}}^{x})\right]+\sigma(t_i,x)\sigma_{x}(t_i,x)\mathbb{E}\left[D_{x}^{2}\widehat{\mathcal{U}}_{i+1}(X_{t_{i+1}}^{x})\right]\right)\\
&\qquad +{\sigma(t_i,x)\mathbb{E}\left[\int_{t_i}^{t_{i+1}}\int_{E}\left(D_{x}\widehat{\mathcal{\mathcal{U}}}_{i+1}(X_{t_{i+1}}^{x}+\beta(t_i,x,e))-D_{x}\widehat{\mathcal{\mathcal{U}}}_{i+1}(X_{t_{i+1}}^{x})\right)\beta_{x}(t_i,x,e)\lambda(\mathrm{d}e)\mathrm{d}t \right]}.
\end{aligned}
\end{eqnarray*}
{ Under Assumption \textbf{(H2)}(ii), by the linear growth condition of $\sigma$ and (\ref{eq3.21}),(\ref{eq3.22a}) and (\ref{eq3.22b}), we have}
{\begin{eqnarray*}
\begin{aligned}
	\mathbb{E}\left|\sigma(t_i,X_{t_i})D_{x}\widehat{v}_{{i}}(X_{t_i})-\overline{\widehat{Z}}_{t_{i}}\right|^2=C\left(\gamma_{m}^{4}+\gamma^6_{m}+\gamma^8_{m}|\pi|^2\right)|\pi|^2.
\end{aligned}
\end{eqnarray*}}
We should note that the above arguments can be extended to the $d-$ dimensional $(d>1)$ setting.

Fix $i\in{0,...N-1}$, and take $\theta^{\star}_{i}=(\xi^{\star}_{i},\eta^{\star}_{i})\in argmin_{\theta}\widehat{L}_{i}(\theta)$, so that $\widehat{\mathcal{U}}_{i}(X_{t_i})=\mathcal{U}_{i}(X_{t_{i}}; \xi^{\star}_{i}),\widehat{Z}_{i}(X_{t_i})=\sigma^{T}(t_i,X_{t_{i}})D_{x}\mathcal{U}_{i}(X_{t_{i}},\xi^{\star}_{i})$ and { $\widehat{\mathcal{G}}_{i}(X_{t_i},e)=\mathcal{G}(X_{t_{i}},e;\eta^{\star}_{i})$}. We then have for all $\theta=(\xi,\eta)$,
\begin{equation*}
\begin{aligned}
 (1-C\Delta t_{i})&E|\widehat{\mathcal{V}}_{t_{i}}-\widehat{\mathcal{U}}_{i}(X_{t_{i}})|^2+\frac{\Delta t_{i}}{2}E\left|\overline{\widehat{Z}}_{t_{i}}-\widehat{\mathcal{Z}}_{i}(X_{t_{i}})\right|^2\\
 &+\frac{\Delta t_{i}}{2}{\mathbb{E}\left[\int_{E}\left(\overline{\widehat{U}}_{i}(X_{t_i},e)-\widehat{\mathcal{G}}_{i}(X_{t_i},e)\right)^2\lambda(\mathrm{d}e)\right]}\leq \widetilde{L}_{i}(\theta^{\star}_{i})\leq \widetilde{L}_{i}(\theta)\\
 \leq&(1+C\Delta t_{i})\mathbb{E}|\widehat{\mathcal{V}}_{t_{i}}-\mathcal{U}_{i}(X_{t_{i}},\xi)|^2+C\Delta t_{i}\mathbb{E}\left|\overline{\widehat{Z}}_{t_{i}}-\sigma^{T}(t_i,X_{t_{i}})D_{x}\mathcal{U}_{i}(X_{t_{i}},\xi)\right|^2\\&+C\Delta t_{i}{\mathbb{E}\left[\int_{E}\left(\overline{\widehat{U}}_{i}(X_{t_i},e)-\mathcal{G}_{i}(X_{t_{i}},e;\eta)\right)^2\lambda(\mathrm{d}e)\right]}.
  \end{aligned}
\end{equation*}
It is straightforward to obtain
\begin{equation*}
\begin{aligned}
&\Delta t_i \mathbb{E}\left|\overline{\widehat{Z}}_{t_i}-\sigma^{T}(t_i,X_{t_i})D_{x}\widehat{\mathcal{U}}_{i}(X_{t_i};\xi)\right|^{2} \\
&\qquad \leq 2\Delta t_i \mathbb{E}\left|\overline{\widehat{Z}}_{t_i}-\sigma^{T}(t_i,X_{t_i})D_{x}\widehat{v}_{i}(X_{t_i})\right|^{2}+2\Delta t_{i} \mathbb{E}\left|\sigma^{T}(t_i,X_{t_i})D_{x}\widehat{\mathcal{U}}_{i}(X_{t_i},\xi)-\sigma^{T}(t_i,X_{t_i})D_{x}\widehat{\mathcal{V}}_{t_i}\right|^{2}.
\end{aligned}
\end{equation*}
As a consequence, we have, for $|\pi|$ small enough,
\begin{equation}\label{eq3.23}
\begin{aligned}
&\mathbb{E}|\widehat{\mathcal{V}}_{t_{i}}-\widehat{\mathcal{U}}_{i}(X_{t_{i}})|^2+\Delta t_{i}\mathbb{E}\left|\overline{\widehat{Z}}_{t_{i}}-\widehat{\mathcal{Z}}_{i}(X_{t_{i}})\right|^2+\Delta t_{i}\mathbb{E}\left[\int_{E}\left(\overline{\widehat{U}}_{i}(X_{t_i},e)-{\widehat{\mathcal{G}}_{i}(X_{t_i},e)}\right)^2\lambda(\mathrm{d}e)\right]\\
&\leq C \left[ \varepsilon_{i}^{\mathcal{N},v}+|\pi|\varepsilon_{i}^{\mathcal{N},\Gamma}+{(\gamma_{m}^{4}+\gamma^6_{m}+\gamma^8_{m} |\pi|^2)}|\pi|^3\right].
\end{aligned}
\end{equation}
Then, by substituting (\ref{eq3.23}) into (\ref{eq3.16}), for all $\theta=(\xi,\eta)$, we have
\begin{equation}\label{eq3.24}
\begin{aligned}
&\max\limits_{i=0,...,N-1}\mathbb{E}\left|Y_{t_{i}}-\widehat{\mathcal{U}}_{i}(X_{t_{i}})\right|^2\\
&\qquad\leq C\left(E|g(\mathcal{X}_{T})-g(X_{T})|^2+{ \frac{\gamma_{m}^{4}+\gamma_{m}^{6}+\gamma_{m}^{8}|\pi|^{2}}{N}}+\varepsilon^{Z}(\pi)+\varepsilon^{\Gamma}(\pi)+
{\sum_{i=0}^{N-1}\left(N\varepsilon^{\mathcal{N},v}_{i}+\varepsilon^{\mathcal{N},\Gamma}_{i}\right)}\right).
\end{aligned}
\end{equation} \\


{\bf Step 5}: Now we prove the convergence of $Z$ and $\Gamma$. According to the $L^2$ regularity of $Z$ and $\Gamma$, we have the following two inequalities for $Z$ and $\Gamma$
\begin{eqnarray}\label{eq3.25}
\begin{aligned}
&\mathbb{E}\left[\int_{t_{i}}^{t_{i+1}}\left|Z_{t}-\overline{\widehat{Z}}_{t_{i}}\right|^2dt\right]\\
\quad\leq&  2d|\pi|\mathbb{E}\left[\int_{t_{i}}^{t_{i+1}}{|f(t,X_{t}, Y_{t}, Z_{t}, \Gamma_{t})|^{2}}dt\right]+ 2\mathbb{E}\left[\int_{t_{i}}^{t_{i+1}}\left|Z_{t}-\overline{{Z}}_{t_{i}}\right|^2dt\right]\\
&+ 2d\left( \mathbb{E}\left|Y_{t_{i+1}}-\widehat{\mathcal{U}}_{i+1}(X_{t_{i+1}})\right|^2-\mathbb{E}\left|\mathbb{E}_{i}\left[Y_{t_{i+1}}-\widehat{\mathcal{U}}_{i+1}(X_{t_{i+1}})\right]\right|^2\right),
\end{aligned}
\end{eqnarray}
and
\begin{eqnarray}\label{eq3.26}
\begin{aligned}
&\mathbb{E}\left[\int_{t_{i}}^{t_{i+1}}\left|\Gamma_{t}-\overline{\widehat{\Gamma}}_{t_{i}}\right|^2dt \right] \\
\qquad\leq& 2|\pi| \int_{E}\gamma^2(e)\lambda(de)\mathbb{E}\left[\int_{t_{i}}^{t_{i+1}}{|f(t,X_{t}, Y_{t}, Z_{t}, \Gamma_{t})|^{2}}dt\right]+2\mathbb{E}\left[\int_{t_{i}}^{t_{i+1}}\left|\Gamma_{t}-\overline{\Gamma}_{t_{i}}\right|^2dt \right]\\
&+2\int_{E}\gamma^2(e)\lambda(de)\left( \mathbb{E}\left|Y_{t_{i+1}}-\widehat{\mathcal{U}}_{i+1}(X_{t_{i+1}})\right|^2-\mathbb{E}\left|\mathbb{E}_{i}\left[Y_{t_{i+1}}-\widehat{\mathcal{U}}_{i+1}(X_{t_{i+1}})\right]\right|^2\right).
\end{aligned}
\end{eqnarray}
{ Summing from $0$ to $N-1$ over $i$ for equations (\ref{eq3.25}) and (\ref{eq3.26}), and adding the two sums together lead to}
{
\begin{equation}\label{eq3.27}
\begin{aligned}
&\mathbb{E}\left[\sum_{i=0}^{N-1}\int_{t_{i}}^{t_{i+1}}\left|Z_{t}-\overline{\widehat{Z}}_{t_{i}}\right|^2\mathrm{d}t\right] + \mathbb{E}\left[\sum_{i=0}^{N-1}\int_{t_{i}}^{t_{i+1}}\left|\Gamma_{t}-\overline{\widehat{\Gamma}}_{t_{i}}\right|^2\mathrm{d}t \right]\\
\quad &\leq 2 C|\pi|+\varepsilon^{Z}(\pi)+\varepsilon^{\Gamma}(\pi)+\left(2d+2\int_{E}\gamma^{2}(e)\lambda(\mathrm{d}e)\right)\mathbb{E}|g(\mathcal{X}_{T})-g(X_{T})|^{2}\\
&\quad +\left(2d+2\int_{E}\gamma^{2}(e)\lambda(\mathrm{d}e)\right)\sum_{i=0}^{N-1}\left( \mathbb{E}\left|Y_{t_{i}}-\widehat{\mathcal{U}}_{i}(X_{t_{i}})\right|^2-\mathbb{E}\left|\mathbb{E}_{i}\left[Y_{t_{i+1}}-\widehat{\mathcal{U}}_{i+1}(X_{t_{i+1}})\right]\right|^2\right).
\end{aligned}
\end{equation}}
Moreover, by combining (\ref{eq3.15}) and (\ref{eq3.9}), we have
\begin{eqnarray*}
\begin{aligned}
&{\left(2d+2\int_{E}\gamma^{2}(e)\lambda(\mathrm{d}e)\right)}\left( \mathbb{E}\left|Y_{t_{i}}-\widehat{\mathcal{U}}_{i}(X_{t_{i}})\right|^2-\mathbb{E}\left|\mathbb{E}_{i}\left[Y_{t_{i+1}}-\widehat{\mathcal{U}}_{i+1}(X_{t_{i+1}})\right]\right|^2\right)\\
\qquad \leq&  \left(\frac{1+r |\pi|}{1-|\pi|}-1\right){\left(2d+2\int_{E}\gamma^{2}(e)\lambda(\mathrm{d}e)\right)} \mathbb{E}\left| \mathbb{E}_{i}\left[Y_{t_{i+1}}-\widehat{\mathcal{U}}_{i+1}(X_{t+1})\right]\right|^2\\
& +\frac{{(10d+10\int_{E}\gamma^{2}(e)\lambda(\mathrm{d}e))}f_{L}^2}{r}\frac{1+r |\pi|}{1-|\pi|}\left \{C|\pi|^2+2\mathbb{E}\left[\int_{t_{i}}^{t_{i+1}}\left|Y_{t}-Y_{t_{i}}\right|^2\mathrm{d}t\right]\right. \\
&+\left. 2|\pi|\mathbb{E}\left|Y_{t}-\widehat{\mathcal{V}}_{t_{i}}\right|^2+\mathbb{E}\left[\int_{t_{i}}^{t_{i+1}}\left|Z_{t}-\overline{\widehat{Z}}_{t_{i}}\right|^2\mathrm{d}t\right]
+\mathbb{E}\left[\int_{t_{i}}^{t_{i+1}}\left|\Gamma_{t}-\overline{\widehat{\Gamma}}_{t_{i}}\right|^2\mathrm{d}t\right]\right\}\\
&+\frac{{\left(2d+2\int_{E}\gamma^{2}(e)\lambda(\mathrm{d}e)\right)}}{(1-|\pi|)|\pi|}\mathbb{E}|\widehat{\mathcal{V}}_{t_{i}}-\widehat{\mathcal{U}}_{i}(X_{t_{i}})|^2.
\end{aligned}
\end{eqnarray*}
{Taking $r=\left(30d+30\int_{E}\gamma^{2}(e)\lambda(de)\right)f_{L}^2$ for the above inequality, so that $\frac{\left(10d+10\int_{E}\gamma^{2}(e)\lambda(de)\right)f_{L}^2}{r}(1+r|\pi|)/(1-|\pi|)<1/2$ for $|\pi|$ small enough}, and by plugging into (\ref{eq3.25}) and (\ref{eq3.27}), we have, in view of $(1+r|\pi|)/(1-|\pi|)-1< O(|\pi|)$,
{
\begin{equation} \label{eq3.28}
 \begin{aligned}
&\mathbb{E}\left[\sum_{i=0}^{N-1}\int_{t_{i}}^{t_{i+1}}\left|Z_{t}-\overline{\widehat{Z}}_{t_{i}}\right|^2\mathrm{d}t\right] + \mathbb{E}\left[\sum_{i=0}^{N-1}\int_{t_{i}}^{t_{i+1}}\left|\Gamma_{t}-\overline{\widehat{\Gamma}}_{t_{i}}\right|^2\mathrm{d}t \right]\\
&\quad \leq C\bigg\{\varepsilon^{Z}(\pi)+\varepsilon^{\Gamma}(\pi)+|\pi|+\mathbb{E}|g(\mathcal{X}_{T})-g(X_{T})|^{2}  \\
&\qquad+N\sum_{i=0}^{N-1}\mathbb{E}|\widehat{\mathcal{U}}_{i}(X_{t_{i}})-\widehat{\mathcal{V}}_{t_{i}}|^{2}+|\pi|\sum_{i=0}^{N-1}\mathbb{E}|Y_{t_i}-\widehat{\mathcal{V}}_{t_{i}}|^{2}\\
&\qquad + |\pi|\sum_{i=0}^{N-1}\mathbb{E}\left|\mathbb{E}[Y_{t_{i+1}}-\widehat{\mathcal{U}}_{i+1}(X_{t_{i+1}})] \right|^{2}\bigg\}.
\end{aligned}
\end{equation}}
{By (\ref{eq3.14}) and the $L^{2}$ regularity of $Y$, $Z$ and $\Gamma$, we have
\begin{equation}\label{eq3.29}
|\pi|\sum_{i=0}^{N-1}\mathbb{E}|Y_{t_{i}}-{\mathcal{V}_{t_{i}}}|^{2}
\leq C \max_{i=0,\cdots,N} \mathbb{E}|Y_{t_{i}}-\widehat{\mathcal{U}}_{i}(X_{t_{i}})|^{2}.
\end{equation}}
{In view of the properties of mathematical expectation, we get
 \begin{equation}\label{eq3.30}
\begin{aligned}
|\pi| \sum_{i=0}^{N-1}\mathbb{E}\left|\mathbb{E}[Y_{t_{i+1}}-\widehat{\mathcal{U}}_{i+1}(X_{t_{i+1}})]  \right|^{2}&\leq |\pi|\sum_{i=1}^{N-1}\mathbb{E}\left|Y_{t_{i+1}}-\widehat{\mathcal{U}}_{i+1}(X_{t_{i+1}})\right|^{2}\\
&\leq C \max_{i=0,\cdots,N} \mathbb{E}\left|Y_{t_{i}}-\widehat{\mathcal{U}}_{i}(X_{t_i})\right|^{2}.
\end{aligned}
\end{equation}}
{ By combining (\ref{eq3.23}), (\ref{eq3.24}), (\ref{eq3.28}), (\ref{eq3.29}) and (\ref{eq3.30})}, {we have
\begin{equation}\label{eq3.31}
\begin{aligned}
&\mathbb{E}\left[\sum_{i=0}^{N-1}\int_{t_{i}}^{t_{i+1}}\left|Z_{t}-\overline{\widehat{Z}}_{t_{i}}\right|^2dt\right] + \mathbb{E}\left[\sum_{i=0}^{N-1}\int_{t_{i}}^{t_{i+1}}\left|\Gamma_{t}-\overline{\widehat{\Gamma}}_{t_{i}}\right|^2dt \right] \\
& \quad \leq C \left[\varepsilon^{Z}(\pi)+\varepsilon^{\Gamma}(\pi)+\frac{\gamma_{m}^{4}+\gamma_{m}^{6}+\gamma_{m}^{8}|\pi|^{2}}{N}
+\mathbb{E}\left|g(\mathcal{X}_{T})-g(X_{T})\right|^{2}+\sum_{i=0}^{N-1}(N\varepsilon_{i}^{\mathcal{N},v}+\varepsilon_{i}^{\mathcal{N},\Gamma})\right].
\end{aligned}
\end{equation}}

It is straightforward to obtain the following two inequalities for $Z$ and $\Gamma$
\begin{equation}\label{eq3.32}
\begin{aligned}
&\mathbb{E}\left[\int_{t_{i}}^{t_{i+1}}|Z_{t}-\widehat{{Z}}_{i}(X_{t_{i}})|^2dt \right]\\
&\qquad \leq 2\mathbb{E}\left[\int_{t_{i}}^{t_{i+1}}\left|Z_{t}-\overline{\widehat{Z}}_{t_i}\right|^2 dt\right]+{2 \Delta t_{i} \mathbb{E}\left[\left|\overline{\widehat{Z}}_{t_{i}}-\widehat{Z}_{i}(X_{t_{i}})\right|^2\right]},
\end{aligned}
\end{equation}
and
\begin{equation}\label{eq3.33}
\begin{aligned}
&\mathbb{E}\left[\int_{t_{i}}^{t_{i+1}}|\Gamma_{t}-\widehat{\mathcal{T}}_{i}(X_{t_{i}})|^2 \mathrm{d}t\right]\\
 & \qquad \leq 2\mathbb{E}\left[\int_{t_{i}}^{t_{i+1}}\left|\Gamma_{t}-\overline{\widehat{\Gamma}}_{t_{i}}\right|^2\mathrm{d}t\right]+{2 \Delta t_{i}\mathbb{E}\left[\left|\overline{\widehat{\Gamma}}_{t_{i}}-\widehat{\mathcal{T}}_{i}(X_{t_{i}})\right|^2\right]}\\
 &\qquad {\leq 2\mathbb{E}\left[\int_{t_{i}}^{t_{i+1}}\left|\Gamma_{t}-\overline{\widehat{\Gamma}}_{t_{i}}\right|^2\mathrm{d}t\right] + 2C \Delta t_{i}\mathbb{E}\left[\int_{E}\left(\overline{\widehat{U}}_{i}(X_{t_i},e)-\widehat{\mathcal{G}}_{i}(X_{t_{i}},e)\right)^2\lambda(\mathrm{d}e)\right]}
\end{aligned}
\end{equation}
{Summing from $0$ to $N-1$ about $i$ for (\ref{eq3.32}) and (\ref{eq3.33}), { by combining} (\ref{eq3.31}) and (\ref{eq3.23}), we can prove the convergence of $Z$ and $\Gamma$:}
\begin{equation*}
\begin{aligned}
&\mathbb{E}\left[\sum_{i=0}^{N-1}\int_{t_{i}}^{t_{i+1}}|Z_{t}-\widehat{\mathcal{Z}}_{i}(X_{t_{i}})|^2dt \right]+\mathbb{E}\left[\sum_{i=0}^{N-1}\int_{t_{i}}^{t_{i+1}}|\Gamma_{t}-\widehat{\mathcal{T}}_{i}(X_{t_{i}})|^2 \mathrm{d}t\right]\\
&\qquad \leq 2\mathbb{E}\left[\sum_{i=0}^{N-1}\int_{t_{i}}^{t_{i+1}}\left|Z_{t}-\overline{\widehat{Z}}_{t_i}\right|^2 dt\right]+2\Delta t_{i}\sum_{i=0}^{N-1}\left(\mathbb{E}\left[\left|\overline{\widehat{Z}}_{t_{i}}-\widehat{Z}_{i}(X_{t_{i}})\right|^2\right]\right)\\
&\qquad \quad+2\mathbb{E}\left[\sum_{i=0}^{N-1}\int_{t_{i}}^{t_{i+1}}\left|\Gamma_{t}-\overline{\widehat{\Gamma}}_{t_{i}}\right|^2\mathrm{d}t\right]+{2C\Delta t_{i}\sum_{i=0}^{N-1}\left(\mathbb{E}\left[\int_{E}\left(\overline{\widehat{U}}_{i}(X_{t_i},e)-\widehat{\mathcal{G}}_{i}(X_{t_{i}},e)\right)^2\lambda(\mathrm{d}e)\right]\right)}\\
&\qquad\leq C\left(\mathbb{E}|g(\mathcal{X}_{T})-g(X_{T})|^2+\frac{\gamma_{m}^{4}+\gamma_{m}^{6}+\gamma_{m}^{8}|\pi|^2}{N}+\varepsilon^{Z}(\pi)+\varepsilon^{\Gamma}(\pi)+{\sum_{i=0}^{N-1}\left(N\varepsilon^{\mathcal{N},v}_{i}+\varepsilon^{\mathcal{N},\Gamma}_{i}\right)}\right)\\
&\qquad \quad +C\left(\sum_{i=0}^{N-1}\varepsilon_{i}^{\mathcal{N},v}+|\pi|\sum_{i=0}^{N-1}\varepsilon_{i}^{\mathcal{N},\Gamma}+\left(\gamma_{m}^{4}+\gamma^{6}_{m}+|\pi|^2\gamma^8_{m}\right)|\pi|^2\right)\\
&\qquad \leq C\left(\mathbb{E}|g(\mathcal{X}_{T})-g(X_{T})|^2+\frac{\gamma_{m}^{4}+\gamma_{m}^{6}+\gamma_{m}^{8}|\pi|^{2}}{N}+\varepsilon^{Z}(\pi)+\varepsilon^{\Gamma}(\pi)+{\sum_{i=0}^{N-1}\left(N\varepsilon^{\mathcal{N},v}_{i}+\varepsilon^{\mathcal{N},\Gamma}_{i}\right)}\right).
\end{aligned}
\end{equation*}

{From (\ref{eq2.6}) we know that $\frac{\gamma_{m}^{4}}{N}=o\left(\frac{\gamma_{m}^{6}}{N}\right)$ and $\frac{\gamma_{m}^{8}|\pi|^{2}}{N}=o\left(\frac{\gamma_{m}^{6}}{N}\right)$ as $m,N\rightarrow \infty$, and thus $\frac{\gamma_{m}^{4}+\gamma_{m}^{6}+\gamma_{m}^{8}|\pi|^{2}}{N}=O\left(\frac{\gamma_{m}^{6}}{N}\right)$ as $m,N\rightarrow \infty$}. Recalling the convergence of $Y$ (see (\ref{eq3.24}) in \textbf{Sept 4}), we complete the proof of the theorem.
\end{proof}

{ \textbf{Remark 3.1}}  The error contributions for the DFBDP scheme in the r.h.s. of estimation (\ref{eq3.8}) consists of {five terms}. The first four terms correspond to the time discretization of BSDEJs, namely (i) the strong approximation of the terminal condition (depending on the forward scheme and the terminal condition $g(\cdot)$), and converging to zero as $|\pi|$ goes to zero when $g(\cdot)$ is Lipschitz; (ii) the strong approximation of the forward Euler scheme, and the $L^{2}-$ regularity of $Y$, which gives a convergence of order $O(|\pi|)$, equivalently $O(N^{-1})$; (iii) the $L^{2}-$ regularity of $Z$, which converges to zero as $|\pi|$ goes to zero when $g(\cdot)$ is Lipschitz; {(iv) the $L^{2}-$ regularity of $\Gamma$, which also converges to zero as $|\pi|$ goes to zero when $g(\cdot)$ is Lipschitz.} Finally, the better the neural networks are able to approximate the functions ${\widehat{v}}_{t_i}(\cdot)$ and $\overline{\widehat{\mathcal{U}}}_{t_i}(\cdot,e)$ at each time $i=0,\cdots,N-1$, the smaller is the last term in the error estimation. Moreover, the number of parameters of the employed deep neural networks grows at most polynomially in the PIDE dimension.


\section{Numerical examples}
In this section, we will explore two numerical examples to illustrate the effectiveness of the deep learning-based DFBDP algorithm.

\subsection{One-dimensional problem}
We first consider a one-dimensional problem. Let $
  g(T,x)=\sin(X_{T})$ and
  \begin{eqnarray*}
  b(t,X_{t})=0,~~ \sigma(t,X_{t})=1,~~ \beta(t,X_{t^{-}},e)=e,~~ d=1,
  \end{eqnarray*}
such that the corresponding PIDE is
\begin{equation}\label{4.1}
\begin{cases}
\begin{aligned}
\frac{\partial u}{\partial t} + \frac{1}{2}\frac{\partial^{2} u}{\partial x^{2}}
&+ \int_{\mathbb{E}}\left(u(t,x+e) - u(t,x) - e\frac{\partial u}{\partial x}(t,x)\right)\lambda(\mathrm{d}e) \\
&+ f\left(t,x,u,\sigma^\mathsf{T}\nabla_{x}u,B[u]\right) = 0,
\end{aligned} \\
u(T,x) = \sin(x),
\end{cases}
\end{equation}
and the exact solution of the above corresponding PIDE is $u(t,x)=e^{t-1} \sin(x)$. The compensated Poisson random measure:
  \begin{eqnarray*}
    \lambda(\mathrm{d}e) :=\lambda\rho(e)\mathrm{d}e,
  \end{eqnarray*}
where $\rho(e)$ is the density function of a distribution. Then the corresponding FBSDEJs of PIDE (\ref{4.1}) is
\begin{eqnarray}\label{4.2}
\left\{
\begin{aligned}
	\mathrm{d}X_{t}=& \mathrm{d}W_{t}+\int_{E}e\tilde{\mu}(\mathrm{d}e,\mathrm{d}t),\\
    -\mathrm{d}Y_{t}=& f\left(t,X_{t},Y_{t},Z_{t},\Gamma_{t}\right) \mathrm{d}t- Z_{t}^{T}\mathrm{d}W_{t}-\int_{E}U_{t}(e)\tilde{\mu}(\mathrm{d}e,\mathrm{d}t).
\end{aligned}
\right.
\end{eqnarray}

The form of the Lévy measure plays a decisive role in non-local terms. Here, we aim to evaluate the performance of our method under different form of Lévy measures $\rho(e)$. The following four different jump distributions $\rho(e)$ are utilized:
\begin{enumerate}
    \item Normal distribution with $\mu = 0.4$, $\sigma = 0.25$:
    \begin{align*}
\rho(e) & = \frac{1}{\sqrt{2\pi}\sigma} e^{-\frac{1}{2} \left(\frac{e-\mu}{\sigma}\right)^2}, e \in \mathbb{R};
\end{align*}
    \item Uniform distribution with $\delta = 0.7$:
    \begin{align*}
\rho(e) & =
\begin{cases}
\frac{1}{2\delta}, & -\delta \leq e \leq \delta \\
0, & \text{else}
\end{cases};
\end{align*}
    \item Exponential distribution with $\lambda_0 = 3$:
    \begin{align*}
\rho(e) & =
\begin{cases}
\lambda_0 e^{-\lambda_0 e}, & e \geq 0 \\
0, & e < 0
\end{cases};
\end{align*}
    \item Bernoulli distribution with $a_1 = -0.4$, $a_2 = 0.8$, $p = 0.5$:
    \begin{align*}
\rho(e) & =
\begin{cases}
p, & e = a_1 \\
1 - p, & e = a_2
\end{cases}.
\end{align*}
\end{enumerate}

Corresponding the above Lévy measures (1),(2),(3) and (4) , the term $f\left(t,x,u,\sigma^{T}(t,x)D_{x}u,B[u]\right)$ of the corresponding PIDEs are formulated as follows
\begin{align*}
f\left(t,x,u,\sigma^\mathsf{T}\nabla_{x}u,B[u]\right) & = -\frac{u \exp(\frac{\partial u}{\partial x})}{\exp(e^{t-1}\cos(x))}+0.5u-\lambda e^{t-1}\left(\frac{\sin(\delta)}{\delta}-1\right)\sin(x), \\
f\left(t,x,u,\sigma^\mathsf{T}\nabla_{x}u,B[u]\right) & = -\frac{u \exp(\frac{\partial u}{\partial x})}{\exp(e^{t-1}\cos(x))}+0.5u-\lambda e^{t-1}\left(e^{-0.5\sigma^{2}}-1\right)\sin(x)+\lambda \mu \frac{\partial u}{\partial x}, \\
f\left(t,x,u,\sigma^\mathsf{T}\nabla_{x}u,B[u]\right) & = -\frac{u \exp(\frac{\partial u}{\partial x})}{\exp(e^{t-1}\cos(x))}+0.5u+\frac{ \lambda}{\lambda_{0}} \frac{\partial u}{\partial x} \\
& \quad -\lambda e^{t-1}\left(\frac{\lambda_{0}}{\lambda_{0}^{2}+1}\cos(x)-\frac{1}{\lambda_{0}^{2}+1}\sin(x)\right), \\
f\left(t,x,u,\sigma^\mathsf{T}\nabla_{x}u,B[u]\right) & = -\frac{u \exp(\frac{\partial u}{\partial x})}{\exp(e^{t-1}\cos(x))}+0.5u+(p_1 a_1+p_2a_2) \lambda \frac{\partial u}{\partial x} \\
& \quad -\lambda e^{t-1}[p_{1}\sin(x+a_1)+p_{2}\sin(x+a_2)-\sin(x)],
\end{align*}
respectively.

Now, let us set $\lambda=1$, $N=30$ and set $1$ hidden layers with $1+20$ dimensional. The neural network for approximating the solution has a $1$-dimensional input and output, while the network for approximating the integral kernel of non-local terms has a $2$-dimensional input and a $1$-dimensional output.

To demonstrate the effectiveness of DFBDP algorithm for the FBSDEJs with different Lévy measures, Table 1 depicts average value of $u(0,X_{0})$ and standard deviation of $u(0,X_{0})$ based on $1000$ Monte Carlo samples and $10$ independent runs. It is observed from Table 1 that we can obtain a good approximation of $u(0,X_{0})$ by using the DFBDP algorithm after enough iterations of parameters updates.

To further demonstrate the neural network’s accuracy in approximating the solution function at different time instances with different Lévy measures, we compare the true process $Y_{t_i}=u(t_i,X_{t_i})$ with the corresponding neural network approximations $\hat{\mathcal{U}}_{i}(X_{t_i})$, and compare the exact $u(t,x)$ and $D_{x}u(t,x)$ with the corresponding neural network approximations at drfferent time instances.

 Depicted in the four subplots (A,B,C,D) of Figure 1 are the exact solutions and the approximate solutions of FBSDEJs (\ref{4.2}) under different Lévy measures (1)- (4), where the blue lines represent the exact $Y_t$ and the deep red dashed lines represent its estimates. { When simulating the sample paths of the exact and the predicted $Y_t$, jumps of the Poisson process within certain subintervals $[t_i,t_{i+1}]$ lead to the sample paths exhibit jumps within certain subintervals. Thus, the green bold lines represent the sample paths of exact $Y_{t}$ exhibit jumps within certain subintervals, and the yellow bold lines represent the sample paths of predicted $Y_{t}$ exhibit jumps within certain subintervals.} From Figure 1, we can observe that the DFBDP algorithm performs well in approximating the solutions to the FBSDEJs (\ref{4.2}) under different Lévy measures.

 {Moreover, depicted in Figures 2-5 are the estimates of the solution $u(t,x)$ of the PIDE (\ref{4.1}) at $t=0,~0.33,~0.66,~0.96$ under different Lévy measures (subplots A,B,C,D). In addition, we have plotted the graph of the gradient of the solution $u(t,x)$, denoted by $D_{x}u(t,x)$ (subplots E,F,G,H).} { From these figures, we can observe that} the DFBDP algorithm performs well in approximating the solutions and their gradients to the PIDE (\ref{4.1}) under different Lévy measures.

\begin{table}[h]
\centering
\caption{Estimate of \( u(0, x_0) \) where \( d = 1 \) and \( x_0 = \pi/2 \). Average and standard deviation observed over 10 independent runs are reported. The theoretical solution is 0.3679.}
\label{tab:4.1}
\begin{tabular}{lccc}
\toprule
 \textbf{Lévy measure} & \textbf{Averaged value} & \textbf{Standard deviation} & \textbf{Relative L1 Error} \\
\midrule
\textbf{Uniform} & 0.3599 & 0.00132 & 0.02174 \\
\textbf{Normal} & 0.3580 & 0.00101 & 0.02691 \\
\textbf{Exponential} & 0.3711 & 0.00098 & 0.00870 \\
\textbf{Bernoulli} & 0.3861 & 0.00147 & 0.04950 \\
\bottomrule
\end{tabular}
\end{table}

\begin{figure}[htb]
    \centering
    \begin{minipage}{0.47\linewidth}
        \centering
        \includegraphics[width=\linewidth]{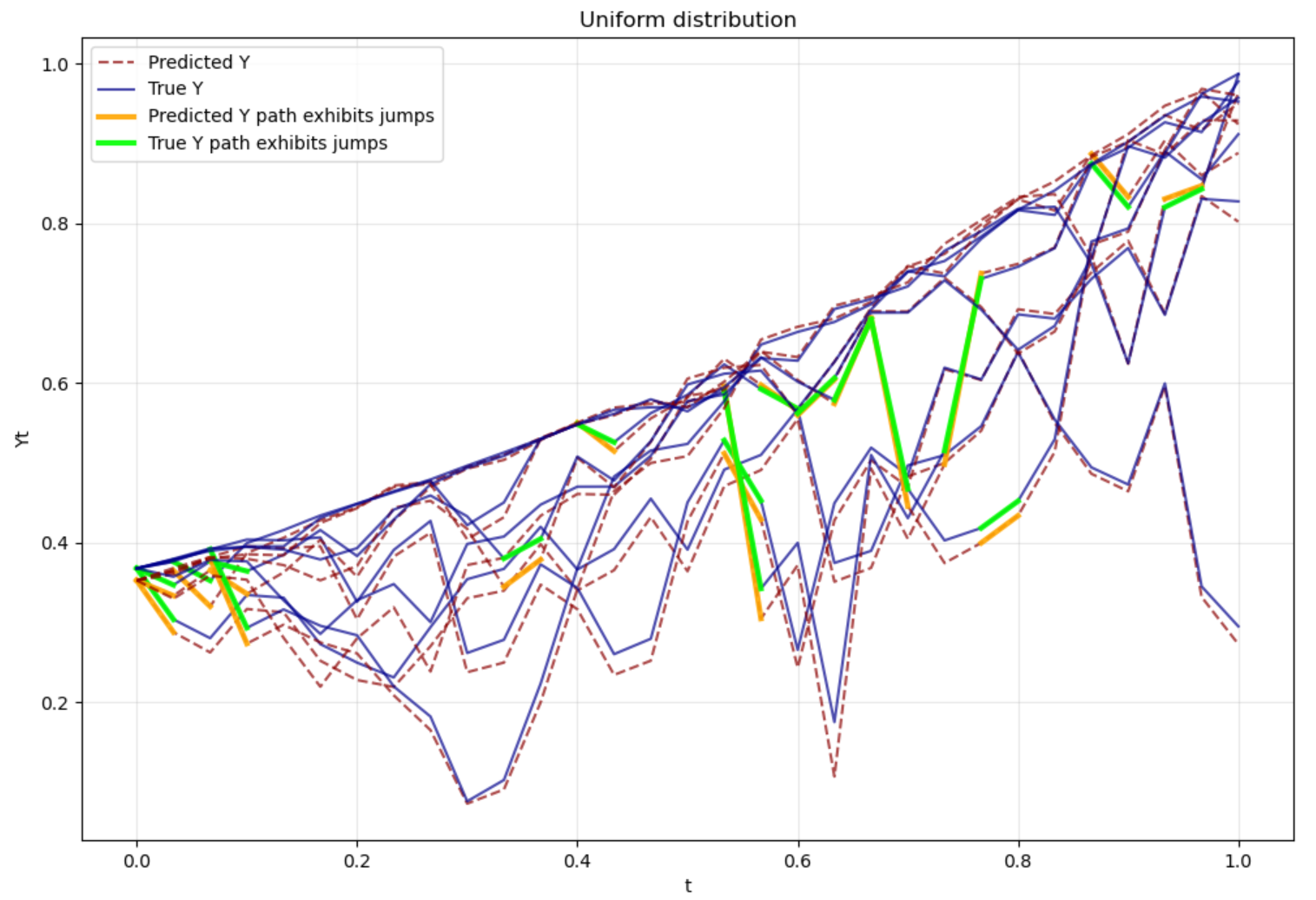}
        \subcaption{Uniform distribution}
    \end{minipage}
    \hfill
    \begin{minipage}{0.47\linewidth}
        \centering
        \includegraphics[width=\linewidth]{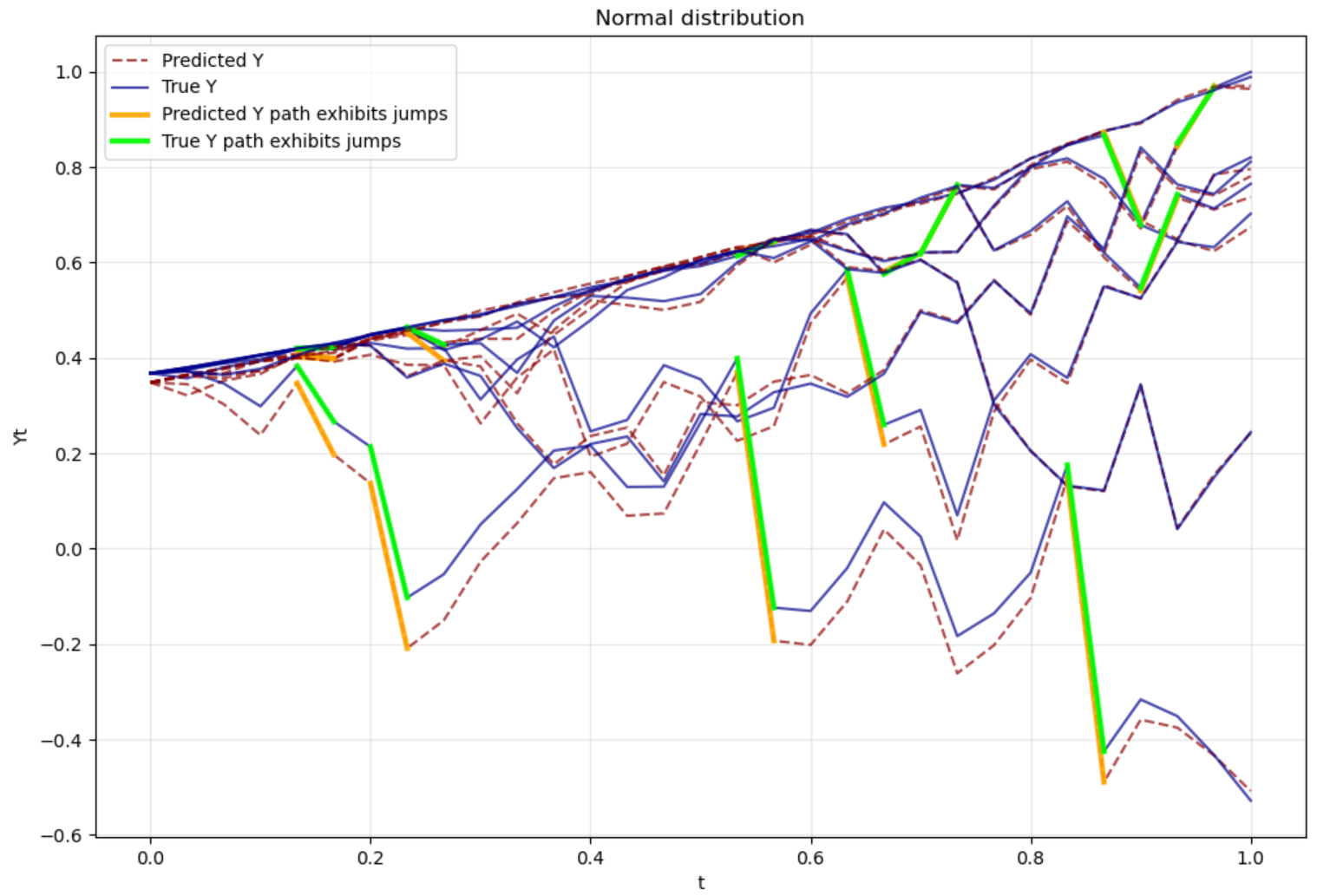}
        \subcaption{Normal distribution}
    \end{minipage}
    \begin{minipage}{0.47\linewidth}
        \centering
        \includegraphics[width=\linewidth]{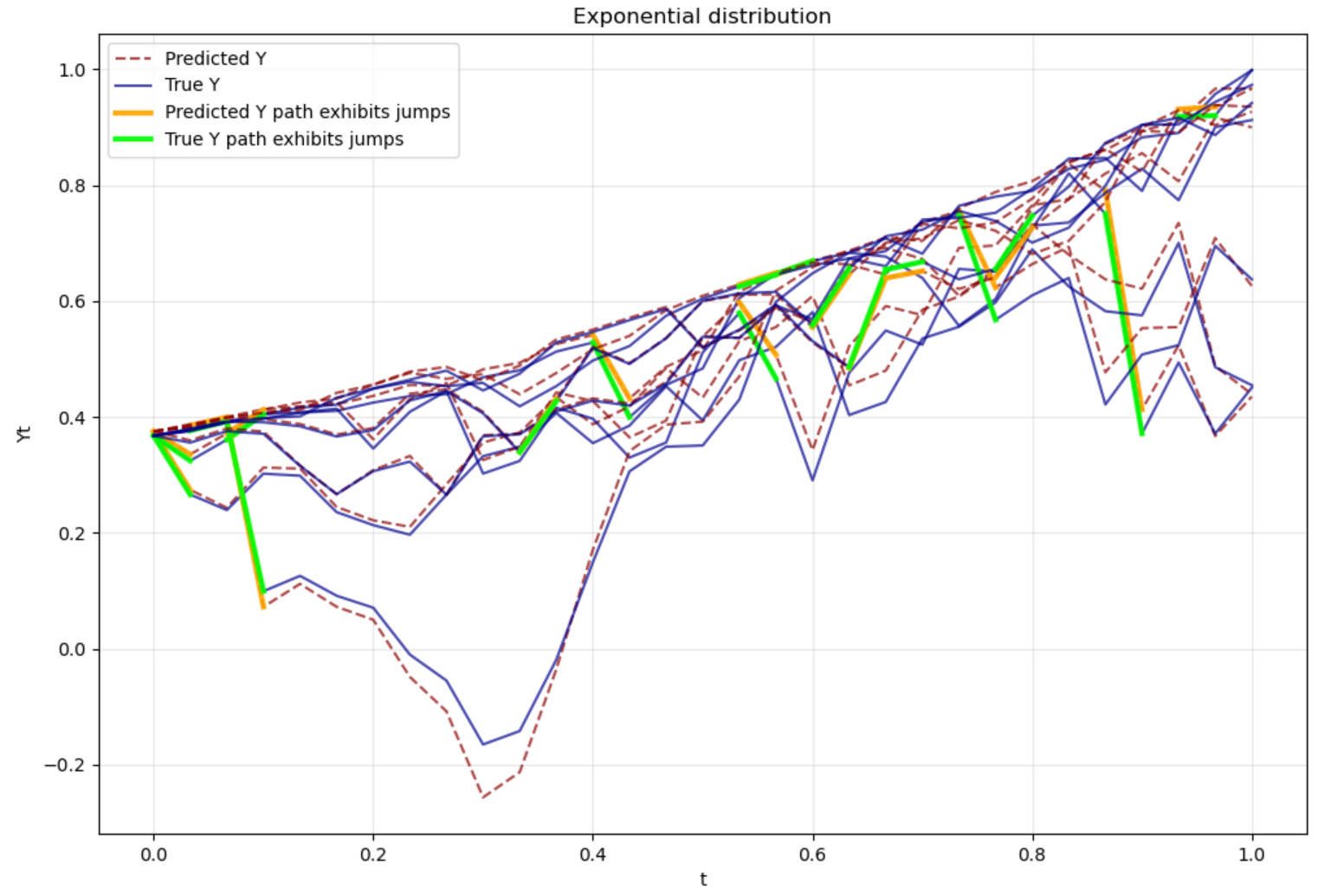}
        \subcaption{Exponential distribution}
    \end{minipage}
    \hfill
    \begin{minipage}{0.47\linewidth}
        \centering
        \includegraphics[width=\linewidth]{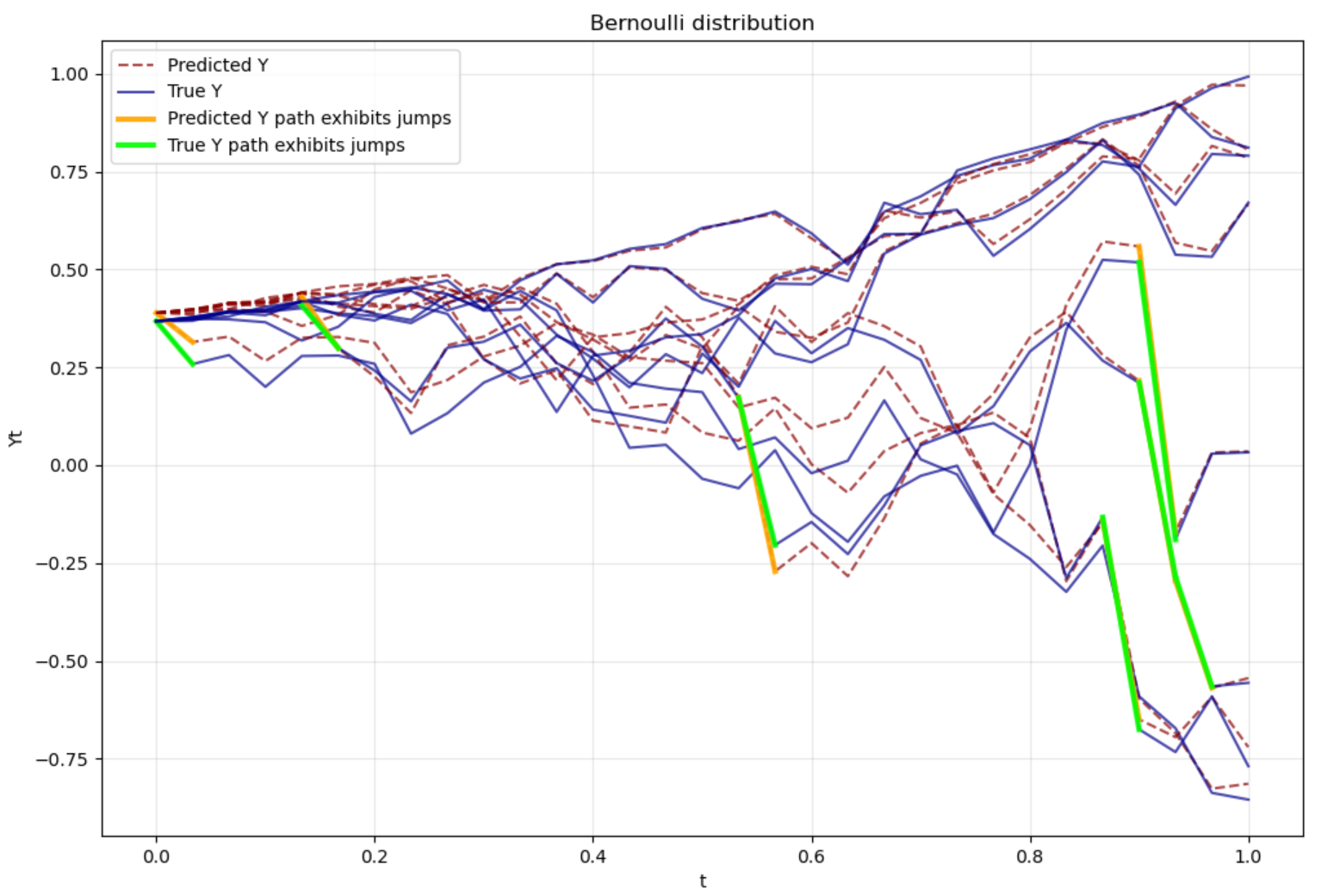}
        \subcaption{Bernoulli distribution}
    \end{minipage}
    \caption{Estimates of $Y_t$ using DFBDP algorithm under different Levy measures}
    \label{Figure4.1}
\end{figure}

\begin{figure}[htb]
    \centering
    \begin{minipage}{0.23\linewidth}
        \centering
        \includegraphics[width=\linewidth]{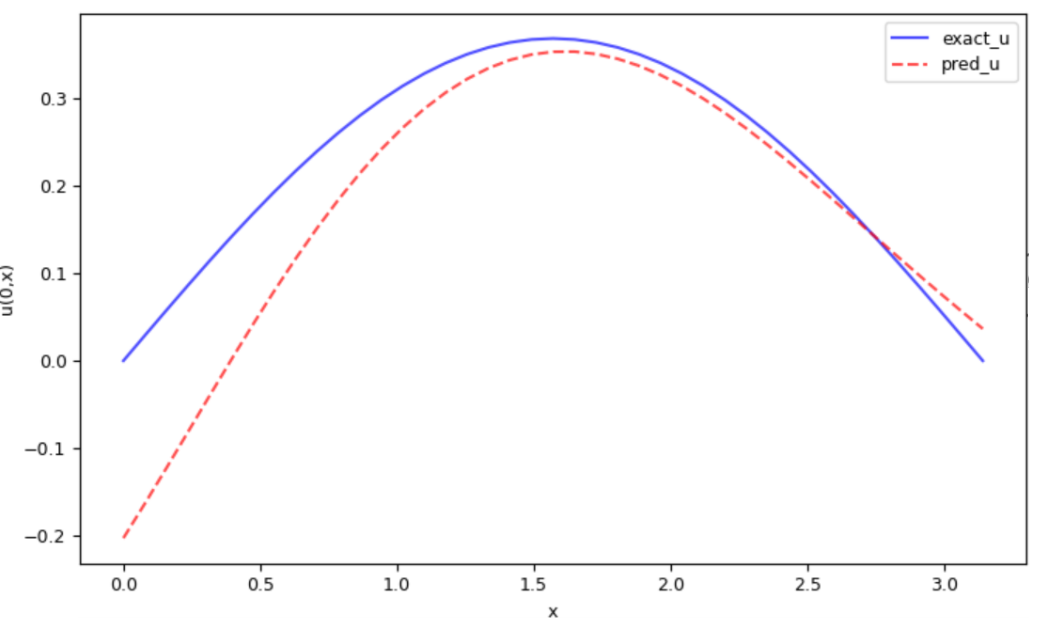}
        \subcaption{$u(0,x)$} 
    \end{minipage}
    \hfill
    \begin{minipage}{0.23\linewidth}
        \centering
        \includegraphics[width=\linewidth]{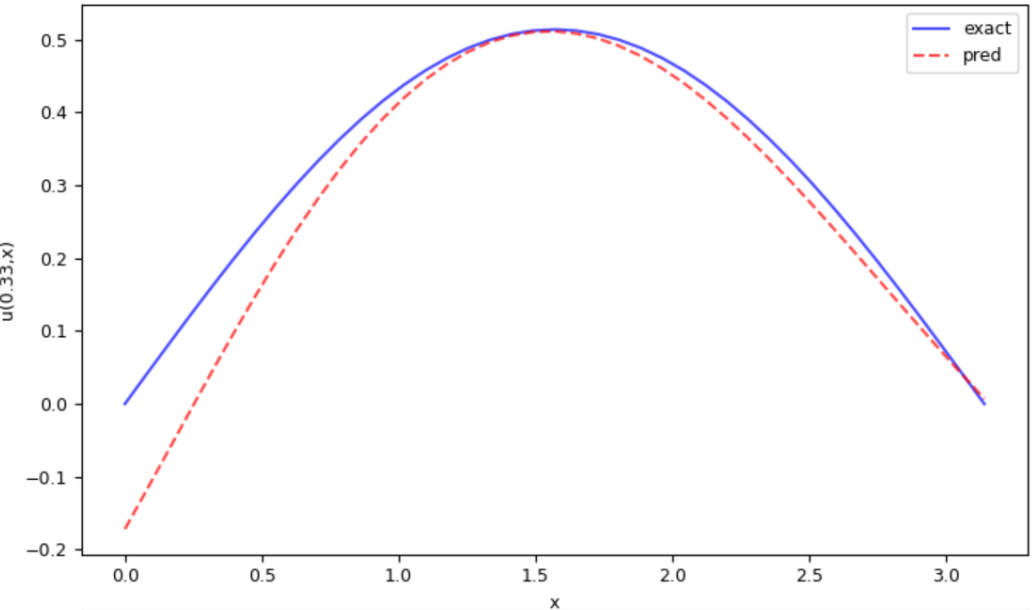}
        \subcaption{$u(0.33,x)$}
    \end{minipage}
    \hfill
    \begin{minipage}{0.23\linewidth}
        \centering
        \includegraphics[width=\linewidth]{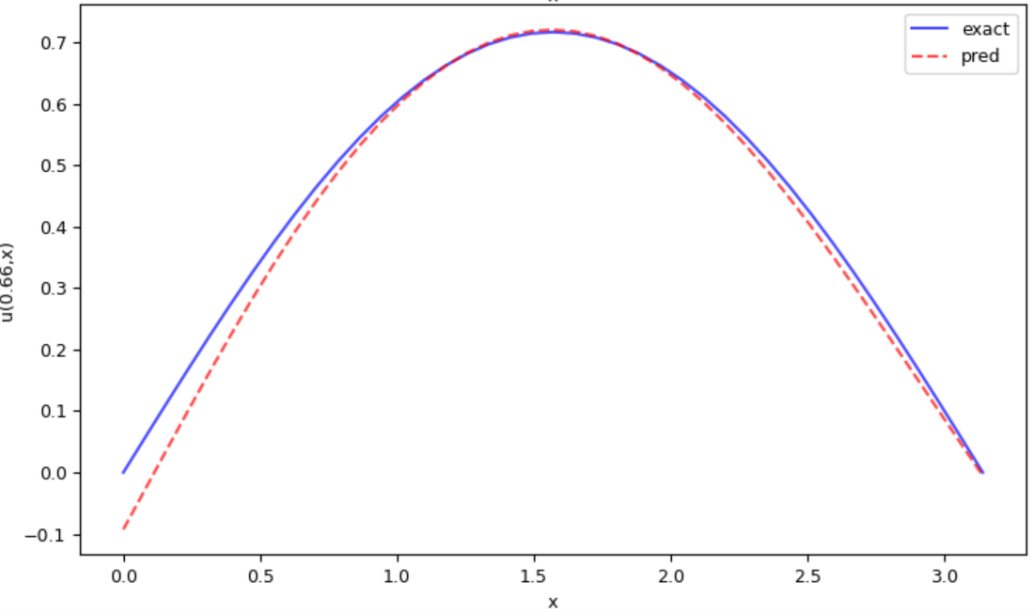}
        \subcaption{$u(0.66,x)$}
    \end{minipage}
    \hfill
    \begin{minipage}{0.23\linewidth}
        \centering
        \includegraphics[width=\linewidth]{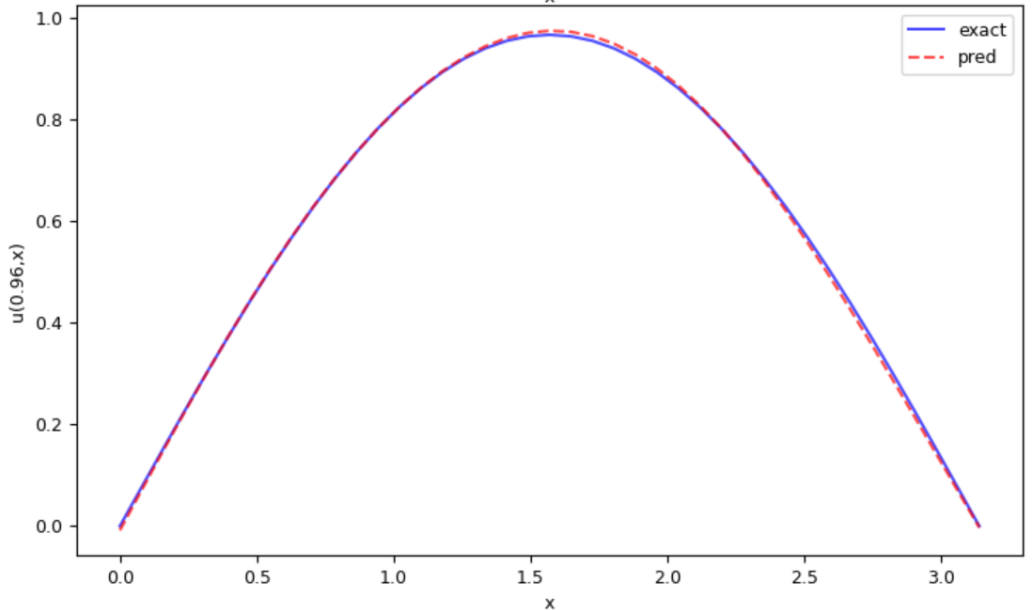}
        \subcaption{$u(0.96,x)$}
    \end{minipage}

    \vspace{0.8cm}

    \begin{minipage}{0.23\linewidth}
        \centering
        \includegraphics[width=\linewidth]{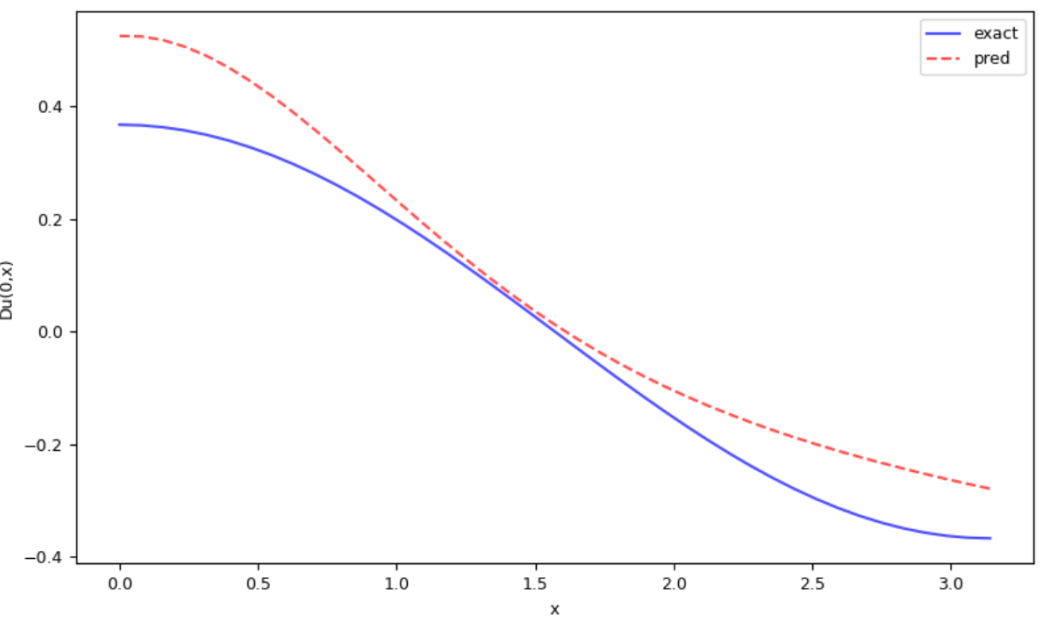}
        \subcaption{$D_{x}u(0,x)$}
    \end{minipage}
    \hfill
    \begin{minipage}{0.23\linewidth}
        \centering
        \includegraphics[width=\linewidth]{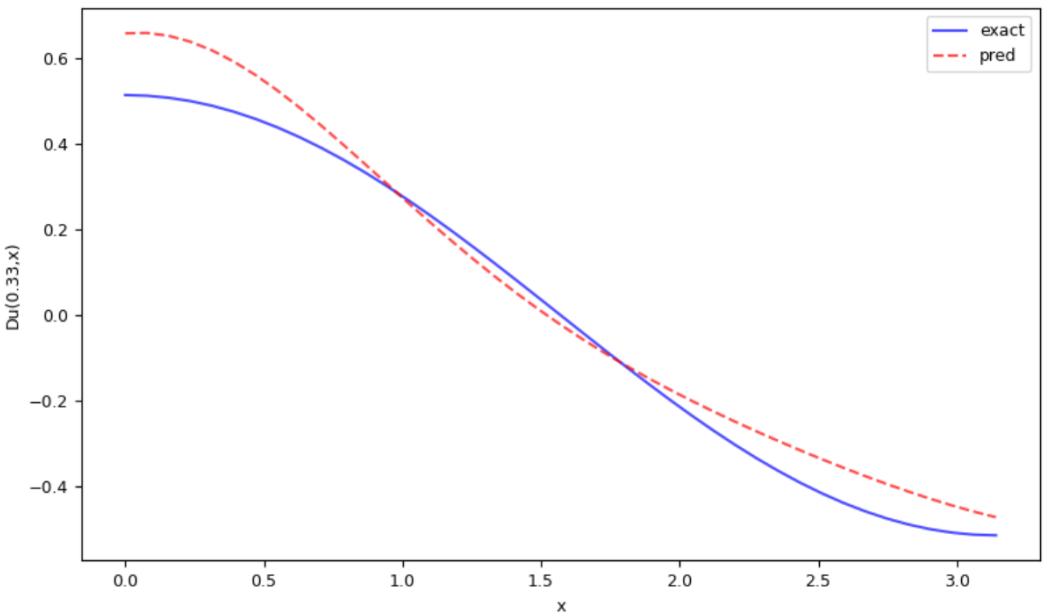}
        \subcaption{$D_{x}u(0.33,x)$}
    \end{minipage}
    \hfill
    \begin{minipage}{0.23\linewidth}
        \centering
        \includegraphics[width=\linewidth]{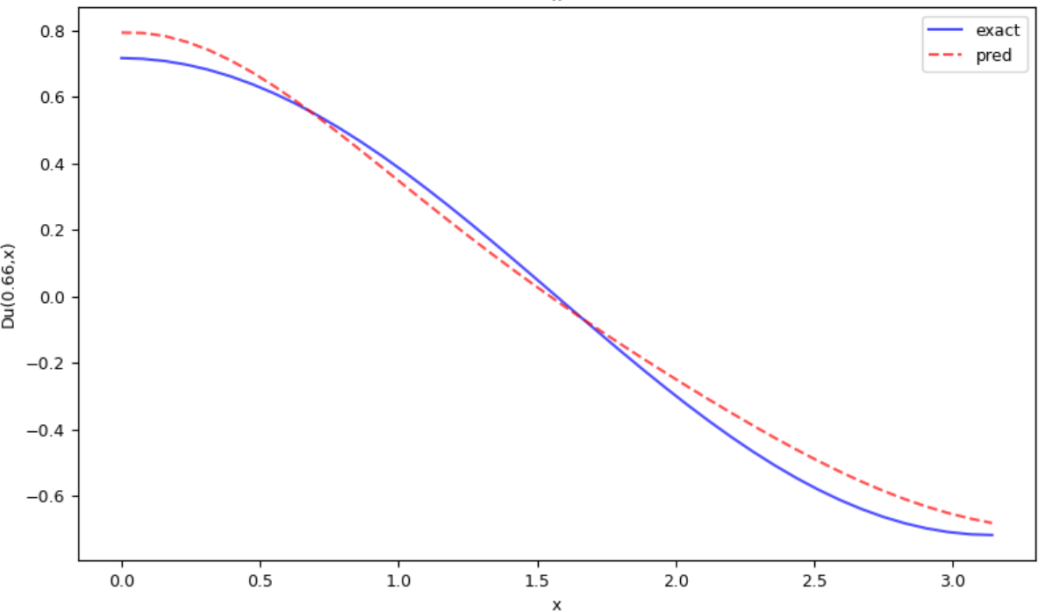}
        \subcaption{$D_{x}u(0.66,x)$}
    \end{minipage}
    \hfill
    \begin{minipage}{0.23\linewidth}
        \centering
        \includegraphics[width=\linewidth]{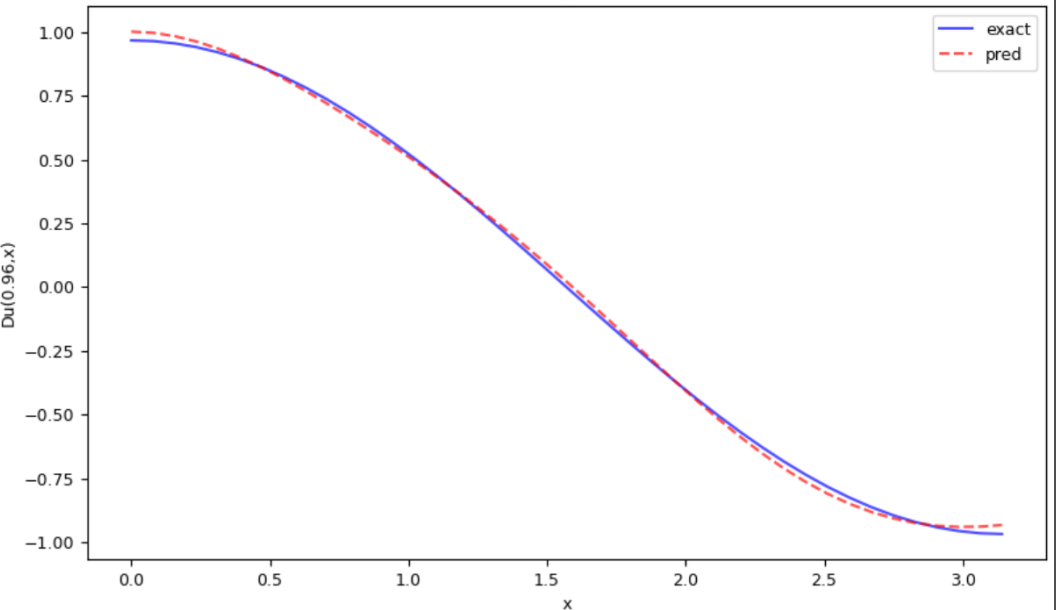}
        \subcaption{$D_{x}u(0.96,x)$}
    \end{minipage}

    \caption{Estimates of $u(t,x)$ at $t=0,0.33,0.66,0.96$ (A,B,C,D) and estimates of $D_{x}u(t,x)$ at $t=0,0.33,0.66,0.96$ (E,F,G,H) under Uniform distribution Lévy measure}
    \label{Figure4.3}
\end{figure}


\begin{figure}[htb]
    \centering
    \begin{minipage}{0.23\linewidth}
        \centering
        \includegraphics[width=\linewidth]{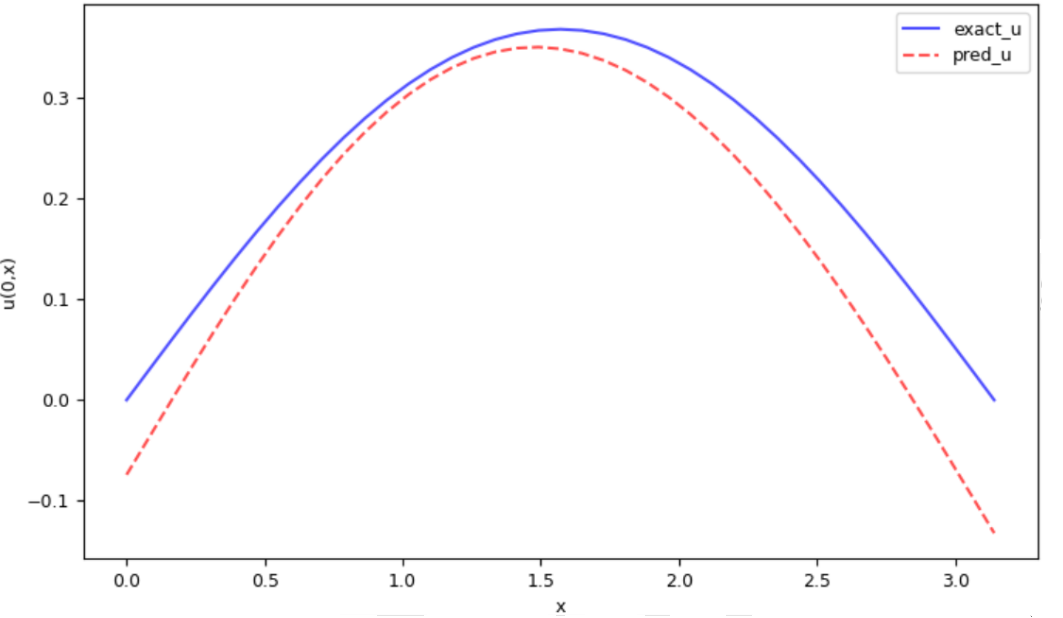}
        \subcaption{$u(0,x)$} 
    \end{minipage}
    \hfill
    \begin{minipage}{0.23\linewidth}
        \centering
        \includegraphics[width=\linewidth]{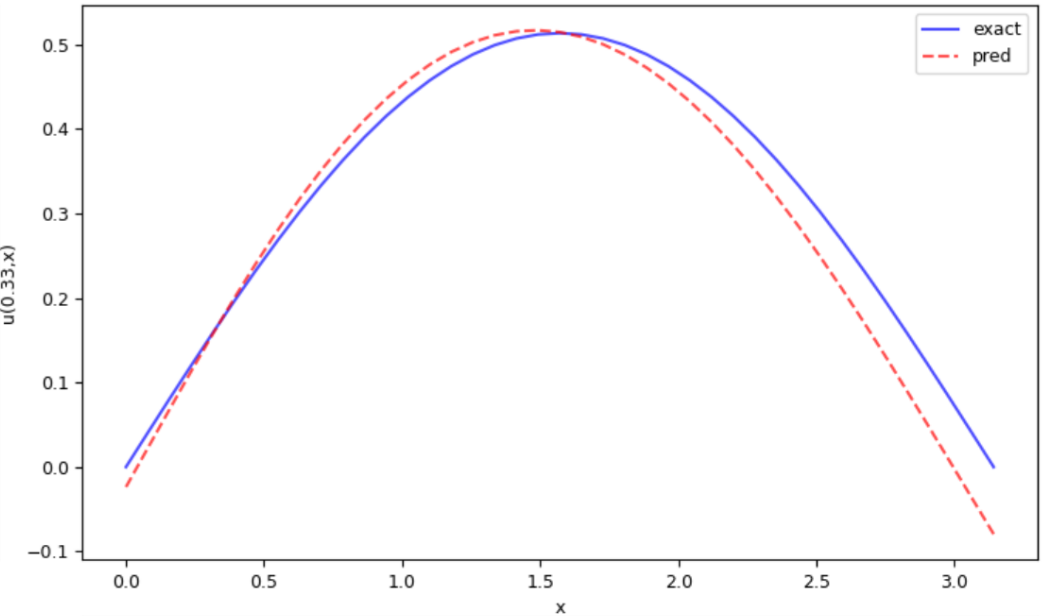}
        \subcaption{$u(0.33,x)$} 
    \end{minipage}
    \begin{minipage}{0.23\linewidth}
        \centering
        \includegraphics[width=\linewidth]{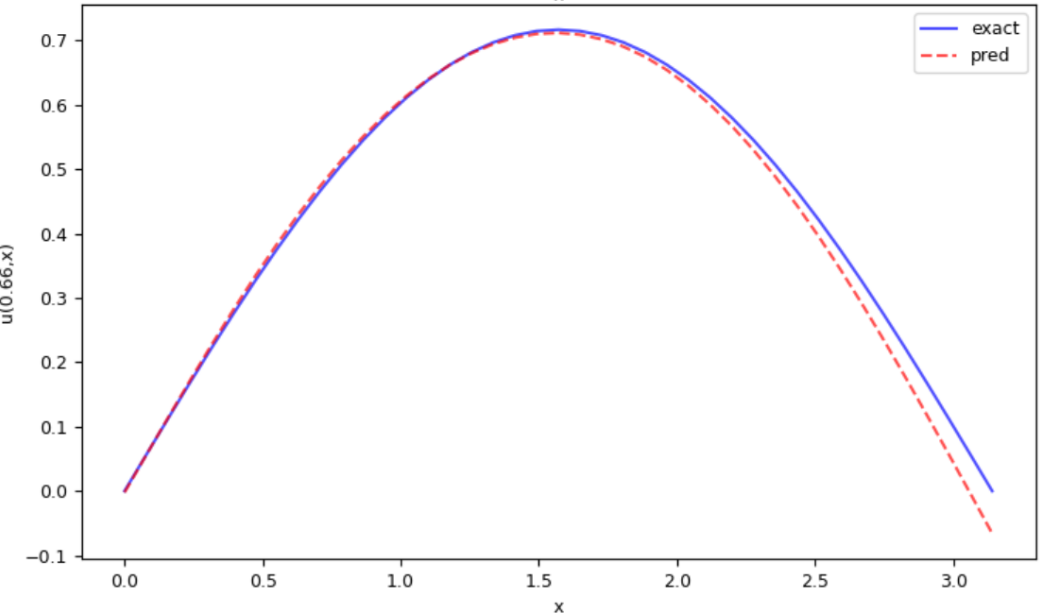}
        \subcaption{$u(0.66,x)$} 
    \end{minipage}
    \hfill
    \begin{minipage}{0.23\linewidth}
        \centering
        \includegraphics[width=\linewidth]{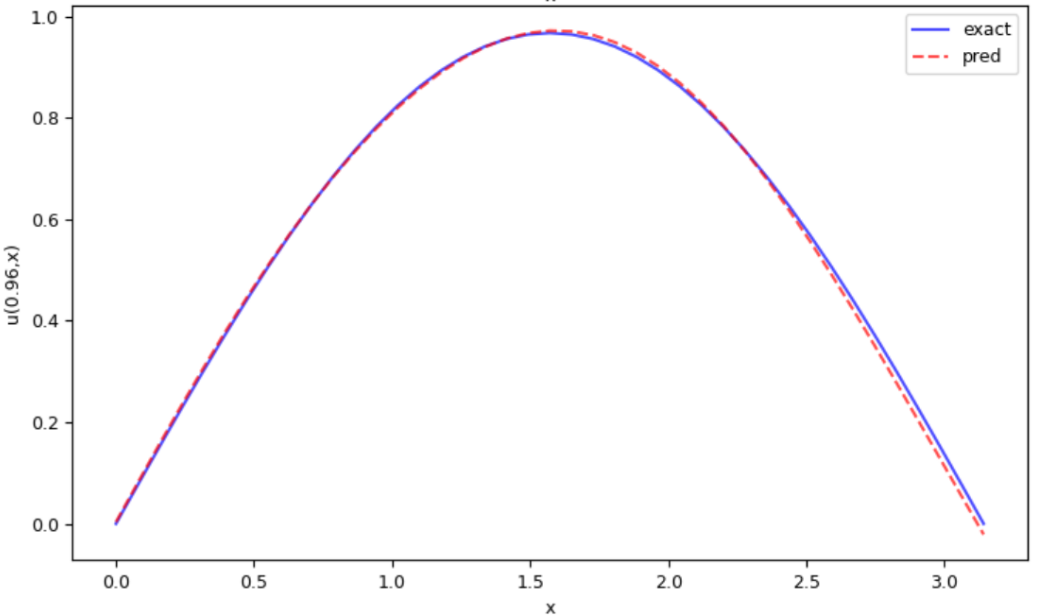}
        \subcaption{$u(0.96,x)$} 
    \end{minipage}

    \vspace{0.8cm}

    \begin{minipage}{0.23\linewidth}
        \centering
        \includegraphics[width=\linewidth]{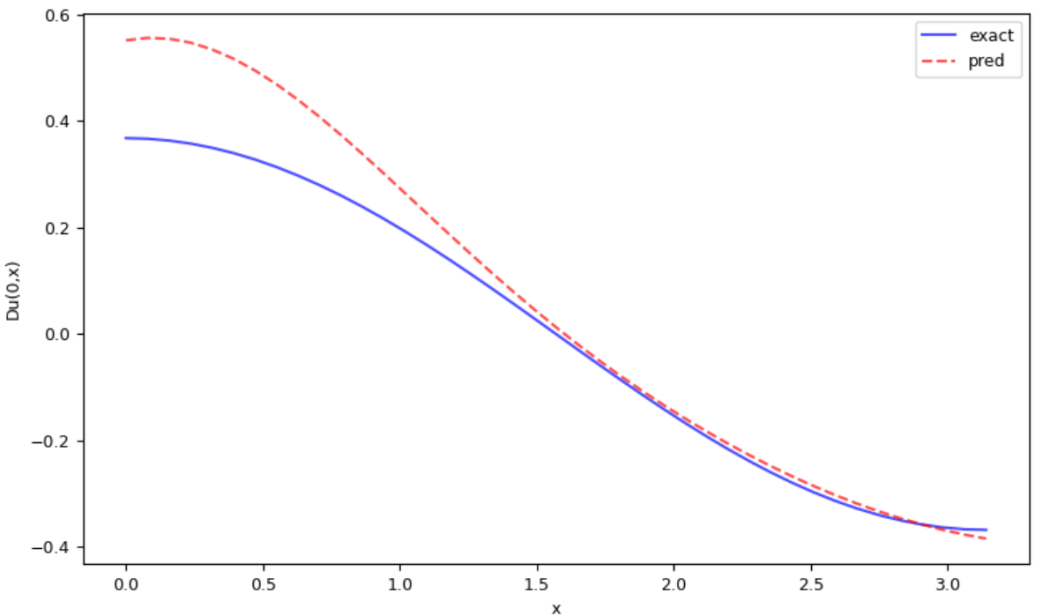}
        \subcaption{$D_{x}u(0,x)$} 
    \end{minipage}
    \hfill
    \begin{minipage}{0.23\linewidth}
        \centering
        \includegraphics[width=\linewidth]{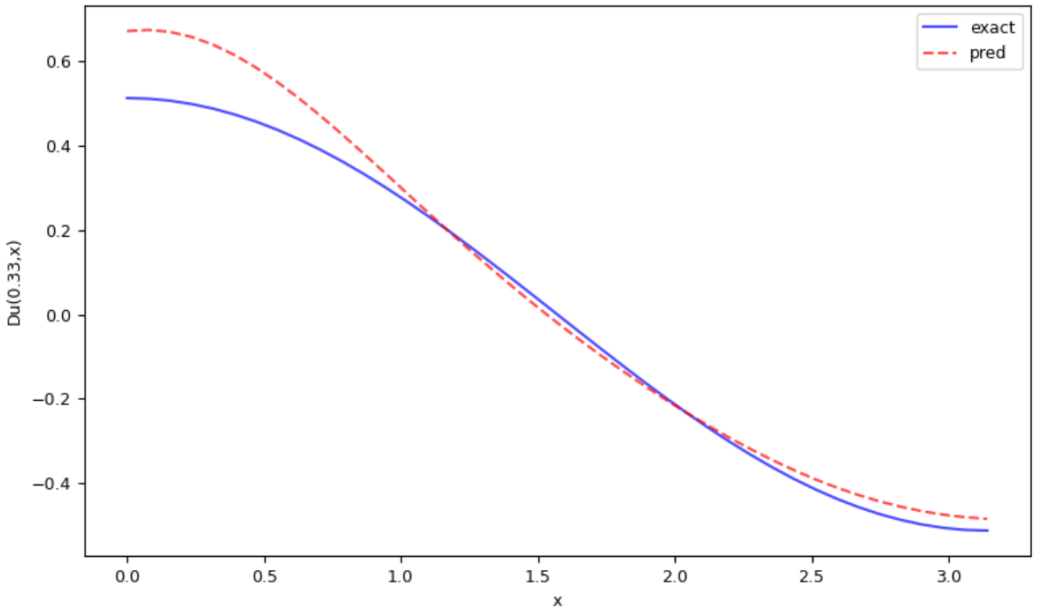}
        \subcaption{$D_{x}u(0.33,x)$} 
    \end{minipage}
    \begin{minipage}{0.23\linewidth}
        \centering
        \includegraphics[width=\linewidth]{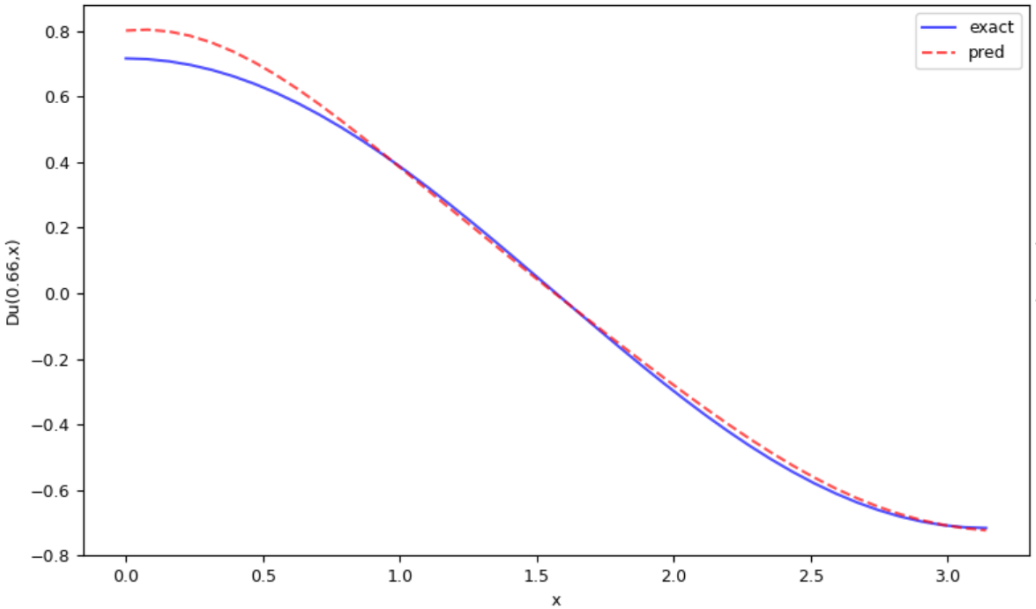}
        \subcaption{$D_{x}u(0.66,x)$} 
    \end{minipage}
    \hfill
    \begin{minipage}{0.23\linewidth}
        \centering
        \includegraphics[width=\linewidth]{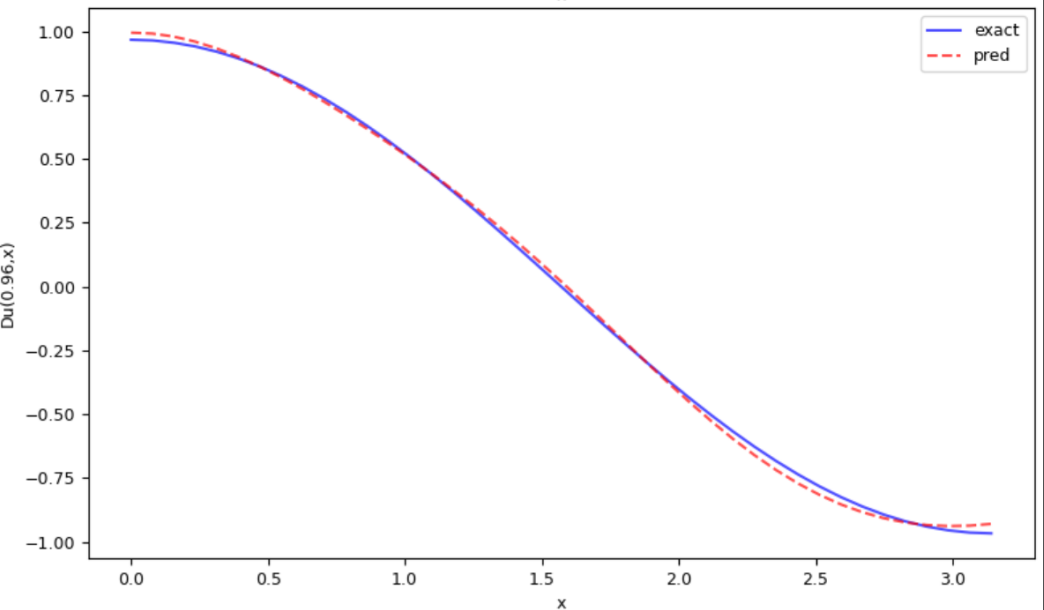}
        \subcaption{$D_{x}u(0.96,x)$} 
    \end{minipage}
    \caption{Estimates of $u(t,x)$ at $t=0,0.33,0.66,0.96$ (A,B,C,D) and estimates of $D_{x}u(t,x)$ at $t=0,0.33,0.66,0.96$ (E,F,G,H) under Normal distribution Lévy measure}
    \label{Figure4.5}
\end{figure}


\begin{figure}[htb]
    \centering
    \begin{minipage}{0.23\linewidth}
        \centering
        \includegraphics[width=\linewidth]{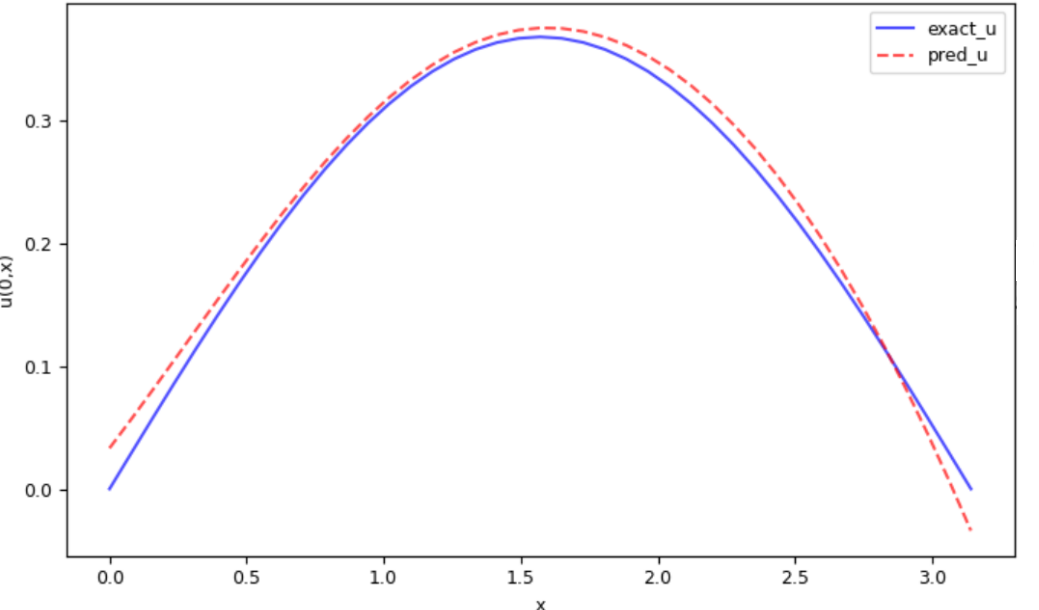}
        \subcaption{$u(0,x)$} 
    \end{minipage}
    \hfill
    \begin{minipage}{0.23\linewidth}
        \centering
        \includegraphics[width=\linewidth]{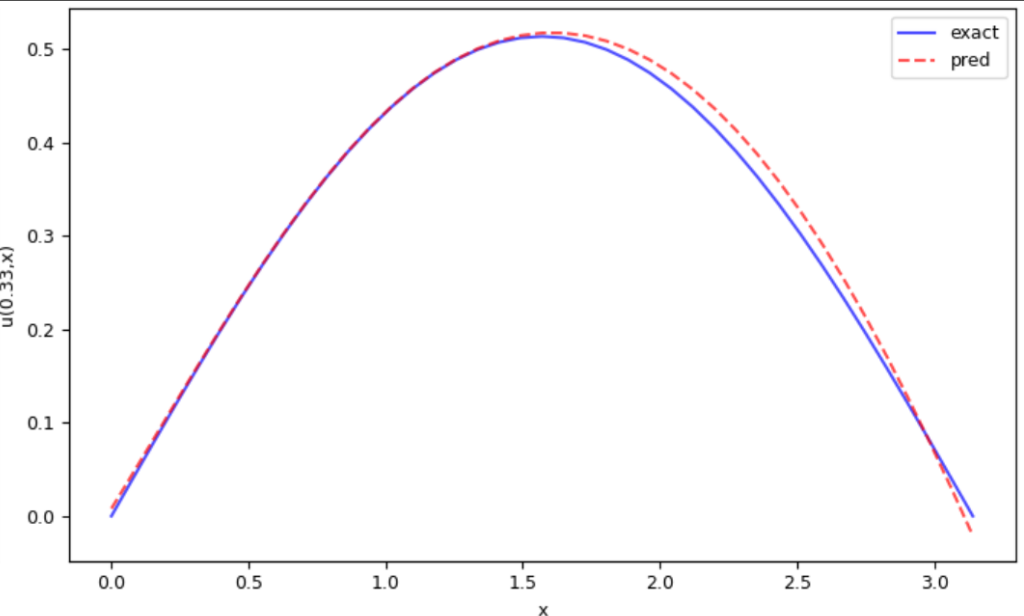}
        \subcaption{$u(0.33,x)$} 
    \end{minipage}
    \begin{minipage}{0.23\linewidth}
        \centering
        \includegraphics[width=\linewidth]{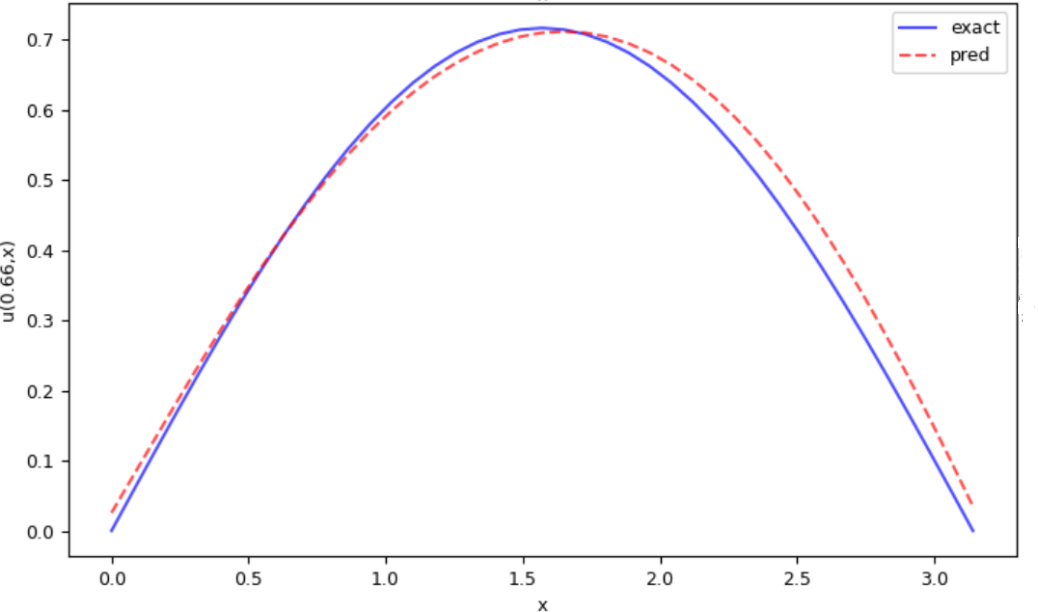}
        \subcaption{$u(0.66,x)$} 
    \end{minipage}
    \hfill
    \begin{minipage}{0.23\linewidth}
        \centering
        \includegraphics[width=\linewidth]{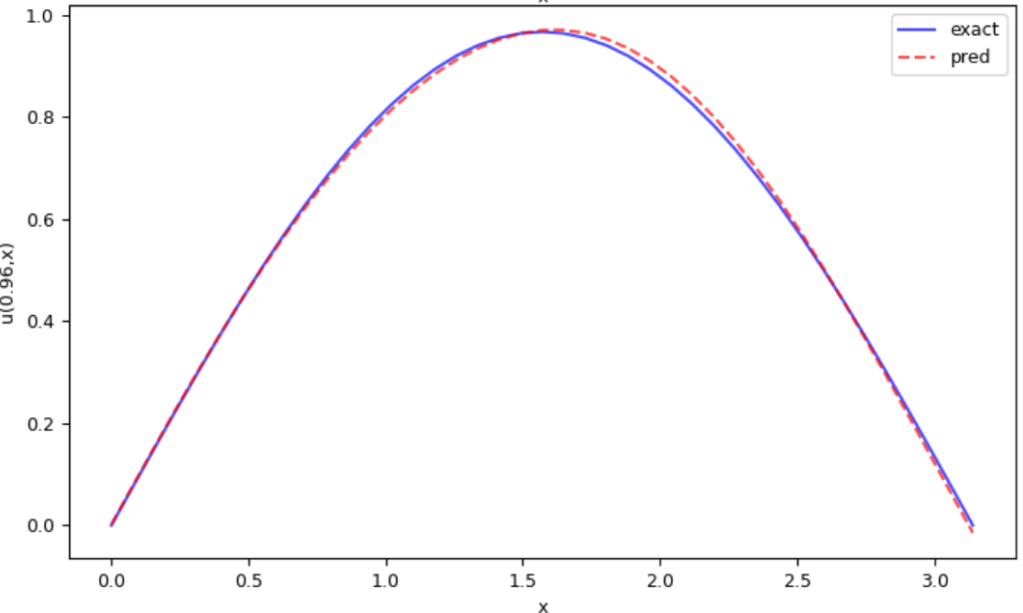}
        \subcaption{$u(0.96,x)$} 
    \end{minipage}

    \vspace{0.8cm}

    \begin{minipage}{0.23\linewidth}
        \centering
        \includegraphics[width=\linewidth]{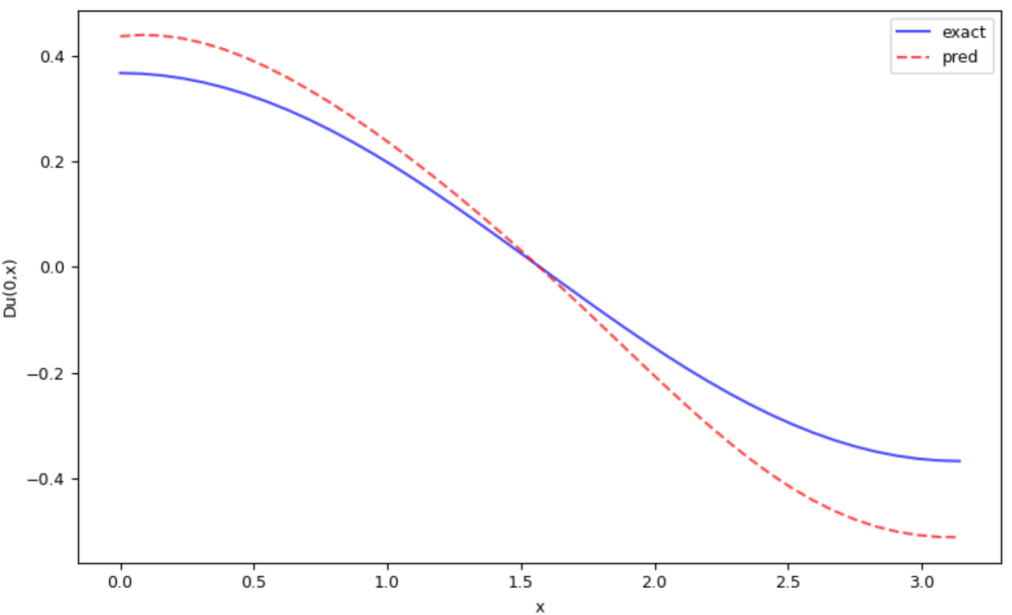}
        \subcaption{$D_{x}u(0,x)$} 
    \end{minipage}
    \hfill
    \begin{minipage}{0.23\linewidth}
        \centering
        \includegraphics[width=\linewidth]{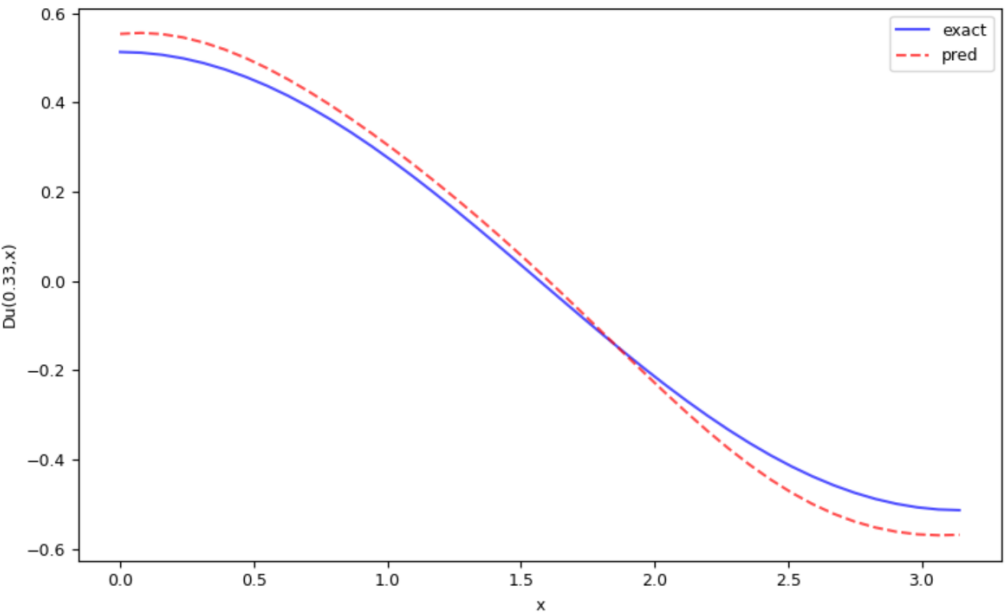}
        \subcaption{$D_{x}u(0.33,x)$} 
    \end{minipage}
    \begin{minipage}{0.23\linewidth}
        \centering
        \includegraphics[width=\linewidth]{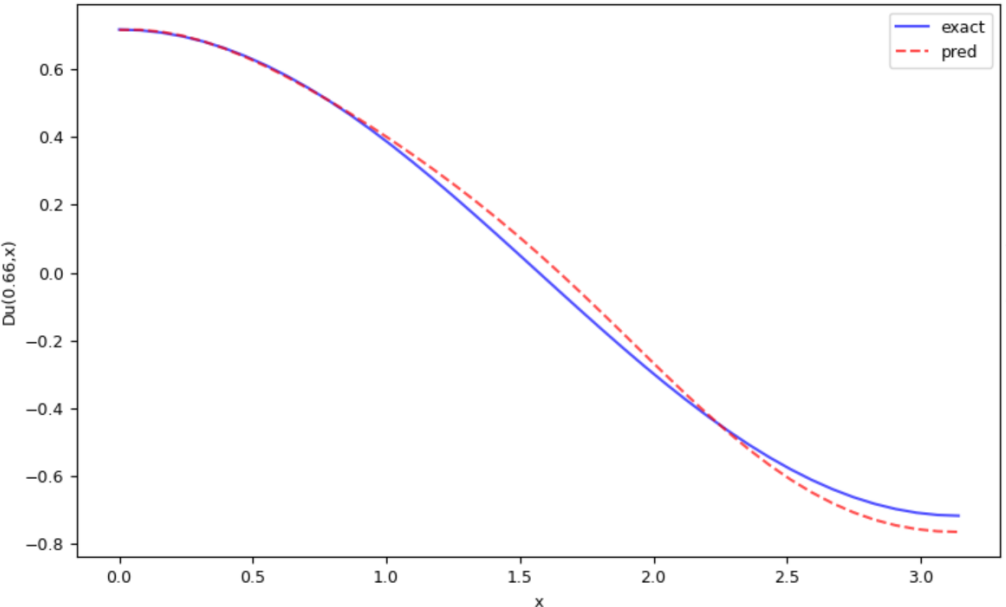}
        \subcaption{$D_{x}u(0.66,x)$} 
    \end{minipage}
    \hfill
    \begin{minipage}{0.23\linewidth}
        \centering
        \includegraphics[width=\linewidth]{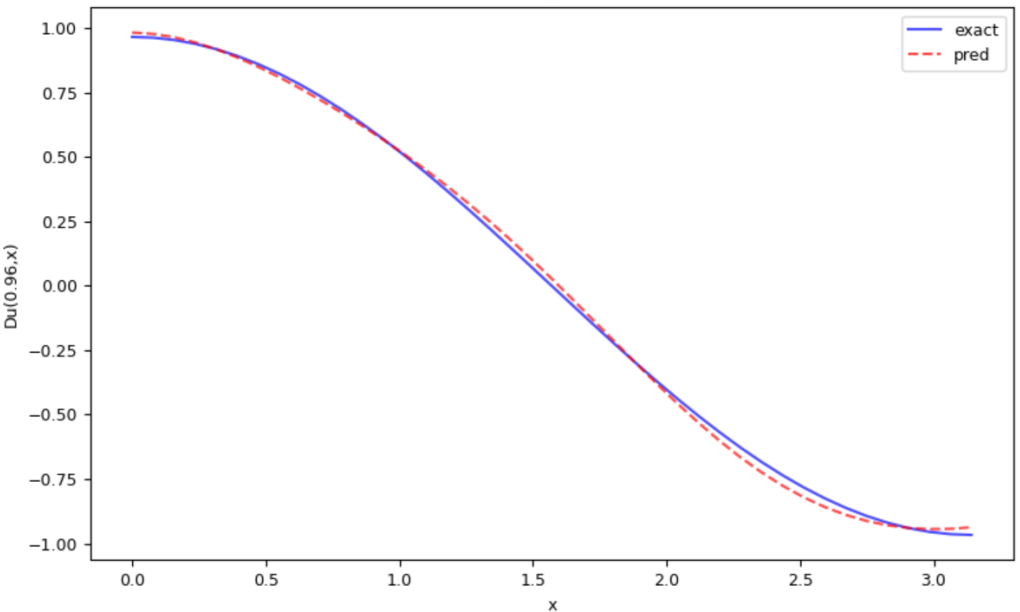}
        \subcaption{$D_{x}u(0.96,x)$} 
    \end{minipage}
    \caption{Estimates of $u(t,x)$ at $t=0,0.33,0.66,0.96$ (A,B,C,D) and estimates of $D_{x}u(t,x)$ at $t=0,0.33,0.66,0.96$ (E,F,G,H) under Exponential distribution Lévy measure}
    \label{Figure4.7}
\end{figure}


\begin{figure}[htb]
    \centering
    \begin{minipage}{0.23\linewidth}
        \centering
        \includegraphics[width=\linewidth]{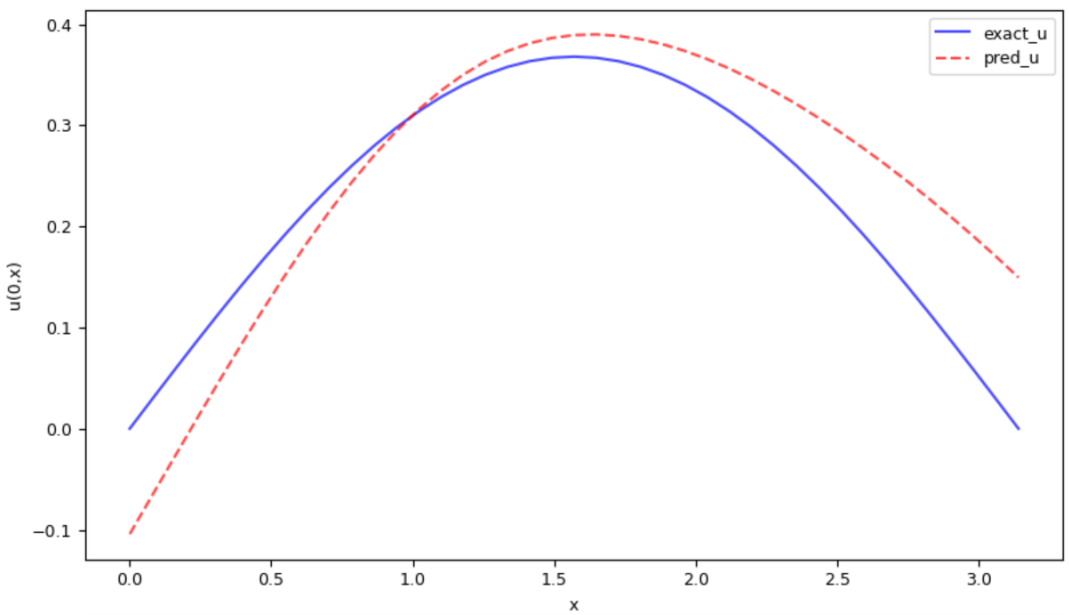}
        \subcaption{$u(0,x)$} 
    \end{minipage}
    \hfill
    \begin{minipage}{0.23\linewidth}
        \centering
        \includegraphics[width=\linewidth]{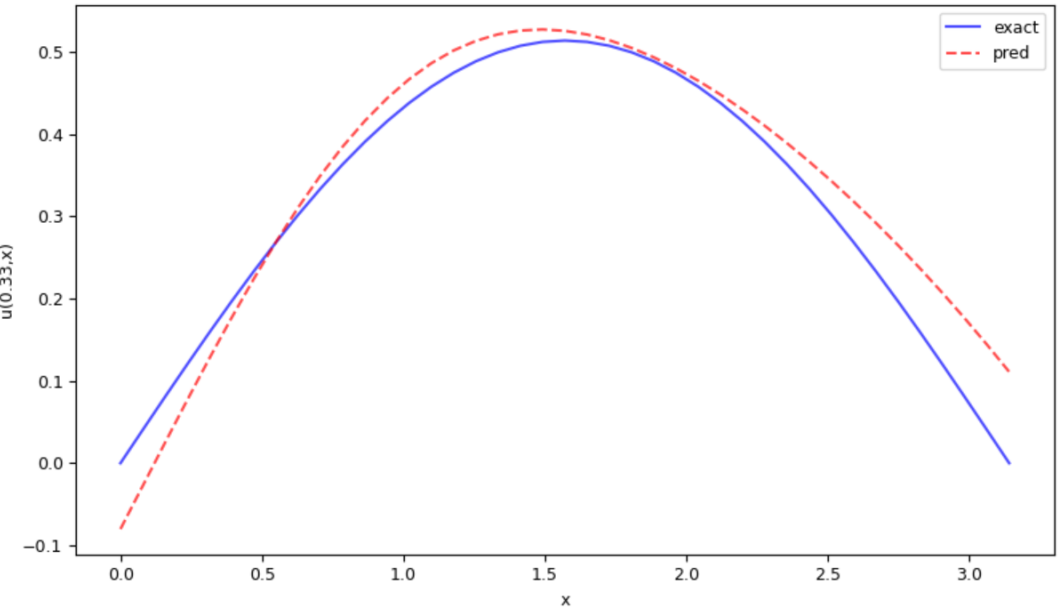}
        \subcaption{$u(0.33,x)$} 
    \end{minipage}
    \begin{minipage}{0.23\linewidth}
        \centering
        \includegraphics[width=\linewidth]{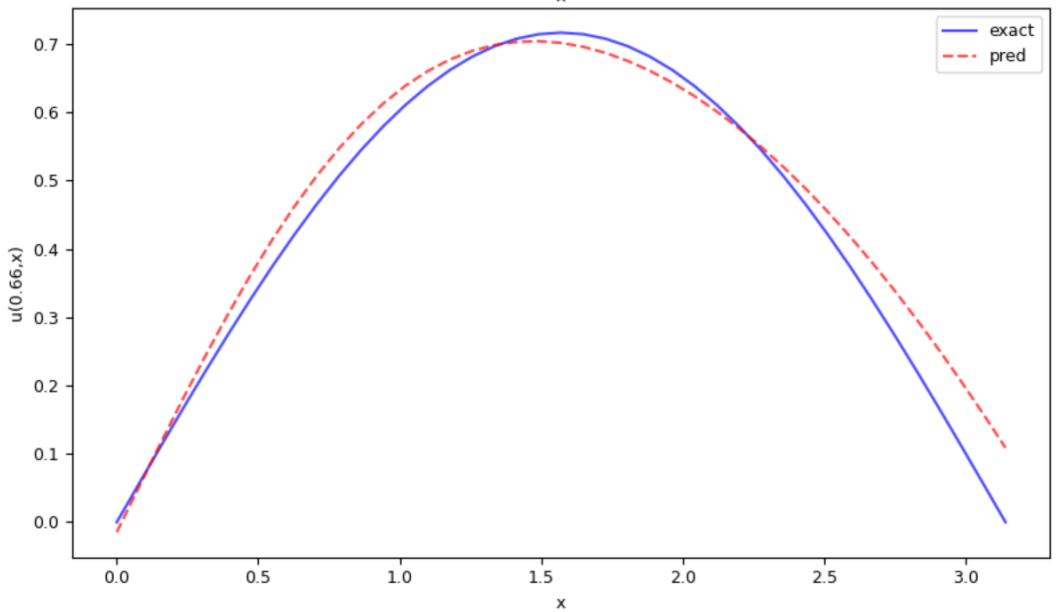}
        \subcaption{$u(0.66,x)$} 
    \end{minipage}
    \hfill
    \begin{minipage}{0.23\linewidth}
        \centering
        \includegraphics[width=\linewidth]{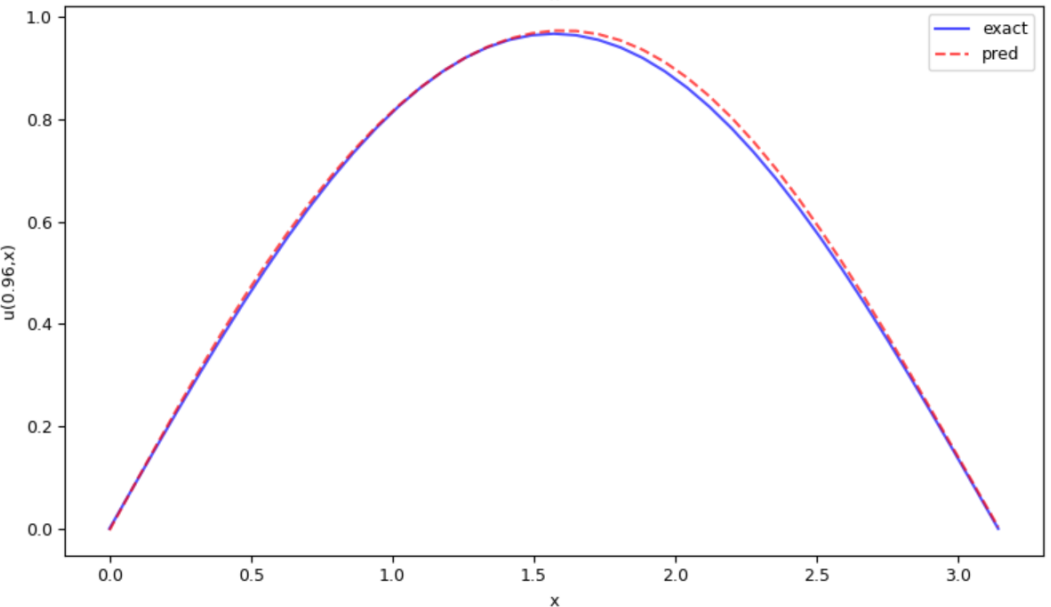}
        \subcaption{$u(0.96,x)$} 
    \end{minipage}

    \vspace{0.8cm}

    \begin{minipage}{0.23\linewidth}
        \centering
        \includegraphics[width=\linewidth]{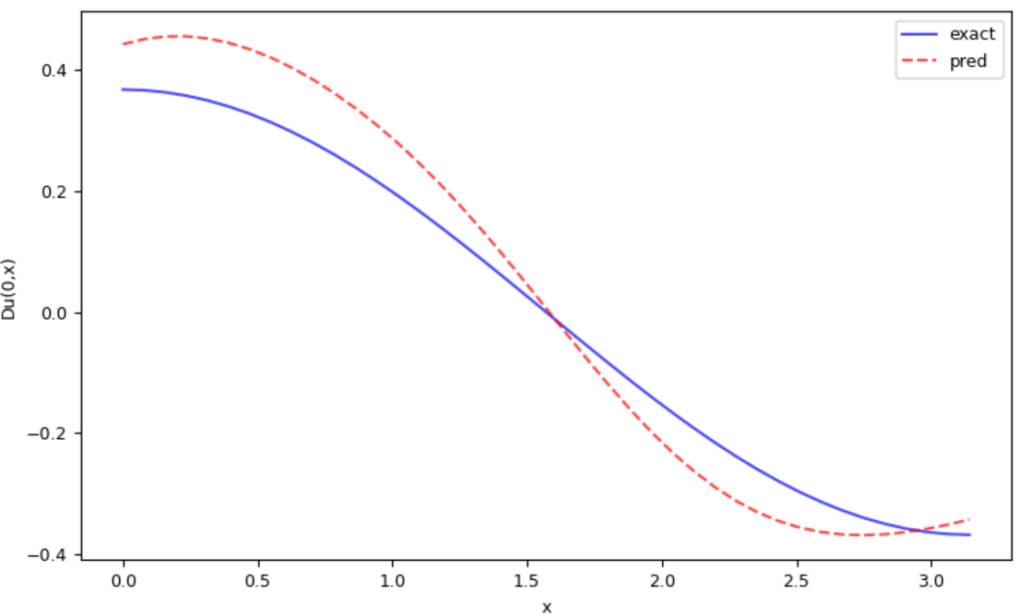}
        \subcaption{$D_{x}u(0,x)$} 
    \end{minipage}
    \hfill
    \begin{minipage}{0.23\linewidth}
        \centering
        \includegraphics[width=\linewidth]{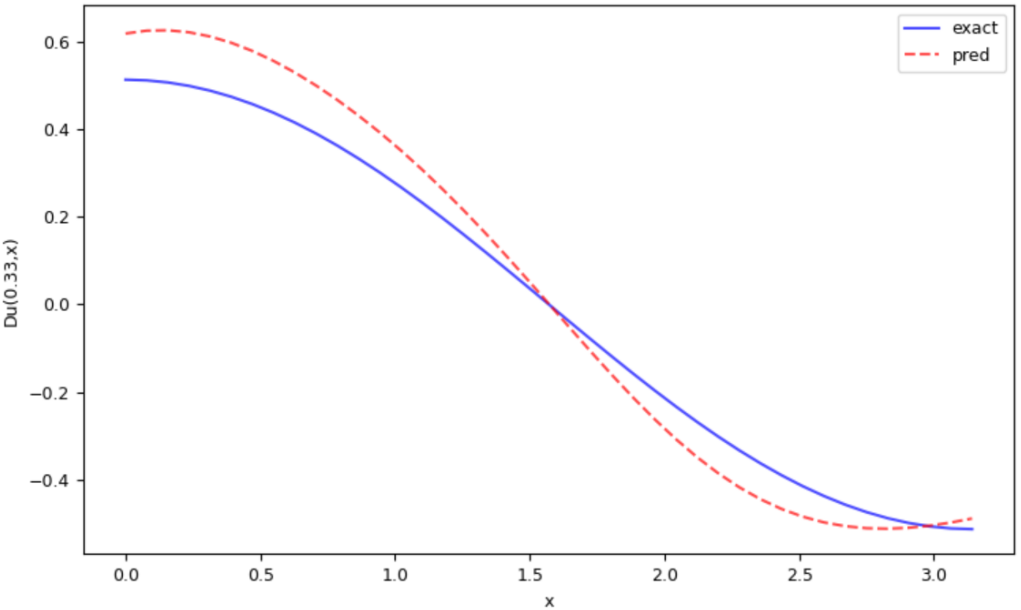}
        \subcaption{$D_{x}u(0.33,x)$} 
    \end{minipage}
    \begin{minipage}{0.23\linewidth}
        \centering
        \includegraphics[width=\linewidth]{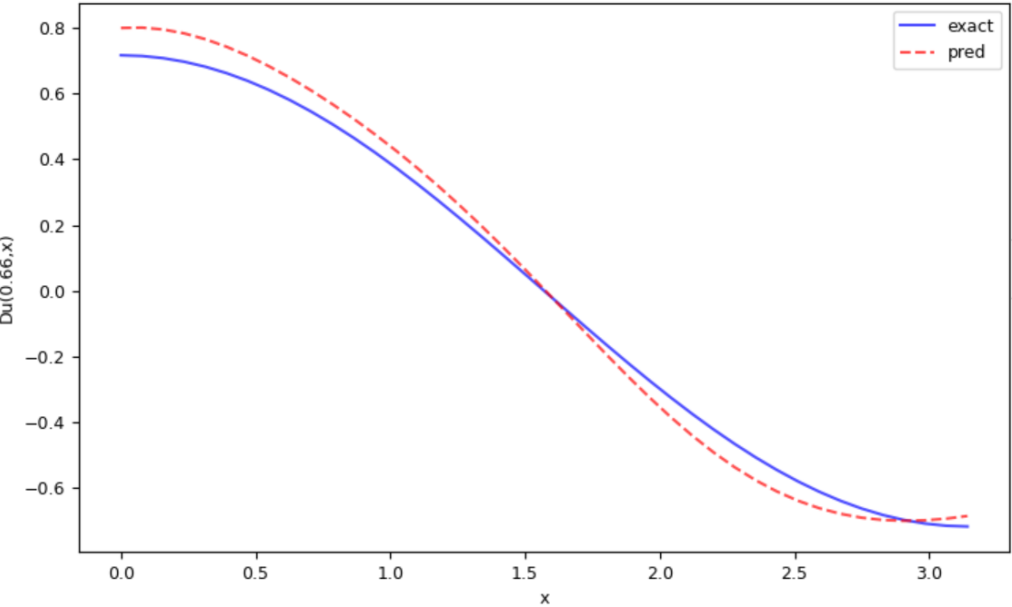}
        \subcaption{$D_{x}u(0.66,x)$} 
    \end{minipage}
    \hfill
    \begin{minipage}{0.23\linewidth}
        \centering
        \includegraphics[width=\linewidth]{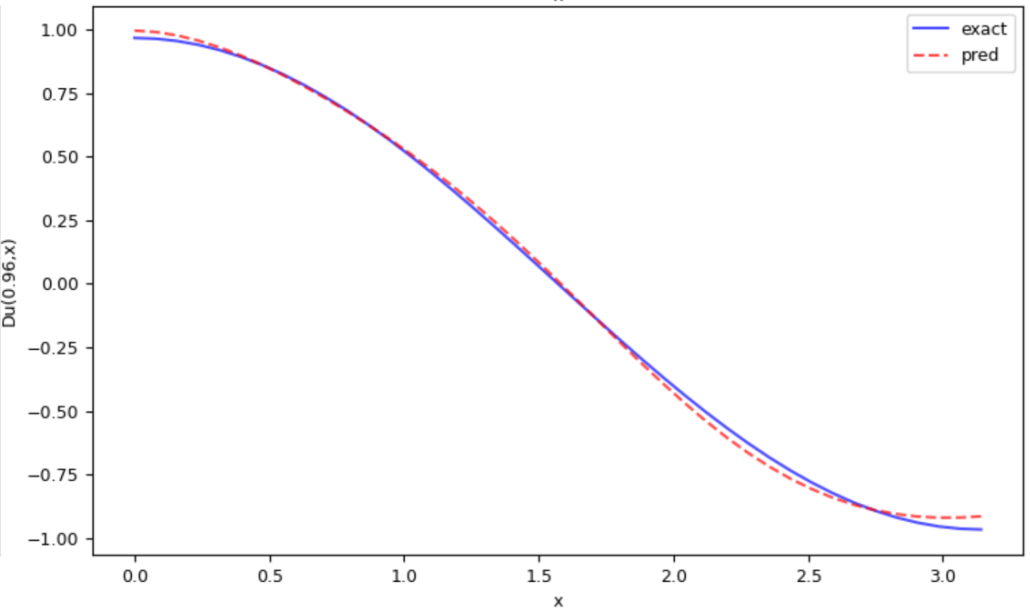}
        \subcaption{$D_{x}u(0.96,x)$} 
    \end{minipage}
    \caption{Estimates of $u(t,x)$ at $t=0,0.33,0.66,0.96$ (A,B,C,D) and estimates of $D_{x}u(t,x)$ at $t=0,0.33,0.66,0.96$ (E,F,G,H) under Bernoulli distribution Lévy measure}
    \label{Figure4.9}
\end{figure}


\subsection{High-dimensional problems}  In the second example, we consider a high-dimensional partial integro-differential equations (PIDEs) problem, {based on Equations (3.9) in \cite{Lu2024}}:
\begin{equation}\label{4.3}
\begin{cases}
\begin{aligned}
&\frac{\partial u}{\partial t}(t,x) + \frac{\epsilon}{2}x \cdot \nabla u(t,x) + \frac{1}{2}\mathrm{Tr}(\theta^2\mathrm{H}(u))-\lambda\mu^2 - \theta^2 \\
&\quad + \int_{E} \left( u(t,x+e) - u(t,x) - e \cdot \nabla u(t,x) \right) \lambda(\mathrm{d}e) = 0 ,
\end{aligned}\\
\begin{aligned}
u(T,x) = \frac{1}{d}\|x\|^2.\end{aligned}
\end{cases}
\end{equation}
The corresponding FBSDEJ is
\begin{equation}\label{4.4}
\begin{cases}
\begin{aligned}
\mathrm{d}X_{t}=\theta \mathrm{d}W_{t}+\int_{E}e \widetilde{\mu}(\mathrm{d}t,\mathrm{d}e), \end{aligned}
\\ \begin{aligned}
    -\mathrm{d}Y_{t}=-(\lambda \mu^{2}+\theta^{2})\mathrm{d}t-Z_{t}^{T}\mathrm{d}W_{t}-\int_{E}U_{t}(e)\widetilde{\mu}(\mathrm{d}t,\mathrm{d}e), \end{aligned}
\end{cases}
\end{equation}
with $\theta = 0.3$, $x \in \mathbb{R}^d$ and Poisson intensity $\lambda = 0.3$. Different from [75], to highlight the effectiveness of our method in handling high-dimensional problems, we employ a constant jump form $(0.1, 0.1, \cdots, 0.1)$, which means that the jump size at each coordinate is always $\mu=0.1$. The equation possesses an exact solution of $u(t, x) = \frac{1}{d} \|x\|^2$. The training process utilizes $M = 1000$ trajectories, and the time $[0, 1]$ is divided into $N = 60$ intervals. Furthermore, the number of neurons in the linear layers of each block is adjusted to $d + 10$, to accommodate the varying input dimensions $d$. The $L^{1}$-relative errors of $Y_0=u(0,X_0)=u(0,1,\cdots,1)$ is shown in Table 2. It is observed from Table 2 that the $L^{1}$-relative errors of \( u(0,X_0) \) obtained by the DFBDP algorithm are uniformly small for different dimensions $d$ and do not depend on the dimension $d$. This illustrates that the DFBDP algorithm can effectively solve high-dimensional semilinear PIDEs problems.

\begin{table}[h!]
\centering
\caption{The $L^{1}$-relative error of \( u(0,X_0) \) for different dimensions \(d\). }
\label{tab:4.2}
\begin{tabular}{cccccc}
\toprule
Dimension & 2 & 4 & 6 & 8 & 10 \\
\midrule
\( Y_0 \) relative error & 0.754\% & 1.251\% & 1.625\% & 2.001\% & 2.344\% \\

\midrule
Dimension & 20 & 30 & 40 & 50  \\
\midrule
\( Y_0 \) relative error & 1.503\% & 1.212\% & 2.435\% & 2.547\%  \\

\bottomrule
\end{tabular}
\end{table}


We observe that the coefficients of the second-order mixed partial derivatives ($\frac{\partial ^{2}u}{\partial x_i \partial x_j},~~i\neq j$) are all zero in \eqref{4.3}. For high-dimensional problems, it is also necessary to consider the case where they are not all zero. Then we consider the following high-dimensional PIDEs with the coupled diffusion term {as Equations (3.10) in \cite{Lu2024}}, but we set the drift term to be identically zero:

\begin{equation}\label{4.5}
    \begin{cases}
\begin{aligned}
&\frac{\partial u}{\partial t}+\frac{1}{2}\mathrm{Tr}(\sigma\sigma^{T}\mathrm{H}(u))-\lambda \mu^{2}-\frac{2d-1}{d}\theta^{2}\\
&+\int_{E}\left(u(t,x+e)-u(t,x)-e\nabla_{x}u(t,x)\right)\lambda(\mathrm{d}e)=0,
\end{aligned}\\
\begin{aligned}
    u(T,x)=\frac{1}{d}||x||^{2},
\end{aligned}
\end{cases}
\end{equation}
where
\begin{equation*}
\mathrm{Tr}(\sigma\sigma H(u))=\theta^{2}\left(\frac{\partial ^{2}u}{\partial x_{1}^2}+2\frac{\partial ^{2}u}{\partial x_{1}\partial x_{2}}+2\frac{\partial ^{2}u}{\partial x_{2}^2}+2\frac{\partial ^{2}u}{\partial x_{2}\partial x_{3}}+2\frac{\partial ^{2}u}{\partial x_{3}^2}+\cdots+2\frac{\partial ^{2}u}{\partial x_{d-1}\partial x_{d}}+2\frac{\partial ^{2}u}{\partial x_{d}^2}\right).
\end{equation*}

The corresponding FBSDEJ reads
\begin{equation}\label{4.6}
\begin{cases}
\begin{aligned}
\mathrm{d}X_{t}=\sigma \mathrm{d}W_{t}+\int_{E}e \widetilde{\mu}(\mathrm{d}t,\mathrm{d}e), \end{aligned}
\\ \begin{aligned}
    -\mathrm{d}Y_{t}=-\left(\lambda \mu^{2}+\frac{2d-1}{d}\theta^{2}\right)\mathrm{d}t-Z_{t}^{T}\mathrm{d}W_{t}-\int_{E}U_{t}(e)\widetilde{\mu}(\mathrm{d}t,\mathrm{d}e).
\end{aligned}
\end{cases}
\end{equation}
Here, we set $\theta=0.2$ and Poisson intensity $\lambda=0.3$, and the diffusion term is given by
\begin{equation*}
 \sigma = \theta
\begin{pmatrix}
1 & 0 & 0 & 0 & \cdots & 0 \\
1 & 1 & 0 & 0 & \cdots & 0 \\
0 & 1 & 1 & 0 & \cdots & 0 \\
0 & 0 & 1 & 1 & \cdots & 0 \\
\vdots & \vdots & \vdots & \vdots & \ddots & \vdots \\
0 & 0 & 0 & \cdots & 1 & 1
\end{pmatrix}.
\end{equation*}
The exact solution is $u(t,x)=\frac{1}{d}||x||^{2}$. In this numerical test, we use the same setups
as in the previous example, including the selection of the jump form, the discretization
of the time interval, the number of sample trajectories. We test the cases for dimensions $d = 5$ and $d=10$. The $L^{1}$-relative errors of $Y_0=u(0,X_0)$ is $0.009767$ and $0.023211$, which demonstrate that the DFBDP algorithm can also achieve the ideal results for semilinear high-dimensional PIDEs under the cases where the coefficients of the second-order mixed partial derivatives are not all zero.

\section{Conclusions}
In this work, we propose a new deep learning algorithm for solving high-dimensional { semilinear parabolic partial integro-differential equations (PIDEs)} and corresponding forward-backward stochastic differential equations with jumps (FBSDEJs). This novel algorithm can be viewed as an extension and generalization of the DBDP2 scheme and a dynamic programming version of the forwardbackward algorithm proposed recently for high-dimensional {semilinear PIDEs}, respectively. { Different from the DBDP2 scheme for semilinear PDEs, our algorithm approximate simultaneously the solution and the integral kernel by independent neural networks, while the gradient of the solution is approximated by numerical differential techniques.} The related error estimates for the integral kernel approximation play key roles in deriving error estimates for the novel algorithm. {Given the universal approximation capability of neural networks, we analyze the convergence and consistency of the DFBDP scheme and provide error estimates.}

Numerical experiments confirm our theoretical results and verify the effectiveness of the proposed algorithm. Moreover, this novel algorithm consistently delivers good results for PIDEs and corresponding FBSDEJs with different Lévy measures.

\section*{Code availability} https://github.com/yezaijun/DFBDP.git


\end{document}